\documentclass[reqno]{amsart}% {article}

\usepackage{amssymb}
\usepackage{amsmath}
\usepackage{amsthm}
\usepackage{todonotes}
\usepackage{soul}
\usepackage{cancel}

\usepackage{float}

\usepackage{comment}

\usepackage{stmaryrd}
\usepackage{romanbar}

\usepackage{color}

\newtheorem{thm}{Theorem}
\newtheorem{prop}{Proposition}

\newtheorem{cor}{Corollary}

\newcommand{\ve}{\mathbf{e}}
\newcommand{\vf}{\mathbf{f}}

\newcommand{\vn}{\mathbf{n}}
\newcommand{\vu}{\mathbf{u}}
\newcommand{\vv}{\mathbf{v}}
\newcommand{\vw}{\mathbf{w}}
\newcommand{\vx}{\mathbf{x}}

\newcommand{\vz}{\mathbf{z}}

\newcommand{\vvarphi}{\boldsymbol{\varphi}}

\newcommand{\veps}{\boldsymbol{\varepsilon}}

\newcommand{\vel}{\boldsymbol{\ell}}

\newcommand{\vI}{\boldsymbol{I}}

\newcommand{\vT}{\boldsymbol{T}}
\newcommand{\vU}{\boldsymbol{U}}
\newcommand{\vV}{\boldsymbol{V}}

\newcommand{\vX}{\boldsymbol{X}}

\newcommand{\avrg}[1]{{\left\{\kern-0.5ex\left\{ #1 \right\}\kern-0.5ex\right\}}} % average
\newcommand{\tr}{\mbox{tr}} % trace
\newcommand{\di}{\mathop{\rm div}\nolimits} % divergence operator
\newcommand{\cC}{\mathcal{C}}
\newcommand{\cD}{\mathcal{D}}

\newcommand{\cK}{\mathcal{K}}

\newcommand{\cP}{\mathcal{P}}

\newcommand{\pdd}{\partial}
\newcommand{\Om}{\Omega}
\newcommand{\bOm}{\pdd \Om}
\newcommand{\nd} {\text{d}}

\begin{document}

\title[Error analysis for the approximation of a flow in a porous media]{Error analysis for the approximation of a flow in deformable porous media with nonlinear strain-stress relation}

\author{Andrea Bonito}
\address{(Andrea Bonito) Texas A\&M University, Department of Mathematics, College Station, TX 77843, USA}
\email{bonito@tamu.edu}

\author{Vivette Girault}
\address{(Vivette Girault) Sorbonne-Universit\'e, CNRS, Universit\'e de Paris, Laboratoire Jacques-Louis Lions (LJLL), F-75005 Paris, France}
\email{vivette.girault@sorbonne-universite.fr}

\author{Diane Guignard}
\address{(Diane Guignard) University of Ottawa, Department of Mathematics and Statistics, 150 Louis-Pasteur Pvt, Ottawa, ON, Canada K1N 6N5}
\email{dguignar@uottawa.ca}

\subjclass{65M60, 65M22, 76S05, 35Q35}

\keywords{Porous media; Small deformation; Nonlinear stress-strain relation; Finite element discretization; Uniform stability; A priori analysis}

\begin{abstract}
We study a model describing the slow flow of a fluid through a deformable, porous, elastic solid undergoing small deformations. The stress–strain relationship of the solid incorporates nonlinear effects, formulated as a perturbation of the classical linear elasticity.
To approximate the coupled system, we introduce a discrete scheme based on a first-order semi-implicit time integration scheme combined with a standard finite element spatial discretization. We establish the existence and uniqueness of the discrete solution and derive a priori convergence estimates under the assumption that the nonlinear perturbations remain sufficiently small.
Finally, we demonstrate the efficiency of the proposed scheme through numerical experiments that also highlight the nonlinear phenomena captured by the model.
\end{abstract}

\maketitle

%--------------------------------
\section{Introduction}
 \label{sec:intro}

The most popular models to describe flow through porous media are the celebrated Darcy's equations , which are valid for rigid media and when the fluid flow is driven by diffusion.
Several generalizations of Darcy's equations have been proposed. We mention the work of Brinkman \cite{brinkman1949calculation,brinkman1949permeability} accounting for the diffusion within the fluid and the work of Fick \cite{fick1855ueber} where the diffusivity and/or drag coefficients depend on the deformation.  
These are special cases that can be derived from the theory of mixture \cite{truesdell1957sulle,bowen1976theory,atkin1976continuum,samohyl1987thermodynamics,rajagopal1995mechanics}. In \cite{rajagopal2007hierarchy}, Rajagopal derived a hierarchy of models for the flow of a fluid through a porous medium. Assuming small deformations, i.e., small displacement gradients and that the flow is slow, the balance of linear momentum for the fluid is approximated as in the derivation of the Brinkman's equations. The fluid velocity $\vv$ and the fluid pressure $p$ are related to the porous media deformation $\vu$ by the following relation 
% truesdell1957sulle2,
\begin{equation}
\label{eq:flow}
\alpha (\vv-\pdd_t \vu) - \nu \Delta\, \vv   + \nabla\, p = -\varrho \pdd_t \vv ,\qquad \di\,\vv  = 0.
\end{equation}
Here, $\alpha>0$ is the drag coefficient, $\varrho$ is the density of the fluid, and $\nu$ is the dynamic viscosity of the fluid.
Neglecting the solid acceleration, the balance of linear momentum for the solid relates the solid stress tensor $\vT$ with the same drag term present in the fluid equation
\begin{equation}
\label{eq:bal}
0 = \di(\vT) + \alpha( \vv - \pdd_t \vu).
\end{equation}

Regarding the elastic response of the solid, we consider a special case of the general model introduced in \cite{rajagopal2003implicit,rajagopal2007elasticity,rajagopal2011conspectus,rajagopal2021implicit}.
In particular, we are interested in the constitutive relations used in \cite{arumugam2024new} to
describe the response of bones
\begin{equation}
\label{eq:eps=T}
\veps= E_1 (1+ \lambda_1\tr(\veps))\vT + E_2 (1+ \lambda_2\tr(\veps))\tr(\vT) \vI, 
\end{equation}
where 
$$
\veps:=\veps(\vu):= \frac 1 2 \left(\nabla \vu + (\nabla \vu)^t\right),
$$
the material parameters $E_1$ and $E_2$ satisfy
\begin{equation}
\label{eq:signE}
E_1 >0, \quad E_2 <0, \qquad E_1-d|E_2|>0,
\end{equation}
with $d$ the dimension of the physical domain, and $\lambda_1$, $\lambda_2$ are model parameters. 
When $\lambda_1=\lambda_2=0$, $E_1$ and $E_2$ relates to Lam\'e parameters $\mu$ and $\lambda$ according to
\begin{equation} \label{eqn:Ei_Lame}
    E_1=\frac{1}{2\mu} \qquad \text{and} \qquad E_2=-\frac{\lambda E_1^2}{d\lambda E_1+1}=-\frac{\lambda}{2\mu(d\lambda+2\mu)},
\end{equation}
which leads to the classical linearized elastic model
$$\vT=2\mu\veps(\vu)+\lambda\text{tr}(\veps(\vu))I.$$
Notice the sign assumptions \eqref{eq:signE} are satisfied provided $\mu>0$ and $\lambda\ge 0$.

It is worth mentioning that \eqref{eq:eps=T} follows from the linearization of the constitutive relation for the strain, which is justified under the small deformation assumption.  However, the linearized strain remains expressed as a nonlinear function of the stress. Unlike the classical linearized elastic constitutive relation, it was observed in \cite{arumugam2024new} that the response to tension is noticeably different than its response to compression in uniaxial problems, and the moduli of the material depend on the density of the material. These effects are amplified when $\lambda_1$ and $\lambda_2$ increase and what differentiates the current work from \cite{GGHR1}, where an analysis of the model is provided after  \eqref{eq:eps=T} is linearized (formally $\lambda_1=\lambda_2=0$).  

The system of equations \eqref{eq:flow}--\eqref{eq:eps=T} are defined on a  bounded connected open set $\Omega \subset \mathbb{R}^d$, $d=2,3$, with Lipschitz continuous boundary $\pdd \Omega$. They are thus complemented by boundary and initial conditions. To simplify the theoretical analysis, we shall only study the case when both the solid and the fluid are subjected to homogeneous Dirichlet boundary conditions:
\begin{equation}
    \label{eq:Dir0}
\vu|_{\bOm} ={\bf 0}, \quad \vv|_{\bOm} ={\bf 0}.     
\end{equation}
The initial conditions are
\begin{equation}
    \label{eq:init}
\vu(0) = \vu_{0}, \quad  \vv(0) =  \vv_{0},  
\end{equation}
with $\vu_{0}$ and $\vv_{0}$ satisfying the boundary conditions \eqref{eq:Dir0} and $\di\,\vv_{0} =0$.

In the linearized case $\lambda_1=\lambda_2=0$, the existence of a solution is studied in \cite{GGHR1}. However, due to the lack of compactness and monotonocity, the system of equations \eqref{eq:flow}-\eqref{eq:eps=T} together with the boundary and initial conditions \eqref{eq:Dir0}, \eqref{eq:init} presents a great mathematical challenge when $|\lambda_1|+|\lambda_2|\not = 0$. This work does not address the  existence and uniqueness of a solution to the nonlinear problem. Rather, we  assume that  $\lambda_1$ and $\lambda_2$ are small relative to $\tr(\veps)^{-1}$ and that the nonlinear problem has a unique solution that satisfies the regularity condition specified in \eqref{additional_reg} below. In turn, we propose and analyze a discrete approximation of \eqref{eq:flow}--\eqref{eq:eps=T}.

In Section~\ref{subsec:Nota}, we make the assumption on the exact solution more precise by describing a variational formulation of \eqref{eq:flow}--\eqref{eq:eps=T} along with the associated functional spaces. The proposed discretizations are detailed in Section~\ref{sec:DiscreteGen}, where we start by noting that for small deformations, the nonlinear relation \eqref{eq:eps=T} is equivalent to the one where the terms $\di \vu$ are truncated. We then propose a standard finite element discretization of the resulting system coupled with a first order time integration scheme. The existence and uniqueness of the discrete solution is established in Proposition~\ref{pro:existuhn}, while its stability is the subject of Theorem~\ref{thm:bddvh} (for the approximate fluid velocity), Corollary~\ref{cor:bdduhn} (for the approximate solid displacement), and Proposition~\ref{pro:bddpres} (for the discrete pressure). The truncation operation is critical to derive these results as well as to obtain the a priori error estimates guaranteed by Theorem~\ref{thm:velerr} in Section~\ref{sec:Errorbouds}. While the stability is described for a general finite element method, the a priori error analysis is performed for a first order method only to simplify the discussion.  In Section~\ref{sec:num_res}, we present numerical illustrations of the impact of nonlinearities in the model.

\subsection*{Dedication and Acknowledgments}

This work is dedicated to the memory of our dear friend and colleague Kumbakonam Rajagopal, who initiated this work with the co-authors.  We also would like to thank Dr. Shriram Srinivasan for the precious discussions about the model as well as for commenting on the numerical experiments. 

This research was supported by the NSF Grant DMS-2409807 (AB) and the NSERC Discovery Grant RGPIN-2021-04311 (DG).

%--------------------------------
\section{Preliminaries}
\label{subsec:Nota}

Let $T>0$ be a final time and for $0<t\le T$, define the space time domain $Q_t:=\Om \times ]0,t[$
and denote by $\partial_t$ the derivative with respect to $t\in ]0,T[$. In the sequel, if there is no ambiguity, the differential notation in integrals will be omitted.

\subsection{Standard Functional Spaces}

We use $\mathcal D$ to represent $\Omega$ or $\partial \Omega$. 
The scalar product of $L^2(\mathcal D)$ is denoted by $(\cdot,\cdot)_{\mathcal D}$:
$$\forall\, f,g \in L^2(\mathcal D),\quad (f,g)_{\mathcal D} := \int_{\mathcal D} f(\vx) g(\vx) \nd\vx,$$
and the index $\mathcal D$ is omitted when the integration domain is clear from the context. For any positive real number $s$, we denote by $H^{s}(\mathcal D)$ the classical Hilbert space with norm $\|.\|_{H^s(\mathcal D)}$. The reader is referred to \cite{adams2003sobolev,grisvard2011elliptic,leoni2017first} for more details.
Furthermore, for vector-valued functions
$\vv \in H^d$, $d\geq 1$, with $H$ a Hilbert space on $\Omega$, we recall that $H^d$ is a Hilbert space equipped with the norm
$$
\forall\, \vw \in H^d, \qquad \| \vw \|_{H}:=\| \vw \|_{H^d}:= \| |\vw|\|_H,
$$
where $|\cdot|$ is the Frobenius norm. Similarly, we set 
$$
\forall\, \vw \in H^{d\times d}, \qquad \| \vw \|_{H}:=\| \vw \|_{H^{d\times d}}:= \| |\vw|\|_H.
$$

The importance of fractional order spaces stems from the following trace property: If $v$ belongs to $H^s(\Om)$ for some $s \in ]\frac{1}{2},1]$,
 then its trace on $\pdd \Om$ belongs to $H^{s-\frac{1}{2}}(\pdd\Om)$ and there exists a constant  $C_s$  such that
\begin{equation}
\label{eq:trace.s}
\forall\, v \in H^s(\Om), \qquad \|v\|_{H^{s-\frac{1}{2}}(\pdd\Om)} \le  C_s  \|v\|_{H^s(\Om)}.
\end{equation}
In particular,  $H^{\frac{1}{2}}(\pdd\Om)$ is the trace space of $H^1(\Om)$ and we denote by $H^{-\frac{1}{2}}(\pdd\Om)$ the dual space of $H^{\frac{1}{2}}(\pdd\Om)$.

Let $H^1_0(\Om)$ be the set of functions in $H^1(\Om)$ that vanish on $\pdd\Om$.
We denote by $H^{-1}(\Omega)$ its dual and recall that the following Poincar\'e and Korn inequalities, written for all functions $\vw$ in $H^1_0(\Om)^d$, hold
\begin{equation}
\|\vw\|_{L^2(\Om)}\le  \cP  |\vw|_{H^1(\Om)} ,
\label{eq:Poincare}
\end{equation}
\begin{equation}
|\vw|_{H^1(\Om)}\le  \cK  \|\veps(\vw)\|_{L^2(\Om)} ,
\label{eq:Korn}
\end{equation}
where $\cP$ and $\cK $ are positive constants depending only on $\Omega$. 

As usual, for handling time-dependent problems, it is convenient to consider measurable functions defined on a time interval $]a,b[$ with values in a Banach space, say $(X,\|.\|_X)$. More precisely, for any number $r$, $1\le r\le \infty$, we define the Banach space
$$
L^r(a,b;X) := \{f \mbox{ measurable in } ]a,b[\,;
 \int_a^b \|f(t)\|_X^r\nd t <\infty\},
$$ 
equipped with the norm
$$\|f\|_{L^r(a,b;X)} := \left(\int_a^b \|f(t)\|_X^r\,\nd t\right)^{\frac{1}{r}},$$
with the usual modification if $r = \infty$. It is a Hilbert space if $X$ is a Hilbert space. In particular,
$$L^2(a,b;L^2(\Om)) = L^2(\Om \times ]a,b[).
$$

We shall also need the space-time functions that are continuous in time
$$
\cC^0(a,b;X) := \{f \in L^\infty(a,b;X) \,; \ t \mapsto \| f(t) \|_X \quad \mbox{is continuous on }[a,b]\}.
$$ 

\subsection{Variational Formulation}

We observe that from the point of view of mathematics, $\vT$ can be written explicitly in terms of $\veps(\vu)$. Indeed, by taking the trace of both sides of \eqref{eq:eps=T}, using that $\tr(\veps(\vu))=\di\,\vu$, and collecting terms we obtain
\begin{equation}
\label{eq:tr(1)}
\di\, \vu = \tr(\vT) \Big(E_1 (1+ \lambda_1 \di\, \vu) - 3|E_2| (1+ \lambda_2 \di\, \vu)\Big).
\end{equation}
To simplify, we introduce the notation
\begin{equation}
\label{eq:F(us)}
F(f) =E_1 (1+ \lambda_1f) - d|E_2| (1+ \lambda_2f)
\end{equation}
so that
\begin{equation}
\label{eq:tr(Ts)}
\tr(\vT) = \frac{1}{F(\di\, \vu) }\di\,\vu,
\end{equation}
and substituting into \eqref{eq:eps=T}, we obtain the equivalent relation
\begin{equation}
\label{eq:(Ts)}
\vT = \frac{1}{E_1 (1+ \lambda_1 \di\, \vu) } \Big( \veps(\vu) + |E_2| (1+ \lambda_2 \di\, \vu) \frac{\di\,\vu}{F(\di\,\vu)} \vI\Big).
\end{equation}

The duality product of \eqref{eq:bal} with a test function $\vz$ in $H^1_0(\Om)^d$ together with the expression \eqref{eq:(Ts)} for $\vT$ yield 
\begin{equation}
\label{eq:varbal2}
\begin{split}
\int_\Om \frac{1}{E_1 (1+ \lambda_1 \di\, \vu)} \veps( \vu): \veps( \vz) &+  \int_\Om \frac{|E_2| (1+ \lambda_2 \di\, \vu)}{E_1 (1+ \lambda_1\di\, \vu)}\frac{\di\,\vu}{F(\di\,\vu)} \di\,\vz \\
& 
+ \alpha \langle\pdd_t \vu,\vz \rangle
 = \alpha \int_\Om  \vv \cdot \vz.
\end{split}
\end{equation}

For each $\vv$ given in $L^2(Q_T)^d$, when integrated with respect to time over $]0,t[$, for $t \in ]0,T]$, and when $\lambda_1=\lambda_2=0$, \eqref{eq:varbal2} is a standard linear elasticity system that is well defined for test functions in $L^2(0,T;H^1_0(\Om)^d)$. Therefore, it is reasonable to seek $\vu \in  W(0,T)$, where
\begin{equation}
\label{eq:W0T}
W(0,T) := \{ \vw \in L^2(0,T; H^1_0(\Om)^d); \quad \pdd_t \vw \in L^2(0,T; H^{-1}(\Om)^d)\}
\end{equation}
is a Hilbert space equipped with the graph norm 
$$
\|\vw\|_{W} := \big(\|\vw\|^2_{L^2(0,T; H^1_0(\Om)^d)} + \|\pdd_t \vw\|^2_{L^2(0,T; H^{-1}(\Om)^d)}\big)^{\frac{1}{2}}.
$$
We note that the functions in $W(0,T)$ are continuous in time with values in $L^2(\Om)^d$, i.e., for $\vw \in W(0,T)$, there is a function in the same equivalence class (still denoted $\vv$) such that the mapping $t \mapsto \| \vw(t)\|_{L^2(\Om)}$ is continuous on $[0,T]$.
We denote by $C_L$ the corresponding embedding constant
\begin{equation}
    \label{eq:CL}
\forall\, \vw \in W(0,T),\quad  \sup_{t \in [0,T]}\|\vw(t)\|_{L^2(\Om)} \le C_L \|\vw\|_W.
\end{equation}

For the fluid velocity-pressure, we note that for a given $\pdd_t \vu \in L^2(0,T;H^{-1}(\Omega)^d)$, \eqref{eq:flow} is a Stokes-like system. 
We consider the standard functional space
\begin{equation}
\label{eq:V}
V := \{ \vw \in H^1_0(\Omega)^d\,;\, \di\, \vw = 0\quad \text{ in }\quad \Omega\},
\end{equation}
and its dual space $V^\prime$ equipped with the dual norm
\begin{equation}
\label{eq:V'}
\|\vel\|_{V^\prime} := \sup_{\vw \in V}\frac{\langle \vel, \vw \rangle}{|\vw|_{H^1(\Om)}}.
\end{equation}
We then set 
\begin{equation}
\label{eq:WV0T}
W_V(0,T) := \{ \vw \in L^2(0,T; V); \quad \pdd_t \vw \in L^2(0,T; V^\prime)\},
\end{equation}
which is again a Hilbert space equipped with the graph norm
$\|\cdot\|_{W_V}$. 
As for the displacement space $W(0,T)$, functions in $W_V(0,T)$ are continuous in time and we denote by $C_V$ the embedding constant, i.e.,
\begin{equation}
    \label{eq:CWV}
\forall\, \vw \in W_V(0,T),\quad  \sup_{t \in [0,T]}\|\vw(t)\|_{L^2(\Om)} \le C_V \|\vw\|_{W_V}.
\end{equation}
We shall also often need the functional space $L^2(0,T; H^{-1}(\Om)^d)$ and we denote its norm by
$$\|\vel\|_{*} := \|\vel\|_{L^2(0,T; H^{-1}(\Om)^d)}.$$ 

The pressure is determined up to a constant, which led to consider the subspace of $L^2(\Om)$ with vanishing mean value
\begin{equation}
\label{eq:L20}
L^2_0(\Om) := \{ f \in L^2(\Omega)\,;\,\int_\Om f (\vx)\,\nd\vx  = 0\}.
\end{equation}
To account for the dependency in time, we define 
$$
W^{1,1}(0,T;L^2_0(\Om)) := \{f \in L^1(0,T;L^2_0(\Om))\,;\, \partial_t\,f \in L^1(0,T;L^2_0(\Om))\} \subset \cC^0(0,T;L^2_0(\Omega)),
$$
$W^{1,1}_0(0,T;L^2_0(\Om))$ its restriction to functions vanishing at $t=0$ and $t=T$, 
and let $W^{-1,\infty}(0,T;L^2_0(\Om))$ its dual space where the pressure will be sought.

Now, we take the duality product of the first part of
\eqref{eq:flow} with a test function $\vw$ in $H^1_0(\Om)^d$ and seek $\vv(t)$ in $V$ for a.e. $t\in ]0,T]$ such that 
\begin{equation}
\label{eq:varbal3}
\langle \varrho \pdd_t \vv + \nabla\, p, \vw\rangle + \nu \int_\Om \nabla\, \vv : \nabla\,
\vw + \alpha \int_\Om \vv \cdot \vw
= \alpha \langle\pdd_t \vu,\vw \rangle.  
\end{equation}
It is well-known that the velocity  $\vv$ belongs to $W_V(0,T)$, $p$ to $W^{-1, \infty}(0,T;L^2_0(\Om))$ \cite{temam1977navier,boyer2012mathematical} and  the sum $\pdd_t \vv + \frac 1 {\varrho}\nabla\, p$ belongs to $L^2(0,T;H^{-1}(\Om)^d)$ provided $\pdd_t \vu \in L^2(0,T;H^{-1}(\Om)^d)$. This suggests considering
\begin{equation}
    \label{bbX}
\mathbb{X} := \{(\vw,q) \in W_V(0,T) \times  W^{-1, \infty}(0,T;L^2_0(\Om))\,;\, \pdd_t \vw + \frac{1}{\varrho} \nabla\,q \in L^2(0,T;H^{-1}(\Om)^d)\}
\end{equation}
as the solution space for the fluid velocity so that when integrated with respect to time over $]0,t[$, for $t \in ]0,T]$,  \eqref{eq:varbal3} is  well defined for test functions $\vw$ in $L^2(0,T;H^1_0(\Om)^d)$.
The space $\mathbb X$ 
equipped with the norm
\begin{equation}
\label{normbbV}
\|(\vw,q)\|_{\mathbb{X}} = \| \vw\|_{W_V(0,T)} + \|q\|_{W^{-1, \infty}(0,T;L^2_0(\Om))} + \|\pdd_t \vw + \frac{1}{\varrho} \nabla\,q \|_*
\end{equation}
is a Banach space.

Gathering the above consideration, the variational formulation consists of finding $\big(\vu, \vv,p\big) \in W(0,T) \times \mathbb X$ satisfying
\begin{equation}\label{variational}
\eqref{eq:varbal2} \quad \forall\, \vz\in L^2(0,T;H^1_0(\Om)^d), \qquad  \eqref{eq:varbal3} \quad \forall\, \vw \in L^2(0,T;H^1_0(\Om)^d),
\end{equation}
and the initial conditions $ \vu(0)=\vu_{0}$ and $\vv(0)=\vv_{0}$.

We do not discuss the well-posedness of the variational formulation, but mention that a major obstacle in guaranteeing the well-posedness of the variational formulation for any data is the lack of exploitable compactness  in \eqref{eq:varbal2} and \eqref{eq:varbal3} and/or that the nonlinear relation \eqref{eq:(Ts)} is not monotone in $\veps(\vu)$.
However, when $\lambda_1$ and $\lambda_2$ are small, the system considered can be viewed as a small perturbation of
the linear model (i.e., with $\lambda_1=\lambda_2=0$) which is itself
well-posed.

In this work, we assume that for some $\bar \lambda>0$, the variational formulation has a solution for all $|\lambda_1|, |\lambda_2|\leq \bar \lambda$ and that
\begin{equation}\label{additional_reg}
\begin{split}
&\vv \in L^\infty(0,T;H^2(\Omega)^d), \qquad \partial_t \vv \in   L^2(0,T;H^1(\Omega)^d), \\
&\vu \in L^\infty(0,T;H^2(\Omega)^d), \qquad \partial_t \vu \in  L^2(0,T;H^1(\Omega)^d), \qquad \partial^2_{tt} \vu \in L^2(0,T;H^{-1}(\Omega)^d), \\
& p \in L^{\infty}(0,T;H^1(\Omega)),
\end{split}
\end{equation}
(with implicit dependence on $\lambda_1$ and $\lambda_2$).
In addition, we suppose that there is a constant $D$ such that
\begin{equation}\label{e:small_trace}
\| \di\, \vu \|_{L^\infty(0,T;L^\infty(\Omega)^d)} \leq D, \qquad \forall |\lambda_1|,|\lambda_2|\leq \bar \lambda.
\end{equation}
Observe that the above assumption fits the small strain assumption considered in this work thanks to $| \di\, \vu| \leq \sqrt d |\veps(\vu)|$.

%--------------------------------
\section{Discrete Scheme}
 \label{sec:DiscreteGen}

\subsection{Truncation}
\label{sec:trunc}

We start by noting that \eqref{e:small_trace} implies that the expression \eqref{eq:(Ts)}  for $\vT$ is equivalent to
\begin{equation}
\label{eq:(Ts_trunc)}
\vT = \frac{1}{E_1 (1+ \lambda_1 T_{\delta}\di\, \vu) } \Big( \veps(\vu) + |E_2| (1+ \lambda_2T_{\delta} \di\, \vu) \frac{\di\,\vu}{F(T_{\delta} \di\,\vu)} \vI\Big),    
\end{equation}
where $\delta \geq D$ is fixed and for $r>0$, $T_r$ is the truncation operator defined by

\begin{equation}
\label{e:tunc}
T_r f = f \quad \mbox{if}\ |f| \le r, \quad T_r f = r\,{\rm sign}(f) \quad \mbox{if}\ |f| >r.
\end{equation}
Note that there holds
$$T_{\delta} \di(\vv + \vw) - T_{\delta} \di\,\vv = \begin{cases}
\di\, \vw & \text{if} \ -\delta \le \di(\vv + \vw) \le \delta\ \text{and}\ -\delta \le \di\, \vv \le \delta,  \\
 0 & \text{if} \ \di(\vv + \vw) > \delta\ \text{and}\ \di\, \vv  > \delta,\\
 & \text{or}\ \di(\vv + \vw) < - \delta\ \text{and}\ \di\, \vv < -\delta.
  \end{cases}
$$
and thus
\begin{equation}
\label{eq:difTdelt}
 |T_{\delta} \di(\vv + \vw) - T_{\delta} \di\,\vv | \le | T_{2\delta} \di\,\vw| \qquad \forall\, \vv,\vw\in H^1(\Om)^d.
\end{equation}
With the modification \eqref{eq:(Ts_trunc)}, the variational formulation \eqref{eq:varbal2} becomes for $\vz \in H^1_0(\Omega)^d$
\begin{equation}
\label{eq:varbal2bis}
\begin{split}
\int_\Om \frac{1}{E_1 (1+ \lambda_1 T_{\delta}\di\, \vu)} \veps( \vu): \veps( \vz) &+  \int_\Om \frac{|E_2| (1+ \lambda_2 T_{\delta}\di\, \vu)}{E_1 (1+ \lambda_1T_{\delta}\di\,\vu})\frac{\di\,\vu}{F(T_{\delta}\di\,\vu)} \di\,\vz \\
& 
+ \alpha \langle\pdd_t \vu,\vz \rangle
 = \alpha \int_\Om  \vv \cdot \vz.
\end{split}
\end{equation}

Although the truncation operator does not have any effect at the continuous level, it will be instrumental to derive the stability of the discrete scheme. In particular, we now derive uniform estimates for the coefficients appearing in the variational equation \eqref{eq:varbal2bis}:
$$
\frac{1}{1+ \lambda_1T_{\delta} \di\ \vw}, \qquad \frac{1}{F(T_{\delta} \di\ \vw)}, \qquad \textrm{and}\quad 1+ \lambda_2T_{\delta} \di\, \vw
$$
for some $\vw \in H^1_0(\Omega)^d$.
In order to help with the presentation, we introduce a few constants.
Recall that we are interested in $\lambda_1$, $\lambda_2$ such that $|\lambda_1|, |\lambda_2| \leq \bar \lambda$ so that \eqref{e:small_trace} holds.
We set $\gamma_1 := \frac{1}{1+ |\lambda_1| \delta}>0$ and
$\gamma_2 := \frac{1}{1- |\lambda_1| \delta}$ so that for $|\lambda_1|$ sufficiently small, $1-|\lambda_1|\delta>0$ and thus
\begin{equation}\label{e:gam12}
\gamma_1 \le \frac{1}{1+ \lambda_1T_{\delta} \di\ \vw} \leq \gamma_2.
\end{equation}
We also set $\gamma_5 := \frac{1}{\gamma_3 + |\gamma_4| \delta}>0$ and $\gamma_6 := \frac{1}{\gamma_3- |\gamma_4| \delta}$, where $\gamma_3$ and $\gamma_4$ are defined by 
\begin{equation}\label{eq:g3_4}
\gamma_3:=E_1+dE_2 \qquad  \textrm{and} \qquad \gamma_4:=E_1\lambda_1+dE_2\lambda_2.
\end{equation}
With these definitions, we have $F(T_{\delta} \di \vw)= \gamma_3 + \gamma_4 T_{\delta} \di \vw$, so that for $|\lambda_1|$, $|\lambda_2|$ sufficiently small, we get $\gamma_3-|\gamma_4|\delta>0$ and thus
\begin{equation}
 \label{eq:gam56.1}
     \gamma_5 \le \frac{1}{F(T_{\delta} \di\, \vw)} \le \gamma_6.
 \end{equation}
Note that we have used here the assumption $\gamma_3>0$ and $E_2<0$, see \eqref{eq:signE}.  Finally, we set $\gamma_7 := 1- |\lambda_2| \delta$ and $\gamma_8 := 1+ |\lambda_2| \delta$ so that for $|\lambda_2|$ sufficiently small
\begin{equation}
 \label{eq:gam78.1}
     0<\gamma_7 \le 1+ \lambda_2T_{\delta} \di\, \vw \le \gamma_8.
 \end{equation}

Notice that the larger $\delta$ is, the smaller $|\lambda_1|$ and $|\lambda_2|$ are required to be.

\subsection{General finite element discretization}

From now on, we suppose that $\Om$ is a Lipchitz polytopal domain. Let $\vU_h$ be a finite element subspace of $H^1_0(\Om)^d$ designed to approximate the displacement, $\vX_h$ a finite element subspace of $H^1_0(\Om)^d$ designed to approximate the fluid's velocity and $Q_h$ a finite element subspace of $L^2_0(\Om)$ designed to approximate the fluid's pressure. We assume that $\vX_h$ and $Q_h$ satisfy a uniform inf-sup condition for the divergence, i.e., there is 
$\beta^*>0$ independent of $h$ such that
\begin{equation}\label{e:discrete_infsup}
\beta^*:= \inf_{q \in Q_h} \sup_{\vw \in \vX_h} \frac{\int_\Om q \di\,\vw}{\| q \|_{L^2(\Om)} \| \nabla \vw\|_{L^2(\Om)}}>0.
\end{equation}
 
As usual the divergence-free functions are approximated by the space $\vV_h$ defined by
$$\vV_h = \{\vv_h \in \vX_h\,;\, \int_\Om q_h \di\, \vv_h = 0, \forall\, q_h \in Q_h\}.$$
To avoid a multiplicity of notation, we use the same symbol $\Pi_h$ for an interpolation operator  from $H^1_0(\Om)^d$ into $\vU_h$, from $H^1_0(\Om)^d$ into $\vX_h$, and from $\vV$ into $\vV_h$. We suppose that $\Pi_h$ is stable in $H^1_0(\Om)^d$, i.e., there exists a positive constant $C$ independent of $h$ such that for all $\vu \in H^1_0(\Om)^d$
\begin{equation}
\label{eq:PihC}
 \|\nabla\,\Pi_h\vu\|_{L^2(\Om)} \le C \|\nabla\,\vu\|_{L^2(\Om)} \qquad   \textrm{and} \qquad  \|\vu-\Pi_h\vu\|_{L^2(\Om)} \le Ch\|\nabla\,\vu\|_{L^2(\Om)}.
\end{equation}

\subsection{Fully Discrete Scheme}
 
Let the number of time steps $N>1$ be chosen, set $\Delta\,t := \frac{T}{N}$ and $t_n:=n\Delta\,t$, $0\le n\le N$, the snapshots in time. Then, starting from
\begin{equation}
\label{eq:initial}
 \vu_h^0 = \Pi_h\vu_{0,s}, \quad \vv_h^0 = \Pi_h\vv_{0,f},
\end{equation}
the relations  \eqref{variational} with \eqref{eq:varbal2} replaced by \eqref{eq:varbal2bis}, are discretized as follows: given $\vu_h^{n-1} \in \vU_h$ and $\vv_h^{n-1} \in \vV_h$ find $\vu_h^{n} \in \vU_h$, $\vv_h^{n} \in \vV_h$, and $p_h^{n} \in Q_h$ satisfying for all $\vz_h \in \vU_h$,
\begin{equation}
\label{eq:displachn}
 \begin{split}
 &\frac{\alpha}{\Delta\,t}\int_\Om (\vu_h^n- \vu_h^{n-1})\cdot \vz_h + \frac{1}{E_1}\int_\Om \frac{1}{1+ \lambda_1T_{\delta} \di\, \vu_h^{n-1}} \veps( \vu_h^n): \veps( \vz_h) \\
 & + \frac{|E_2|}{E_1} \int_\Om \frac{ 1+ \lambda_2T_{\delta} \di\, \vu_h^{n-1}}{( 1+ \lambda_1T_{\delta} \di\, \vu_h^{n-1})}
\frac{1}{F(T_{\delta} \di \vu_h^{n-1})}
(\di\,\vu_h^n) (\di\,\vz_h)  \\
 & = \alpha \int_\Om  \vv_h^n \cdot \vz_h ,
 \end{split}
\end{equation}
and for all $\vw_h \in \vX_h$,
\begin{equation}
\label{eq:fluidhn}
 \begin{split} 
 &  \frac{\varrho}{\Delta\,t}\int_\Om (\vv_h^n- \vv_h^{n-1})\cdot \vw_h
 - \int_\Om p_h^n\di\,\vw_h + \nu \int_\Om \nabla\, \vv_h^n : \nabla\,
\vw_h  + \alpha \int_\Om \vv_h^n \cdot \vw_h \\
 & = \frac{\alpha}{\Delta\,t}\int_\Om (\vu_h^n- \vu_h^{n-1})\cdot\vw_h.
\end{split}
\end{equation}
At each step $n$, \eqref{eq:displachn}--\eqref{eq:fluidhn} is a coupled linear system in finite dimension.

%--------------------------------
\subsection{Existence and uniqueness of a discrete solution}
\label{subsec:exist.h}

For fixed $n\ge 1$, let us check that given $\vu_h^{n-1} \in \vU_h$ and $\vv_h^{n-1} \in \vV_h$, the system \eqref{eq:displachn}--\eqref{eq:fluidhn} defines $\vu_h^n$, $\vv_h^n$, and $p_h^n$. Let
\begin{equation} \label{eqn:B1B2}
    B_1 := \frac{1}{1+ \lambda_1T_{\delta} \di\, \vu_h^{n-1}}, \quad B_2 := \frac{ 1+ \lambda_2T_{\delta} \di\, \vu_h^{n-1}}{( 1+ \lambda_1T_{\delta} \di\, \vu_h^{n-1})}
\frac{1}{F(T_{\delta} \di\, \vu_h^{n-1})}.
\end{equation}
Notice that both constants are well defined and positive thanks to  \eqref{e:gam12}, \eqref{eq:gam56.1}, and  \eqref{eq:gam78.1} provided that $|\lambda_1|, |\lambda_2|$ are sufficiently small.

Then for this $n$,  \eqref{eq:displachn} reads
\begin{equation}
\label{eq:displachn2}
\begin{split}
    \frac{\alpha}{\Delta\,t}\int_\Om (\vu_h^n- \vu_h^{n-1})\cdot \vz_h + \frac{1}{E_1}\int_\Om B_1 \veps( \vu_h^n): &\veps( \vz_h) 
 + \frac{|E_2|}{E_1} \int_\Om B_2
(\di\,\vu_h^n) (\di\,\vz_h)\\
&
 = \alpha \int_\Om  \vv_h^n \cdot \vz_h.
 \end{split}
\end{equation}
Since $B_1$ and $B_2$ are known, \eqref{eq:fluidhn}--\eqref{eq:displachn2} is a linear system in $(\vu_h^n,\vv_h^n, p_h^n)$  and establishing the existence of a solution is equivalent to proving that if the data $\vu_h^{n-1}$ and $\vv_h^{n-1}$ are both zero then the only solution $(\vu_h^n,\vv_h^n, p_h^n)$ is the zero solution. 
\begin{prop}
\label{pro:existuhn}
Let $\bar \lambda>0$ be as in \eqref{e:small_trace} and assume that $|\lambda_1|, |\lambda_2| \leq \bar \lambda$ are sufficiently small so that \eqref{e:gam12}, \eqref{eq:gam56.1},  and \eqref{eq:gam78.1} hold. Then the system \eqref{eq:displachn}--\eqref{eq:fluidhn} has exactly one solution at each step $n$.
\end{prop}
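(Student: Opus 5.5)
Since at step $n$ the coefficients $B_1$ and $B_2$ in \eqref{eqn:B1B2} are fixed once $\vu_h^{n-1}$ is known, the system \eqref{eq:displachn2}--\eqref{eq:fluidhn} is a square linear system in the finite-dimensional unknowns $(\vu_h^n,\vv_h^n,p_h^n)\in \vU_h\times\vX_h\times Q_h$. The number of equations equals the number of unknowns once we also count the constraint $\int_\Om q_h\di\,\vv_h^n=0$ for all $q_h\in Q_h$ (i.e.\ $\vv_h^n\in\vV_h$). Existence is therefore equivalent to uniqueness, and it suffices to show that zero data $\vu_h^{n-1}=\mathbf 0$, $\vv_h^{n-1}=\mathbf 0$ forces the zero solution. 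Only the data terms on the right-hand sides vanish; $B_1,B_2$ stay as given, positive and bounded. By \eqref{e:gam12}, \eqref{eq:gam56.1} and \eqref{eq:gam78.1} we have $B_1\ge\gamma_1>0$ and $B_2\ge \gamma_1\gamma_5\gamma_7>0$ pointwise.

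The plan is an energy argument with a coupled test. Take $\vz_h=\vu_h^n$ in \eqref{eq:displachn2} and $\vw_h=\vv_h^n$ in \eqref{eq:fluidhn}. The pressure term drops because $\vv_h^n\in\vV_h$ and $p_h^n\in Q_h$. To cancel the coupling terms, I multiply the displacement equation by $1/\Delta t$ and add it to the fluid equation. The displacement equation then has right-hand side $\frac{\alpha}{\Delta t}\int_\Om\vv_h^n\cdot\vu_h^n$, and the fluid equation has right-hand side $\frac{\alpha}{\Delta t}\int_\Om\vu_h^n\cdot\vv_h^n$. The sum gives
\begin{equation*}
\frac{\alpha}{\Delta t^2}\|\vu_h^n\|^2_{L^2} + \frac{1}{E_1\Delta t}\int_\Om B_1|\veps(\vu_h^n)|^2 + \frac{|E_2|}{E_1\Delta t}\int_\Om B_2|\di\,\vu_h^n|^2 + \frac{\varrho}{\Delta t}\|\vv_h^n\|_{L^2}^2 + \nu\|\nabla\vv_h^n\|^2_{L^2} + \alpha\|\vv_h^n\|^2_{L^2} = \frac{2\alpha}{\Delta t}\int_\Om \vu_h^n\cdot\vv_h^n .
\end{equation*}
The right-hand side is handled by Young's inequality: $\frac{2\alpha}{\Delta t}|\int\vu_h^n\cdot\vv_h^n|\le \frac{\alpha}{\Delta t^2}\|\vu_h^n\|^2+\alpha\|\vv_h^n\|^2$. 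These two pieces are absorbed exactly by the first and sixth terms on the left. What remains is $\frac{1}{E_1\Delta t}\int B_1|\veps(\vu_h^n)|^2+\cdots+\nu\|\nabla\vv_h^n\|^2\le0$, with every term nonnegative. Since $B_1\ge\gamma_1>0$, Korn \eqref{eq:Korn} gives $\vu_h^n=\mathbf 0$, and $\nu>0$ with Poincar\'e gives $\vv_h^n=\mathbf 0$.

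Finally, with $\vu_h^n=\vv_h^n=\mathbf 0$, \eqref{eq:fluidhn} reduces to $\int_\Om p_h^n\di\,\vw_h=0$ for all $\vw_h\in\vX_h$. The discrete inf-sup condition \eqref{e:discrete_infsup} then yields $p_h^n=0$. This gives uniqueness, and hence existence, at each step $n$. The only delicate point is the scaling of the two tested equations so that the cross terms are exactly absorbed. If the constants did not match, an alternative is to use the $\Delta t$-dependent weights as above, which match exactly. Positivity of $B_1$ and $B_2$, which is where smallness of $\lambda_1,\lambda_2$ enters, is already given by the earlier bounds.
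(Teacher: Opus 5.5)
Your argument is correct and is essentially the paper's: both test with $\vz_h=\vu_h^n$ and $\vw_h=\vv_h^n$, weight the displacement identity by $1/\Delta t$, apply Young's inequality, and then use the inf-sup condition to get $p_h^n=0$. The only difference is bookkeeping: you add the two identities and absorb the doubled cross term $\frac{2\alpha}{\Delta t}\int_\Om \vu_h^n\cdot\vv_h^n$ in one step, which gives $\veps(\vu_h^n)=\mathbf{0}$ and $\vv_h^n=\mathbf{0}$ at once, whereas the paper substitutes one identity into the other and bounds the displacement energy by $\alpha\Delta t\|\vv_h^n\|^2_{L^2(\Om)}$ separately; note also that "to cancel the coupling terms" is a wording slip, since adding doubles them (as your display correctly shows) and it is Young's inequality that removes them.
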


\begin{proof}
Let $(\vu_h^n,\vv_h^n, p_h^n)$ be any solution of \eqref{eq:displachn}--\eqref{eq:fluidhn} with data $\vu_h^{n-1}={\bf 0}$ and $\vv_h^{n-1} = {\bf 0}$. Choosing $\vz_h = \vu_h^n $ and $\vw_h = \vv_h^n$ gives
\begin{equation}
\label{eq:displachn3}
\frac{\alpha}{\Delta\,t}\|\vu_h^n\|^2_{L^2(\Om)} + \frac{1}{E_1}\int_\Om B_1 |\veps( \vu_h^n)|^2
 + \frac{|E_2|}{E_1} \int_\Om B_2
|\di\,\vu_h^n|^2
= \alpha \int_\Om  \vv_h^n \cdot \vu_h^n,
\end{equation}
and
$$
\frac{\varrho}{\Delta\,t}\|\vv_h^n\|^2_{L^2(\Om)}
 +  \nu \| \nabla\, \vv_h^n\|^2_{L^2(\Om)}  + \alpha \|\vv_h^n\|^2_{L^2(\Om)}
= \frac{\alpha}{\Delta\,t}\int_\Om \vu_h^n\cdot\vv_h^n.
$$
By combining the two right-hand sides, we immediately derive
\begin{equation}
\label{eq:eq:fluidhn2}
\begin{split}
\frac{\varrho}{\Delta\,t}&\|\vv_h^n\|^2_{L^2(\Om)}
 +  \nu \| \nabla\, \vv_h^n\|^2_{L^2(\Om)}  + \alpha \|\vv_h^n\|^2_{L^2(\Om)}\\
&= \frac{\alpha}{\Delta\,t^2}\|\vu_h^n\|^2_{L^2(\Om)} + \frac{1}{\Delta\,t E_1}\int_\Om B_1 |\veps( \vu_h^n)|^2 
 + \frac{|E_2|}{ \Delta\,tE_1} \int_\Om B_2
|\di\,\vu_h^n|^2.
\end{split}
\end{equation}
But \eqref{eq:displachn3} also implies
\begin{equation}
\label{eq:displachn4}
\frac{\alpha}{2\Delta\,t}\|\vu_h^n\|^2_{L^2(\Om)} + \frac{1}{E_1}\int_\Om B_1 |\veps( \vu_h^n)|^2 
 + \frac{|E_2|}{E_1} \int_\Om B_2
|\di\,\vu_h^n|^2 \le \frac{\alpha}{2}\Delta\,t  \|\vv_h^n\|^2_{L^2(\Om)};
\end{equation}
as the functions $B_1$ and $B_2$ are positive, this implies in particular
\begin{equation}
\label{eq:uhvh}
\frac{1}{\Delta\,t} \|\vu_h^n\|^2_{L^2(\Om)} \le \Delta\,t \|\vv_h^n\|^2_{L^2(\Om)}.
\end{equation}
Adding \eqref{eq:displachn4} and \eqref{eq:uhvh} multiplied by $\frac{\alpha}{2}$ gives
$$
\frac{\alpha}{\Delta\,t}\|\vu_h^n\|^2_{L^2(\Om)} + \frac{1}{E_1}\int_\Om B_1 |\veps( \vu_h^n)|^2 
 + \frac{|E_2|}{E_1} \int_\Om B_2
|\di\,\vu_h^n|^2 \le \alpha \Delta\,t \|\vv_h^n\|^2_{L^2(\Om)}
$$
so that substituting  into \eqref{eq:eq:fluidhn2}, we obtain
$$
\frac{\varrho}{\Delta\,t}\|\vv_h^n\|^2_{L^2(\Om)}
 +  \nu \| \nabla\, \vv_h^n\|^2_{L^2(\Om)}  + \alpha \|\vv_h^n\|^2_{L^2(\Om)} \le \alpha \|\vv_h^n\|^2_{L^2(\Om)}
$$
and thus $\vv_h^n ={\bf 0}$. Returning to  \eqref{eq:uhvh}, we deduce that $\vu_h^n ={\bf 0}$ and so $p_h^n =0$ as well.
\end{proof}

%--------------------------------
\subsection{Stability of the discrete solution}
\label{subsec:stab.h}

In order to derive a priori upper bounds for the discrete solution, we apply the Poincar\'e \eqref{eq:Poincare}, Korn \eqref{eq:Korn} and Young inequalities to  \eqref{eq:displachn} with $\vz_h=\vu_h^n$ and obtain
\begin{equation}
\label{eq:uhn1}
\begin{split}
&\frac{\alpha}{\Delta\,t}\big(\|\vu_h^n\|^2_{L^2(\Om)} - \|\vu_h^{n-1}\|^2_{L^2(\Om)} + \|\vu_h^n- \vu_h^{n-1} \|^2_{L^2(\Om)}\big)\\
&+ \frac{\gamma_1}{E_1} \big(\|\veps(\vu_h^n)\|^2_{L^2(\Om)} + 2|E_2|\gamma_5 \gamma_7 \|\di\,\vu_h^n\|^2_{L^2(\Om)}\big)
 \le \frac{E_1}{\gamma_1}(\alpha \cP \cK)^2\|\vv_h^n\|^2_{L^2(\Om)}.
\end{split}
\end{equation}
Similarly, but  to \eqref{eq:fluidhn} with $\vw_h=\vv_h^n$, we get
\begin{equation}
\label{eq:vhn1}
    \begin{split}
 \frac{\varrho}{\Delta\,t}&\big(\|\vv_h^n\|^2_{L^2(\Om)} - \|\vv_h^{n-1}\|^2_{L^2(\Om)} + \|\vv_h^n- \vv_h^{n-1} \|^2_{L^2(\Om)}\big)\\
& +  \nu \| \nabla\, \vv_h^n\|^2_{L^2(\Om)}  + 2\alpha \|\vv_h^n\|^2_{L^2(\Om)}
  \le \frac{\alpha^2}{\nu }\|\frac{1}{\Delta\,t} (\vu_h^n- \vu_h^{n-1}) \|^2_{H^{-1}(\Om)}.
        \end{split}
\end{equation}

The next proposition gives
an estimate for the discrete time derivative of $\vu_h^n$. 
To simplify the expressions below, we introduce
two constants
\begin{equation}
\label{eq:cD1}
\cD_1 := \frac{(\gamma_6 \gamma_8)^2}{ \gamma_5 \gamma_7} |E_2|, \quad \cD_2 := 1+ \frac{\cD_1}{2},
\end{equation}
and the following quantity for any function $\vf$ in $H^1(\Om)^d$, 
\begin{equation}
\label{eq:d1f}
d_2(\vf) := \|\veps(\vf)\|^2_{L^2(\Om)} + 2|E_2|\gamma_5 \gamma_7  \|\di\,\vf\|^2_{L^2(\Om)}.
\end{equation}

\begin{prop}
\label{pro:duh-1}
Let $\bar \lambda>0$ be as in \eqref{e:small_trace} and assume that $|\lambda_1|, |\lambda_2| \leq \bar \lambda$ are sufficiently small so that \eqref{e:gam12}, \eqref{eq:gam56.1},  and \eqref{eq:gam78.1} hold. Then the  discrete time derivative of $\vu_h^n$ satisfies
\begin{equation}
\label{eq:uhn2-1.1}
\begin{split}
\alpha^2 \|\frac{1}{\Delta\,t}(\vu_h^n- \vu_h^{n-1}) \|^2_{H^{-1}(\Om)} \le 3C^2 \Big(&\alpha^2\big(\frac{h}{\Delta\,t}\big)^2\|\vu_h^n- \vu_h^{n-1}\|^2_{L^2(\Om)}
 +(\alpha \cP)^2 \|\vv_h^n\|^2_{L^2(\Om)} \\
 &+ \big(\frac{\gamma_2}{E_1}\big)^2\cD_2  d_2(\vu_h^n)\Big),
\end{split}
\end{equation}
where $C$ is the constant of \eqref{eq:PihC}.
\end{prop}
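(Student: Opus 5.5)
The plan is a duality argument that goes through the interpolant $\Pi_h$ and then uses the discrete displacement equation \eqref{eq:displachn}. Fix $\vz \in H^1_0(\Om)^d$ and write
$$
\langle \tfrac{1}{\Delta t}(\vu_h^n-\vu_h^{n-1}),\vz\rangle = \tfrac{1}{\Delta t}\int_\Om(\vu_h^n-\vu_h^{n-1})\cdot(\vz-\Pi_h\vz) + \tfrac{1}{\Delta t}\int_\Om(\vu_h^n-\vu_h^{n-1})\cdot\Pi_h\vz .
$$
The $H^{-1}$ norm is then the supremum of this over $\|\nabla\vz\|_{L^2(\Om)}=1$. Multiplying by $\alpha$ and splitting into three pieces explains the constant $3C^2$, via $(a+b+c)^2\le 3(a^2+b^2+c^2)$.

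\emph{First term.} By Cauchy--Schwarz and the second bound in \eqref{eq:PihC}, it is at most
$$
\alpha\,\frac{h}{\Delta t}\,C\,\|\vu_h^n-\vu_h^{n-1}\|_{L^2(\Om)}\,\|\nabla\vz\|_{L^2(\Om)} .
$$
This produces the first summand in \eqref{eq:uhn2-1.1}.

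\emph{Second term.} Since $\Pi_h\vz\in\vU_h$, we may take $\vz_h=\Pi_h\vz$ in \eqref{eq:displachn}. This replaces $\frac{\alpha}{\Delta t}\int_\Om(\vu_h^n-\vu_h^{n-1})\cdot\Pi_h\vz$ by three contributions:
\begin{itemize}
\item the term $\alpha\int_\Om\vv_h^n\cdot\Pi_h\vz$, bounded via Poincar\'e \eqref{eq:Poincare} and the $H^1$ stability in \eqref{eq:PihC} by $\alpha\cP C\|\vv_h^n\|_{L^2(\Om)}\|\nabla\vz\|_{L^2(\Om)}$;
\item minus the elastic term with coefficient $B_1$;
\item minus the divergence term with coefficient $B_2$, both as in \eqref{eqn:B1B2}.
\end{itemize}

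\emph{Elastic term.} Using $B_1\le\gamma_2$ from \eqref{e:gam12} and $\|\veps(\Pi_h\vz)\|_{L^2(\Om)}\le\|\nabla\Pi_h\vz\|_{L^2(\Om)}\le C\|\nabla\vz\|_{L^2(\Om)}$, it is bounded by $\frac{\gamma_2}{E_1}C\|\veps(\vu_h^n)\|_{L^2(\Om)}\|\nabla\vz\|_{L^2(\Om)}$.

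\emph{Divergence term.} Bound $B_2\le\gamma_8\gamma_2\gamma_6$ using \eqref{e:gam12}, \eqref{eq:gam56.1} and \eqref{eq:gam78.1}, and $\|\di\,\Pi_h\vz\|_{L^2(\Om)}\le\sqrt d\,C\|\nabla\vz\|_{L^2(\Om)}$. The resulting term is $\frac{|E_2|\gamma_2\gamma_6\gamma_8}{E_1}\|\di\,\vu_h^n\|_{L^2(\Om)}$ times $C\|\nabla\vz\|$, and it must be expressed through $d_2(\vu_h^n)$. Write
$$
|E_2|\gamma_6\gamma_8\|\di\,\vu_h^n\|_{L^2(\Om)} = \big(\cD_1\,|E_2|\gamma_5\gamma_7\big)^{1/2}\|\di\,\vu_h^n\|_{L^2(\Om)},
$$
which follows from the definition \eqref{eq:cD1} of $\cD_1$. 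Then
$$
\big(\|\veps(\vu_h^n)\|_{L^2(\Om)} + (\cD_1|E_2|\gamma_5\gamma_7)^{1/2}\|\di\,\vu_h^n\|_{L^2(\Om)}\big)^2 \le \Big(1+\frac{\cD_1}{2}\Big)\Big(\|\veps(\vu_h^n)\|^2_{L^2(\Om)}+2|E_2|\gamma_5\gamma_7\|\di\,\vu_h^n\|^2_{L^2(\Om)}\Big),
$$
by Cauchy--Schwarz in $\mathbb R^2$ with weights $(1,\ 2|E_2|\gamma_5\gamma_7)$. This gives $\cD_2\,d_2(\vu_h^n)$ with the factor $(\gamma_2/E_1)^2$.

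\emph{Main obstacle.} The delicate point is this bookkeeping of the divergence term. One has to avoid the $\sqrt d$ from $\|\di\,\Pi_h\vz\|$, either by absorbing it into $C$ or by using $|\di\,\vw|\le\sqrt d\,|\veps(\vw)|$ and noting that $\|\di\,\vw\|_{L^2}\le\|\nabla\vw\|_{L^2}$ holds for $H^1_0$ functions. The constants then have to match exactly $\cD_2$, and the cross term is handled by the weighted Cauchy--Schwarz. Finally, square, sum the three contributions with $(a+b+c)^2\le 3(a^2+b^2+c^2)$, and take the supremum over $\vz$.
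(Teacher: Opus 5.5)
Your proposal is correct and follows the paper's proof essentially step by step: the same splitting $\vz=(\vz-\Pi_h\vz)+\Pi_h\vz$, testing \eqref{eq:displachn} with $\Pi_h\vz$, bounding the coefficients by $\gamma_2$ and $\gamma_2\gamma_6\gamma_8$, squaring with the factor $3$, and a weighted Cauchy--Schwarz inequality to get $\cD_2\, d_2(\vu_h^n)$. Your detailed constant bookkeeping matches the paper's. Of your two options for avoiding $\sqrt d$, only the bound $\|\di\,\vw\|_{L^2(\Om)}\le\|\nabla\vw\|_{L^2(\Om)}$ for $\vw\in H^1_0(\Om)^d$ keeps $C$ equal to the constant of \eqref{eq:PihC}, as the statement requires; absorbing $\sqrt d$ into $C$ would change that constant.
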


\begin{proof}
By definition, we have
$$
\|\frac{1}{\Delta\,t} (\vu_h^n- \vu_h^{n-1}) \|_{H^{-1}(\Om)} = \sup_{\vvarphi \in H^1_0(\Om)^d}
\frac{1}{\|\nabla\,\vvarphi\|_{L^2(\Om)}}\int_\Om \frac{1}{\Delta\,t}(\vu_h^n- \vu_h^{n-1}) \cdot \vvarphi.
$$
Now,
$$
\frac{1}{\Delta\,t}(\vu_h^n- \vu_h^{n-1}) \cdot \vvarphi = \frac{1}{\Delta\,t}(\vu_h^n- \vu_h^{n-1}) \cdot ( \vvarphi - \Pi_h \vvarphi) +\frac{1}{\Delta\,t}(\vu_h^n- \vu_h^{n-1}) \cdot \Pi_h\vvarphi,
$$
where $\Pi_h \vvarphi \in \vU_h$  interpolates $ \vvarphi$ on $\vU_h$. Then
$$
\int_\Om\big|\frac{1}{\Delta\,t}(\vu_h^n- \vu_h^{n-1}) \cdot \vvarphi \big| \le  C \frac{h}{\Delta\,t}\|\vu_h^n- \vu_h^{n-1}\|_{L^2(\Om)} \|\nabla\,\vvarphi\|_{L^2(\Om)}  +\int_\Om\big|\frac{1}{\Delta\,t}(\vu_h^n- \vu_h^{n-1}) \cdot \Pi_h\vvarphi\big|.
$$
Taking $\vz_h=\Pi_h\vvarphi$ in \eqref{eq:displachn} and invoking the Poincar\'e inequality \eqref{eq:Poincare} along with the stability of $\Pi_h$ yields
\begin{align*}
&\alpha \int_\Om\big|\frac{1}{\Delta\,t}(\vu_h^n- \vu_h^{n-1}) \cdot \Pi_h\vvarphi\big|\\
& \le C \Big(\alpha \cP \|\vv_h^n\|_{L^2(\Om)}  + \frac{\gamma_2}{E_1}\big(\|\veps(\vu_h^n)\|_{L^2(\Om)} + \gamma_6 \gamma_8 |E_2| \|\di\,\vu_h^n\| _{L^2(\Om)}\big)\Big) \|\nabla\,\vvarphi\|_{L^2(\Om)}.
\end{align*}
Hence, we obtain
\begin{equation}
\label{eq:uhn-1}
\begin{split}
\alpha \|\frac{1}{\Delta\,t}(\vu_h^n&- \vu_h^{n-1}) \|_{H^{-1}(\Om)} \le C \Big(\alpha\frac{h}{\Delta\,t}\|\vu_h^n- \vu_h^{n-1}\|_{L^2(\Om)}\\
& +\alpha \cP \|\vv_h^n\|_{L^2(\Om)}  + \frac{\gamma_2}{E_1}\big(\|\veps(\vu_h^n)\|_{L^2(\Om)} + \gamma_6 \gamma_8 |E_2| \|\di\,\vu_h^n\| _{L^2(\Om)}\big)\Big),
\end{split}
\end{equation}
and
\begin{equation}
\label{eq:uhn2-1}
\begin{split}
\alpha^2 \|\frac{1}{\Delta\,t}(\vu_h^n&- \vu_h^{n-1}) \|^2_{H^{-1}(\Om)} \le 3C^2 \Big(\alpha^2\big(\frac{h}{\Delta\,t}\big)^2\|\vu_h^n- \vu_h^{n-1}\|^2_{L^2(\Om)}\\
& +(\alpha \cP)^2 \|\vv_h^n\|^2_{L^2(\Om)}  + \big(\frac{\gamma_2}{E_1}\big)^2\big(\|\veps(\vu_h^n)\|_{L^2(\Om)} + \gamma_6 \gamma_8 |E_2| \|\di\,\vu_h^n\| _{L^2(\Om)}\big)^2\Big).
\end{split}
\end{equation}
To prove \eqref{eq:uhn2-1.1}, it remains to note that
$$
\big(\|\veps(\vu_h^n)\|_{L^2(\Om)} + \gamma_6 \gamma_8 |E_2 \|\di\,\vu_h^n\| _{L^2(\Om)}\big)^2 
\le  \cD_2 d_2(\vu_h^n),
$$
which follows from a weighted Cauchy-Schwarz inequality.
\end{proof}

By substituting \eqref{eq:uhn2-1.1} into \eqref{eq:vhn1}, multiplying by $\Delta\,t$ and summing over $n$ we obtain
\begin{equation}
\label{eq:vhn2}
\begin{split}
&\varrho\big(\|\vv_h^n\|^2_{L^2(\Om)}  + \sum_{i=1}^n\|\vv_h^i- \vv_h^{i-1} \|^2_{L^2(\Om)}\big)
 +  \nu \sum_{i=1}^n \Delta\,t \| \nabla\, \vv_h^i\|^2_{L^2(\Om)}\\
 &+ 2\alpha \sum_{i=1}^n \Delta\,t\|\vv_h^i\|^2_{L^2(\Om)}
 \le \varrho\|\vv_h^0\|^2_{L^2(\Om)} 
 +\frac{3}{\nu}C^2 \Big[\alpha^2\big(\sum_{i=1}^n\frac{h^2}{\Delta\,t}\|\vu_h^i- \vu_h^{i-1}\|^2_{L^2(\Om)}
 \\
 &+\cP^2 \sum_{i=1}^n {\Delta\,t}\|\vv_h^i\|^2_{L^2(\Om)}\big)
 + \big(\frac{\gamma_2}{E_1}\big)^2\cD_2\sum_{i=1}^n (\Delta \,t) d_2(\vu_h^i)\Big].
\end{split}
\end{equation}
From here, to derive an estimate for $\vv_h^n$, we must estimate the terms involving $\vu_h^i$, $0\le i\le n$, in terms of $\vv_h^n$.  For this, multiplying \eqref{eq:uhn1} by $ \Delta\,t$ and summing over $n$, we obtain
\begin{equation}
\label{eq:uhn2}
\begin{split}
\alpha\big(\|\vu_h^n\|^2_{L^2(\Om)}&  + \sum_{i=1}^n\|\vu_h^i- \vu_h^{i-1} \|^2_{L^2(\Om)}\big)
+ \frac{\gamma_1}{E_1} \sum_{i=1}^n (\Delta \,t) d_2(\vu_h^i)\\
& \le \alpha\|\vu_h^0\|^2_{L^2(\Om)} +  \frac{E_1}{\gamma_1}(\alpha \cP \cK)^2\sum_{i=1}^n \Delta\,t\|\vv_h^i\|^2_{L^2(\Om)}.
\end{split}
\end{equation}
This implies on the one hand
\begin{equation}
\label{eq:uhn3}
\sum_{i=1}^n (\Delta \,t) d_2(\vu_h^i)\\
 \le \frac{E_1}{\gamma_1}\Big(\alpha\|\vu_h^0\|^2_{L^2(\Om)} +  \frac{E_1}{\gamma_1}(\alpha \cP \cK)^2\sum_{i=1}^n \Delta\,t\|\vv_h^i\|^2_{L^2(\Om)}\Big),
\end{equation}
and on the other hand
\begin{equation}
\label{eq:uhn4}
\sum_{i=1}^n\|\vu_h^i- \vu_h^{i-1} \|^2_{L^2(\Om)} \le \|\vu_h^0\|^2_{L^2(\Om)} + \frac{E_1}{\gamma_1} \alpha (\cP \cK)^2 \sum_{i=1}^n \Delta\,t\|\vv_h^i\|^2_{L^2(\Om)}.
\end{equation}
With \eqref{eq:uhn3} and \eqref{eq:uhn4}, \eqref{eq:vhn2} becomes
\begin{equation}
\label{eq:vhn3}
\begin{split}
&\varrho\big(\|\vv_h^n\|^2_{L^2(\Om)}  + \sum_{i=1}^n\|\vv_h^i- \vv_h^{i-1} \|^2_{L^2(\Om)}\big)
 +  \nu \sum_{i=1}^n \Delta\,t \| \nabla\, \vv_h^i\|^2_{L^2(\Om)}  + 2\alpha \sum_{i=1}^n \Delta\,t\|\vv_h^i\|^2_{L^2(\Om)}\\
 &\le \varrho\|\vv_h^0\|^2_{L^2(\Om)} 
 +\frac{3C^2}{\nu} \Big[\alpha^2\frac{h^2}{\Delta \,t}\big(
 \|\vu_h^0\|^2_{L^2(\Om)} + \frac{E_1}{\gamma_1} \alpha (\cP \cK)^2 \sum_{i=1}^n \Delta\,t\|\vv_h^i\|^2_{L^2(\Om)}\big)\\
 &+ (\alpha \cP)^2 \sum_{i=1}^n \Delta \,t \|\vv_h^i\|^2_{L^2(\Om)}
 + \frac{\gamma_2^2}{E_1 \gamma_1}\cD_2\big(\alpha \|\vu_h^0\|^2_{L^2(\Om)}
 + \frac{E_1}{\gamma_1}(\alpha \cP \cK)^2
 \sum_{i=1}^n \Delta \,t\|\vv_h^i\|^2_{L^2(\Om)} \big) \Big]. 
\end{split}
\end{equation}
To be concise, we introduce two more constants
\begin{equation}
\label{eq:cDi}
\cD_3 :=\frac{(\alpha h)^2}{\Delta \,t} + \frac{\alpha}{E_1}\frac{\gamma_2^2}{\gamma_1}\cD_2, \quad 
    \cD_4 := (\alpha \cP)^2\big(1 + \frac{E_1 \cK^2}{\alpha \gamma_1} \cD_3\big).
    \end{equation}
Under the mild restriction
\begin{equation}
    \label{eq:h2det}
 h^2 \le \Delta \,t,   
\end{equation}
these constants are independent of $h$ and $\Delta\,t$.
Then by collecting terms and dividing by $\varrho$, \eqref{eq:vhn3} becomes
\begin{equation}
\label{eq:vhn4}
\begin{split}
&\|\vv_h^n\|^2_{L^2(\Om)}  + \sum_{i=1}^n\|\vv_h^i- \vv_h^{i-1} \|^2_{L^2(\Om)}
 +  \frac{\nu}{\varrho} \sum_{i=1}^n \Delta\,t \| \nabla\, \vv_h^i\|^2_{L^2(\Om)}  + 2\frac{\alpha}{\varrho} \sum_{i=1}^n \Delta\,t\|\vv_h^i\|^2_{L^2(\Om)}\\
 &\le \|\vv_h^0\|^2_{L^2(\Om)} + \frac{3}{\nu\varrho}C^2\big(\cD_3 \|\vu_h^0\|^2_{L^2(\Om)} + \cD_4 \sum_{i=1}^n \Delta\,t\|\vv_h^i\|^2_{L^2(\Om)}\big).
\end{split}
\end{equation}
To apply the discrete Gronwall's lemma, we write
$$
\Delta\,t \|\vv_h^n\|^2_{L^2(\Om)} \le 2 \Delta\,t \|\vv_h^{n-1}\|^2_{L^2(\Om)} + 2 \Delta\,t \|\vv_h^n-\vv_h^{n-1}\|^2_{L^2(\Om)},
$$
and suppose $\Delta\,t$ is small enough so that
\begin{equation}
    \label{eq:delt.t}
    6   \frac{C^2 \cD_4}{\nu \varrho} \Delta\,t\le 1.
\end{equation}
Then
\begin{align*}
&\|\vv_h^n\|^2_{L^2(\Om)}  +  \sum_{i=1}^{n-1}\|\vv_h^i- \vv_h^{i-1} \|^2_{L^2(\Om)}
 +  \frac{\nu}{\varrho} \sum_{i=1}^n \Delta\,t \| \nabla\, \vv_h^i\|^2_{L^2(\Om)}  + 2\frac{\alpha}{\varrho} \sum_{i=1}^n \Delta\,t\|\vv_h^i\|^2_{L^2(\Om)}\\
 &\le \|\vv_h^0\|^2_{L^2(\Om)} + \frac{3}{\nu\varrho}C^2\Big(\cD_3 \|\vu_h^0\|^2_{L^2(\Om)} + \cD_4 \big( 3  \Delta\,t  \|\vv_h^{n-1}\|^2_{L^2(\Om)} +  \sum_{i=1}^{n-2} \Delta\,t\|\vv_h^i\|^2_{L^2(\Om)}\big) \Big).
\end{align*}

By applying the discrete Gronwall's lemma, we obtain an a priori estimate for $\vv_h^n$.

\begin{thm}
\label{thm:bddvh}
Let $\bar \lambda>0$ be as in \eqref{e:small_trace} and assume that $|\lambda_1|, |\lambda_2| \leq \bar \lambda$ are sufficiently small so that \eqref{e:gam12}, \eqref{eq:gam56.1},  and \eqref{eq:gam78.1} hold. Assume that the time-step $\Delta\, t$ satisfies the restriction \eqref{eq:delt.t}. 
Then the sequence $\vv_h^n$ satisfies for all $1\le n \le N$,
\begin{equation}
\label{eq:vhn5}
\begin{split}
\|\vv_h^n\|^2_{L^2(\Om)}  &+ \sum_{i=1}^{n-1}\|\vv_h^i- \vv_h^{i-1} \|^2_{L^2(\Om)}
 +  \frac{\nu}{\varrho} \sum_{i=1}^n \Delta\,t \| \nabla\, \vv_h^i\|^2_{L^2(\Om)}\\
 &  + 2\frac{\alpha}{\varrho} \sum_{i=1}^n \Delta\,t\|\vv_h^i\|^2_{L^2(\Om)}
 \le C_1 {\rm exp}(C_2t_n),
\end{split}
\end{equation}
where
$$
C_1:= \|\vv_h^0\|^2_{L^2(\Om)} + \frac{3C^2\cD_3}{\nu\varrho}  \|\vu_h^0\|^2_{L^2(\Om)} , \quad C_2 := \frac{9 C^2\cD_4}{\nu\varrho},
$$
and the constants $\cD_i$ are given by \eqref{eq:cD1} and \eqref{eq:cDi}. If \eqref{eq:h2det} holds, then these constants are independent of $h$ and $\Delta\, t$.
\end{thm}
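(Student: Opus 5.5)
The plan is to start from the last displayed inequality preceding the statement, which already holds for every $n$ under \eqref{eq:delt.t}. Set
$$
a_n := \|\vv_h^n\|^2_{L^2(\Om)} + \sum_{i=1}^{n-1}\|\vv_h^i- \vv_h^{i-1} \|^2_{L^2(\Om)} + \frac{\nu}{\varrho} \sum_{i=1}^n \Delta\,t \| \nabla\, \vv_h^i\|^2_{L^2(\Om)} + 2\frac{\alpha}{\varrho} \sum_{i=1}^n \Delta\,t\|\vv_h^i\|^2_{L^2(\Om)}.
$$
The right-hand side of that inequality is then bounded by
$$
C_1 + \frac{3C^2\cD_4}{\nu\varrho}\Big(3\Delta\,t\|\vv_h^{n-1}\|^2_{L^2(\Om)} + \sum_{i=1}^{n-2}\Delta\,t\|\vv_h^i\|^2_{L^2(\Om)}\Big) \le C_1 + C_2\sum_{i=1}^{n-1}\Delta\,t\,\|\vv_h^i\|^2_{L^2(\Om)},
$$
since $3\cdot 3 = 9$ and the coefficients $1\le 3$ on the older terms can be enlarged to $3$. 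So we get $a_n \le C_1 + C_2\sum_{i=1}^{n-1}\Delta t\, a_i$, using $\|\vv_h^i\|^2\le a_i$. The key feature is that the sum on the right runs only up to $n-1$: the implicit term $\Delta t\|\vv_h^n\|^2$ has already been absorbed through \eqref{eq:delt.t} and the splitting $\Delta t\|\vv_h^n\|^2\le 2\Delta t\|\vv_h^{n-1}\|^2+2\Delta t\|\vv_h^n-\vv_h^{n-1}\|^2$. This is exactly why the left side of \eqref{eq:vhn5} retains only $\sum_{i\le n-1}$ of the increments.

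Next comes the explicit form of the discrete Gronwall lemma. If $a_n\le C_1 + C_2\Delta t\sum_{i=1}^{n-1}a_i$ for all $n\ge1$, then $a_n\le C_1(1+C_2\Delta t)^{n-1}\le C_1\exp(C_2 t_n)$. I would prove this by strong induction. Define $S_n := C_1 + C_2\Delta t\sum_{i<n}a_i$. Then $S_{n+1}=S_n + C_2\Delta t\, a_n \le (1+C_2\Delta t)S_n$ and $S_1=C_1$, so $a_n\le S_n\le C_1(1+C_2\Delta t)^{n-1}$. Finally $1+x\le e^x$ gives the bound $C_1e^{C_2 t_{n-1}}\le C_1e^{C_2t_n}$. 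The case $n=1$ is covered directly, since the sum is empty.

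The statement that the constants are independent of $h$ and $\Delta t$ comes from inspecting \eqref{eq:cD1} and \eqref{eq:cDi}. Only $\cD_3$ contains $h^2/\Delta t$, and under \eqref{eq:h2det} this ratio is at most $1$. The $\gamma_i$ depend only on $\lambda_1,\lambda_2,\delta,E_1,E_2$. Also $\|\vu_h^0\|$ and $\|\vv_h^0\|$ are controlled uniformly through the stability \eqref{eq:PihC} of $\Pi_h$ together with Poincaré, so $C_1$ is bounded independently of $h$ as well.

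The main obstacle is the bookkeeping in the reduction to the Gronwall form. One must check that the positive terms on the left are all nonnegative, so that dropping them or bounding $\|\vv_h^i\|^2$ by $a_i$ is legitimate. One must also confirm that the displayed inequality preceding the theorem was obtained correctly from \eqref{eq:vhn4}: the term $\frac{3C^2\cD_4}{\nu\varrho}\cdot 2\Delta t\|\vv_h^n-\vv_h^{n-1}\|^2$ is at most $\|\vv_h^n-\vv_h^{n-1}\|^2$ by \eqref{eq:delt.t}, and it cancels the $i=n$ increment on the left. Once that is granted, the rest is the routine induction above.
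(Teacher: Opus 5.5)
Your route is the paper's: start from the inequality displayed just before the theorem and apply the discrete Gronwall lemma, which you make explicit by induction. There is, however, a slip at $n=1$. There the right-hand side term $3\Delta\,t\|\vv_h^{n-1}\|^2_{L^2(\Om)}$ equals $3\Delta\,t\|\vv_h^{0}\|^2_{L^2(\Om)}$; it comes from splitting $\Delta\,t\|\vv_h^1\|^2_{L^2(\Om)}$ around $\vv_h^0$, and it is not contained in the empty sum $\sum_{i=1}^{0}$. So $a_n\le C_1+C_2\Delta\,t\sum_{i=1}^{n-1}a_i$ holds only for $n\ge 2$. At $n=1$ you only have $a_1\le C_1+C_2\Delta\,t\|\vv_h^0\|^2_{L^2(\Om)}$. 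Your base case $a_1\le S_1=C_1$, on which the whole induction rests, is therefore not established, and neither is the sharper bound $C_1\exp(C_2t_{n-1})$ you claim.

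The repair is immediate. Set $a_0:=\|\vv_h^0\|^2_{L^2(\Om)}\le C_1$ and $S_n:=C_1+C_2\Delta\,t\sum_{i=0}^{n-1}a_i$, so that $S_0=C_1$. Then $a_n\le S_n$ for all $n\ge 0$ and $S_{n+1}\le(1+C_2\Delta\,t)S_n$. This gives $a_n\le C_1(1+C_2\Delta\,t)^n\le C_1\exp(C_2t_n)$, and the extra factor is exactly why the theorem is stated with $t_n$ rather than $t_{n-1}$. The rest of your bookkeeping is fine: the constant $3\cdot 3=9$, the absorption of the $i=n$ increment via \eqref{eq:delt.t}, and the $h$-independence under \eqref{eq:h2det}. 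One small correction there: $\cD_4$ also depends on $h^2/\Delta\,t$, through $\cD_3$.
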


By substituting this result into \eqref{eq:uhn2}, we immediately derive an estimate for $\vu_h^n$.

\begin{cor}
\label{cor:bdduhn}
With the notation and assumptions of Theorem \ref{thm:bddvh}, the sequence $\vu_h^n$ satisfies for $1\le n \le N$,
\begin{equation}
\label{eq:uhn5}
\begin{split}
\|\vu_h^n\|^2_{L^2(\Om)}  + &\sum_{i=1}^n\|\vu_h^i- \vu_h^{i-1} \|^2_{L^2(\Om)}
+ \frac{\gamma_1}{\alpha E_1} \sum_{i=1}^n (\Delta\,t)d_2(\vu_h^i)\\
& \le \|\vu_h^0\|^2_{L^2(\Om)} + \frac{E_1}{\gamma_1} \frac{(\cP \cK)^2}{2} \varrho C_1 {\rm exp}(C_2t_n). 
\end{split}
\end{equation}
\end{cor}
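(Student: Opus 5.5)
The plan is to take the summed displacement energy inequality \eqref{eq:uhn2}, which already has the left-hand side of \eqref{eq:uhn5} up to the factor $\alpha$. We then bound its only $\vv_h$-dependent term, $\sum_{i=1}^n \Delta\,t\|\vv_h^i\|^2_{L^2(\Om)}$, using Theorem~\ref{thm:bddvh}.

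First, divide \eqref{eq:uhn2} by $\alpha$. This gives
\[
\|\vu_h^n\|^2_{L^2(\Om)} + \sum_{i=1}^n\|\vu_h^i- \vu_h^{i-1} \|^2_{L^2(\Om)} + \frac{\gamma_1}{\alpha E_1} \sum_{i=1}^n (\Delta\,t) d_2(\vu_h^i) \le \|\vu_h^0\|^2_{L^2(\Om)} + \frac{E_1}{\gamma_1}\alpha (\cP\cK)^2 \sum_{i=1}^n \Delta\,t\|\vv_h^i\|^2_{L^2(\Om)}.
\]
The left-hand side is exactly that of \eqref{eq:uhn5}.

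Second, drop all nonnegative terms in \eqref{eq:vhn5} except $2\frac{\alpha}{\varrho}\sum_{i=1}^n\Delta\,t\|\vv_h^i\|^2_{L^2(\Om)}$. This gives
\[
\sum_{i=1}^n \Delta\,t\|\vv_h^i\|^2_{L^2(\Om)} \le \frac{\varrho}{2\alpha} C_1 \exp(C_2 t_n),
\]
valid for $1\le n\le N$ under the hypotheses of Theorem~\ref{thm:bddvh}. Substituting this into the first step, the factor $\alpha$ cancels. The coefficient becomes $\frac{E_1}{\gamma_1}\frac{(\cP\cK)^2}{2}\varrho C_1\exp(C_2t_n)$, which is exactly the right-hand side of \eqref{eq:uhn5}.

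There is no real obstacle. The only care needed is the bookkeeping of constants: the factor $\alpha$ must cancel between \eqref{eq:uhn2} and the $2\alpha/\varrho$ weight in \eqref{eq:vhn5}. One should also check that \eqref{eq:uhn2} rests only on the coefficient bounds \eqref{e:gam12}, \eqref{eq:gam56.1}, \eqref{eq:gam78.1}, which are already assumed. Finally, note that the sum in \eqref{eq:uhn5} runs to $n$ rather than $n-1$, which is fine because \eqref{eq:uhn2} itself contains the full sum.
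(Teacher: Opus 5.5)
Your proof is correct and takes the same route as the paper. The paper also obtains \eqref{eq:uhn5} by substituting the bound $\sum_{i=1}^n \Delta\,t\|\vv_h^i\|^2_{L^2(\Om)} \le \frac{\varrho}{2\alpha}C_1\exp(C_2t_n)$ from \eqref{eq:vhn5} into \eqref{eq:uhn2} divided by $\alpha$, and your constant bookkeeping, including the cancellation of $\alpha$, checks out.
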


We end this section with a pressure estimate. Note that an estimate cannot be established in $L^2(0,T;L^2_0(\Om))$, but in $W^{-1, \infty}_0(0,T;L^2_0(\Om))$, the dual space of $W^{1,1}(0,T;L^2_0(\Om))$. For its derivation, it is convenient to introduce the step function
\begin{equation}
    \label{eq:funcp_h}
 \forall\, 1 \le i \le N,\quad    p_h(t)|_{]t_{i-1},t_i]} = p_h^i. 
\end{equation}
Since $p_h^i$ belongs to $L^2_0(\Om)$ for all $i$, the function $p_h(t)$ belongs to $L^2_0(\Om)$ for all $t$, $0<t \le T$.
We then introduce the sequence of sums
\begin{equation}\label{e:Phn}
P_h^n = \sum_{i=1}^n \Delta\,t p_h^i= \int_0^{t_n} p_h(t).
\end{equation}
Then $P_h^0 = 0$, $P_h^n$ belongs to $Q_h \subset L^2_0(\Om)$, and
\begin{equation}
    \label{eq:PhPhn-1}
\forall\, 1 \le i \le n,\quad    p_h^i = \frac{1}{\Delta \,t} \big(P_h^i-P_h^{i-1}\big).
\end{equation}
In virtue of \eqref{eq:fluidhn}, $P_h^n$ solves for all $1 \le n  \le N$,
\begin{equation}
     \label{eq:Phn}
     \begin{split}
     \int_\Om P_h^n\di\,\vw_h =& 
   \varrho\int_\Om (\vv_h^n- \vv_h^{0})\cdot \vw_h
 +   \nu \sum_{i=1}^n\Delta\,t \int_\Om \nabla\, \vv_h^i : \nabla\,
\vw_h\\
& + \alpha \sum_{i=1}^n\Delta\,t\int_\Om \vv_h^i \cdot \vw_h 
- \alpha\int_\Om (\vu_h^n- \vu_h^0)\cdot\vw_h.
\end{split}
 \end{equation}
The discrete inf-sup condition easily implies that
\begin{align*}
     \|P_h^n\|_{L^2(\Om)} \le& \frac{1}{\beta^*}\Big( 
\varrho \cP \| \vv_h^n - \vv_h^{0}\|_{L^2(\Om)} +
\nu \sqrt{T} \big( \sum_{i=1}^n \Delta\,t\|\nabla \,\vv_h^i\|^2_{L^2(\Om)}\big)^\frac{1}{2}\\
& + \alpha \cP \sqrt{T} \big( \sum_{i=1}^n \Delta\,t\|\vv_h^i\|^2_{L^2(\Om)}\big)^\frac{1}{2} + 
\alpha \cP \| \vu_h^n - \vu_h^{0}\|_{L^2(\Om)}\Big),
\end{align*}
where $\beta^*$ is the constant of the discrete inf-sup condition. Then, the estimates of Theorem \ref{thm:bddvh} and Corollary \ref{cor:bdduhn} and a weighted Cauchy-Schwarz inequality yield the upper bound, valid for all $1 \le n \le N$
\begin{equation}
     \label{eq:|Ph|}
    \|P_h^n\|_{L^2(\Om)} \le C_3 + C_4 \big( C_1 \mbox{exp} (C_2t_n)\big)^\frac{1}{2},
    \end{equation}
where 
\begin{equation}
    \label{C_3}
C_3 := \frac{1}{\beta^*} \cP\big(\varrho \|\vv_h^{0}\|_{L^2(\Om)} + 2\alpha  \|\vu_h^{0}\|_{L^2(\Om)}\big),
 \end{equation}
 \begin{equation}
    \label{C_4}
C_4 := \frac{\cP}{\beta^*}\Big( \alpha \cP \cK \big( \frac{E_1\varrho }{2 \gamma_1}\big)^\frac{1}{2} + \varrho\big(1 + \frac{T}{\varrho}( \frac{\nu}{\cP^2} + \frac{\alpha}{2})\big)^\frac{1}{2} \Big).
\end{equation}

The next proposition gives a pressure estimate.

\begin{prop}
\label{pro:bddpres}
Under the assumptions of Theorem \ref{thm:bddvh}, $p_h$ given in \eqref{eq:funcp_h} satisfies $p_h\in W^{-1, \infty}(0,T;L^2_0(\Om))$ with
\begin{equation}
\label{eq:bddp_h(t)}
\|p_h\|_{W^{-1, \infty}(0,T;L^2(\Om))} \le 3\Big( C_3 + C_4 \big( C_1 {\rm exp} C_2T)\big)^\frac{1}{2}\Big).
\end{equation}
\end{prop}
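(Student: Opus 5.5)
The plan is to use duality: $p_h$ is identified with the functional $\varphi\mapsto\int_0^T (p_h(t),\varphi(t))\,\nd t$ on $W^{1,1}_0(0,T;L^2_0(\Om))$. We integrate by parts in time against the primitive $P_h(t):=\int_0^t p_h(s)\,\nd s$ and then bound $\sup_t\|P_h(t)\|_{L^2(\Om)}$ using \eqref{eq:|Ph|}. Since $p_h$ is a step function with values in $Q_h\subset L^2_0(\Om)$, $P_h$ is continuous and piecewise linear in time, with $P_h(0)=0$ and $P_h(t_n)=P_h^n$ as in \eqref{e:Phn}. For $t\in[t_{n-1},t_n]$ it is a convex combination of $P_h^{n-1}$ and $P_h^n$, so
\[
\|P_h(t)\|_{L^2(\Om)}\le \max_{0\le n\le N}\|P_h^n\|_{L^2(\Om)}\le C_3 + C_4\big(C_1\exp(C_2T)\big)^{\frac12},
\]
using \eqref{eq:|Ph|} and the monotonicity of $t_n\mapsto \exp(C_2t_n)$.

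Next, take $\varphi\in W^{1,1}(0,T;L^2_0(\Om))$. Since $P_h\in W^{1,\infty}(0,T;L^2_0(\Om))$ with $\partial_t P_h=p_h$ a.e., integration by parts gives
\[
\int_0^T (p_h,\varphi)\,\nd t = (P_h(T),\varphi(T)) - \int_0^T (P_h,\partial_t\varphi)\,\nd t .
\]
The integral is bounded by $\sup_t\|P_h(t)\|_{L^2(\Om)}\,\|\partial_t\varphi\|_{L^1(0,T;L^2(\Om))}$. For test functions vanishing at $t=T$, the boundary term drops. Otherwise, we bound $\|\varphi(T)\|_{L^2(\Om)}$ by the embedding $W^{1,1}(0,T;L^2)\subset\cC^0$, namely $\|\varphi(T)\|\le \frac1T\|\varphi\|_{L^1}+\|\partial_t\varphi\|_{L^1}$ (or a comparable estimate depending on the normalization of the $W^{1,1}$ norm). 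Either way,
\[
\Big|\int_0^T (p_h,\varphi)\,\nd t\Big|\le c\,\sup_t\|P_h(t)\|_{L^2(\Om)}\,\|\varphi\|_{W^{1,1}(0,T;L^2(\Om))}
\]
with a modest numerical constant $c$. Taking the supremum over $\varphi$ gives \eqref{eq:bddp_h(t)}.

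The main obstacle is only bookkeeping: matching the constant $3$ in \eqref{eq:bddp_h(t)} to the exact norm convention on $W^{1,1}$ and accounting for the boundary term $(P_h(T),\varphi(T))$. I would first try the vanishing-endpoint test functions, where the constant is $1$. If the full $W^{1,1}$ dual is intended, I would absorb the trace bound into the factor $3$, assuming a normalization such as $T\le1$ or a scaled norm. In both cases the substantive content is the uniform bound on $P_h^n$ from \eqref{eq:|Ph|}, which is already available.
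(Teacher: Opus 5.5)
Your proof is correct. It reaches the bound by a different and somewhat cleaner route than the paper, and in fact gives the constant $1$ instead of $3$.

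The paper never introduces the continuous primitive. It proceeds as follows:
\begin{itemize}
\item It splits $\int_0^T(p_h,\varphi)$ into $\sum_i\int_{t_{i-1}}^{t_i}(p_h^i,\varphi(t)-\varphi(t_i))$ and $\sum_i\Delta t\,(p_h^i,\varphi(t_i))$.
\item It substitutes $\Delta t\,p_h^i=P_h^i-P_h^{i-1}$ and performs a discrete Abel summation in each piece.
\item It bounds $m_i(\varphi)-\varphi(t_i)$ and $\varphi(t_i)-\varphi(t_{i+1})$ by $\|\pdd_t\varphi\|_{L^1}$ over the corresponding subinterval. Each subinterval is charged up to three times (weight $2\|P_h^i\|+\|P_h^{i+1}\|$), which is where the factor $3$ comes from.
\end{itemize}

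Your key observation is that $\int_0^t p_h$ is exactly the piecewise linear interpolant of the nodal values $P_h^n$. By convexity its supremum is the nodal maximum, so a single continuous integration by parts (with $P_h(0)=0$ and $\varphi(T)=0$) suffices. This is shorter and sharper. The paper's version has the minor advantage of working only with finite sums of the nodal quantities $P_h^n$, which is the form it reuses verbatim for the pressure error at the end of the a priori analysis.

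Drop your hedge about the dual of the full $W^{1,1}$ space. As written, absorbing the boundary term $(P_h(T),\varphi(T))$ into the factor $3$ ``assuming $T\le 1$ or a scaled norm'' would not be a proof. It is also unnecessary: the paper defines $W^{-1,\infty}(0,T;L^2_0(\Om))$ as the dual of the space of $W^{1,1}$ functions vanishing at $t=0$ and $t=T$, and its own proof explicitly uses $\varphi(T)=0$. The vanishing-endpoint case you treat first is therefore the entire statement.
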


\begin{proof}
Let $\varphi$ be a function in $W^{1, 1}_0(0,T;L^2_0(\Om))$; then
\begin{align*}
\int_0^{T} \int_\Om p_h(t) \varphi(t) = \sum_{i=1}^N \int_{t_{i-1}}^{t_i} \int_\Om p_h^i \varphi(t) &= \sum_{i=1}^N \int_{t_{i-1}}^{t_i} \int_\Om p_h^i (\varphi(t) - \varphi(t_i))\\
&+ \sum_{i=1}^N \int_{t_{i-1}}^{t_i} \int_\Om p_h^i \varphi(t_i).
\end{align*}
Let $m_i(\varphi)$ denote the average of $\varphi$ over the interval $]t_{i-1}, t_i[$,
\begin{equation}
\label{eq:averag}
m_i(\varphi) = \frac{1}{\Delta \,t} \int_{t_{i-1}}^{t_i} \varphi(t).
\end{equation}
Then
\begin{align*}
\sum_{i=1}^N &\int_{t_{i-1}}^{t_i} \int_\Om p_h^i (\varphi(t) - \varphi(t_i)) = \sum_{i=1}^N \int_\Om  \Delta \,t p_h^i \big(m_i(\varphi) - \varphi(t_i)\big)\\
&= \sum_{i=1}^N \int_\Om \big(P_h^i - P_h^{i-1}\big)\big(m_i(\varphi) - \varphi(t_i)\big)\\
& = \sum_{i=1}^{N-1}\int_\Om P_h^i\big( m_i(\varphi) - \varphi(t_i) - (m_{i+1}(\varphi) -\varphi(t_{i+1})\big) + \int_\Om P_h^N \big( m_N(\varphi) - \varphi(T)\big).
\end{align*}
Similarly,
$$
\sum_{i=1}^N \int_{t_{i-1}}^{t_i} \int_\Om p_h^i \varphi(t_i) = \sum_{i=1}^N \int_\Om\big(P_h^i - P_h^{i-1}\big)\varphi(t_i)
 = \sum_{i=1}^{N-1}\int_\Om P_h^i\big( \varphi(t_i) -\varphi(t_{i+1})\big), 
$$
where this time we have also used that $\varphi(T) =0 $. 
Thus,
\begin{equation}
    \label{eq:ph.phi}
    \begin{split}
\int_0^{T} \int_\Om p_h(t) \varphi(t) &=  \sum_{i=1}^{N-1}  \int_\Om  P_h^i\big( m_i(\varphi) - m_{i+1}(\varphi) - (\varphi(t_i) -\varphi(t_{i+1})\big)\\
&+ \int_\Om P_h^N \big( m_N(\varphi) - \varphi(T)\big)+ \sum_{i=1}^{N-1}\int_\Om P_h^i\big( \varphi(t_i) -\varphi(t_{i+1})\big).
 \end{split}
\end{equation}
But
\begin{equation}
    \label{eq:mif-f}
m_i(\varphi) - \varphi(t_i) = - \frac{1}{\Delta\,t} \int_{t_{i-1}}^{t_i} \int_{t}^{t_i} \pdd_t\varphi(s)\, \nd s\, \nd t.
\end{equation}
Hence
\begin{equation}
    \label{eq:mi-phi}
 \big|\int_\Om   P_h^i\big( m_i(\varphi)- \varphi(t_i)\big)\big| \le \| P_h^i\|_{L^2(\Om)} \|\pdd_t\varphi \|_{L^1(t_{i-1},t_i; L^2(\Om))}.
\end{equation}
By applying \eqref{eq:mi-phi} in \eqref{eq:ph.phi}, we arrive at
\begin{align*}
&\big| \int_0^{T} \int_\Om p_h(t) \varphi(t) \big| \le \sum_{i=1}^{N-1} \| P_h^i\|_{L^2(\Om)}\Big( \|\pdd_t\varphi \|_{L^1(t_{i-1},t_i; {L^2(\Om)})} + 
\|\pdd_t\varphi \|_{L^1(t_{i},t_{i+1}; L^2_0(\Om))}\Big)\\
&+ \|P_h^N\|_{{L^2(\Om)}}\|\pdd_t\varphi \|_{L^1(t_{N-1},t_{N}; {L^2(\Om)})}
 + \sum_{i=1}^{N-1} \| P_h^i\|_{{L^2(\Om)}} \|\pdd_t\varphi \|_{L^1(t_{i},t_{i+1}; {L^2(\Om)})}\\
& \le 2 \sum_{i=1}^{N-1} \| P_h^i\|_{{L^2(\Om)}} \|\pdd_t\varphi \|_{L^1(t_{i},t_{i+1}; {L^2(\Om)})} + \sum_{i=1}^{N} \| P_h^i\|_{{L^2(\Om)}} \|\pdd_t\varphi \|_{L^1(t_{i-1},t_i; {L^2(\Om)})}.
\end{align*}
Then \eqref{eq:bddp_h(t)} follows by substituting \eqref{eq:|Ph|} into this inequality.
    \end{proof}

%--------------------------------
\section{A priori error estimates}
 \label{sec:Errorbouds}

This section is written with the assumption that Problem \eqref{variational} has a solution that satisfies \eqref{additional_reg}, {\em but not necessarily small}. Also, although the theory is developed for an arbitrary space discretization, for the sake of simplicity, the error estimates are written for a space approximation of order one. 
Our aim is to derive error estimates for the fluid velocity, the solid displacement, and the fluid pressure as stated in the next result.  For simplicity, we assume that the finite element spaces are first order, i.e.,
\begin{equation}\label{e:ass_error}
  \inf_{\vz_h \in \vU_h} \|\vz - \vz_h\|_{H^1(\Omega)}
  + \inf_{\vw_h \in \vX_h} \| \vw-\vw_h\|_{H^1(\Omega)} + 
 \inf_{q_h \in Q_h} \| q - q_h\|_{L^2(\Omega)}
 \in\mathcal{O} (h),
\end{equation}
for all $(\vw,\vz,q)\in H_0^1(\Omega)^d\times H_0^1(\Omega)^d \times L_0^2(\Omega)$.
 
\begin{thm}
\label{thm:velerr}
Assume that the exact solution satisfies the smoothness assumption \eqref{additional_reg}. 
Let $\bar \lambda>0$ be as in \eqref{e:small_trace} and assume that $|\lambda_1|, |\lambda_2| \leq \bar \lambda$ are sufficiently small so that \eqref{e:gam12}, \eqref{eq:gam56.1},  and \eqref{eq:gam78.1} hold. Furthermore, assume that discretization parameters satisfy $h^2 \leq c \Delta\,t$ for an absolute positive constant $c$ and that  $\Delta \,t$, $|\lambda_1|$, and $|\lambda_2|$ are sufficiently small.
Then, there are positive constants $c_1$, $c_2$ and $c_3$ independent of $\Delta \, t$ and $h$ such that for all $1\leq n \leq N$
\begin{equation}
\label{eq:3errvfl}
\|\vv(t_n)-\vv_h^n\|^2_{L^2(\Om)}   + \Delta \,t  \sum_{i=1}^n \|\vv(t_i)-\vv_h^i\|^2_{H^1(\Om)} \le c_1\big(h^2 + (\Delta \,t)^2\big) e^{c_2 t_n},
\end{equation}
\begin{equation}
\label{eq:2err.disp}
\begin{split}
 \|\vu(t_n)-\vu_h^n\|^2_{L^2(\Om)} & +  \Delta\,t \sum_{i=1}^n \left(  \|\veps(\vu(t_i)-\vu_h^i)\|^2_{L^2(\Om)} +  \|\di\,(\vu(t_i)-\vu_h^i)\|^2_{L^2(\Om)}\right)
\\
&\le c_1 \big(h^2 + (\Delta \,t)^2\big) \big(
e^{c_2 t_n} + c_3\big),
\end{split}
\end{equation}
and
\begin{equation}
    \label{eq:}
\|p-p_h\|_{W^{-1,\infty}(0,T;{L^2(\Om)})} \le  
c_1 \left(h + (\Delta \,t)\right)\left( e^{c_2 T} +  c_3\right),
\end{equation}
where $p_h(t) = \sum_{i=1}^N p_h^i \chi_{]t_{i-1},t_i]}$ and $\chi_A$ is the characteristic function of $A$.
\end{thm}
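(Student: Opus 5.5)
We follow the standard route for a priori estimates: split each error into an interpolation part and a discrete part, then mimic the stability argument of Theorem~\ref{thm:bddvh} and Corollary~\ref{cor:bdduhn} on the discrete part.

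\textbf{Splitting.} Let $\vu^n:=\vu(t_n)$, $\vv^n:=\vv(t_n)$, $p^n:=p(t_n)$. We write
\[
\vu^n-\vu_h^n=(\vu^n-R_h\vu^n)+(R_h\vu^n-\vu_h^n)=:\eta_u^n+\ve_u^n,
\qquad
\vv^n-\vv_h^n=(\vv^n-S_h\vv^n)+(S_h\vv^n-\vv_h^n)=:\eta_v^n+\ve_v^n.
\]
Here $R_h$ is an approximation into $\vU_h$ (a Scott--Zhang or elliptic-type projection) and $S_h$ is a Stokes-type projection into $\vV_h$. By \eqref{e:ass_error} and \eqref{additional_reg}, $\eta_u,\eta_v,\pdd_t\eta_u,\pdd_t\eta_v$ are $O(h)$ in $H^1$.

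\textbf{Error equations.} We evaluate \eqref{eq:varbal2bis} and \eqref{eq:varbal3} at $t_n$ and subtract \eqref{eq:displachn}--\eqref{eq:fluidhn}. Two kinds of consistency residuals appear.

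\emph{Time residuals.} These are $\pdd_t\vu(t_n)-\frac{1}{\Delta t}(\vu^n-\vu^{n-1})$ and the analogue for $\vv$. They are $O(\Delta t)$ in $L^2(t_{n-1},t_n;H^{-1})$ by Taylor's formula, using $\pdd_{tt}\vu\in L^2(H^{-1})$ and the $\vv$-regularity. In the fluid equation the pressure term becomes $(p^n-q_h,\di\vw_h)$ for $\vw_h\in\vV_h$, which is $O(h)$ since $p\in L^\infty(H^1)$.

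\emph{Lagged coefficient residual.} The scheme uses coefficients $B_1(\vu_h^{n-1})$, $B_2(\vu_h^{n-1})$, while the exact equation uses $B_i(\vu^n)$. We decompose
\[
B_i(\vu^n)-B_i(\vu_h^{n-1})=\big[B_i(\vu^n)-B_i(\vu^{n-1})\big]+\big[B_i(\vu^{n-1})-B_i(\vu_h^{n-1})\big].
\]
The first bracket is $O(\Delta t)$ in $L^\infty$ only if $\pdd_t\di\vu$ is bounded. Since we only have $L^2(H^1)$, we instead keep $\veps(\vu^n)\in L^\infty(L^4)$-type control from $\vu\in L^\infty(H^2)$, and bound $\|B_i(\vu^n)-B_i(\vu^{n-1})\|_{L^4}\lesssim|\lambda|\,\|\di(\vu^n-\vu^{n-1})\|_{L^4}$. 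For the second bracket, the truncation is essential. By Lipschitz continuity of the maps $s\mapsto 1/(1+\lambda_1 s)$ and the like on $[-\delta,\delta]$ (bounds via $\gamma_1,\dots,\gamma_8$), together with \eqref{eq:difTdelt}, we get
\[
|B_i(\vu^{n-1})-B_i(\vu_h^{n-1})|\le C(|\lambda_1|+|\lambda_2|)\min\big(|\di(\vu^{n-1}-\vu_h^{n-1})|,2\delta\big).
\]
This difference multiplies $\veps(\vu^n)$, which lies in $L^\infty(L^4)$ or better, or is handled through the $L^\infty$ bound $2\delta$. Hence its contribution is bounded by
\[
C(|\lambda_1|+|\lambda_2|)\,\|\di\,\ve_u^{n-1}+\di\,\eta_u^{n-1}\|_{L^2}\,\|\veps(\vu^n)\|_{L^\infty}\,\|\veps(\ve_u^n)\|_{L^2}.
\]
If $\veps(\vu)\in L^\infty$ is unavailable, we use $H^2\subset W^{1,4}$ (recall $d\le3$) and an inverse or $L^4$ argument. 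Because $|\lambda_i|$ is small, this term is absorbed in the coercive term $\frac{\gamma_1}{E_1}d_2(\ve_u^{n})$, summed over $n$ (with an index shift).

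\textbf{Energy argument.} We test the displacement error equation with $\ve_u^n$ and the fluid error equation with $\ve_v^n\in\vV_h$. We then repeat exactly the chain \eqref{eq:uhn1}--\eqref{eq:vhn4}: the discrete $H^{-1}$ estimate of Proposition~\ref{pro:duh-1} applied to $\ve_u$ (which is where $h^2\le c\Delta t$ enters), followed by the discrete Gronwall lemma under the step restriction \eqref{eq:delt.t}. This yields \eqref{eq:3errvfl} for $\ve_v$, and by substitution \eqref{eq:2err.disp} for $\ve_u$. The constant $c_3$ collects the $n$-independent initial and consistency contributions. The triangle inequality with the bounds on $\eta$ finishes both estimates.

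\textbf{Pressure.} We sum the fluid error equation in time as in \eqref{eq:Phn}, bound $\sum_{i\le n}\Delta t\,(p^i-p_h^i)$ in $L^2$ via the discrete inf-sup \eqref{e:discrete_infsup} and the velocity and displacement errors (which gives a square root, hence order $h+\Delta t$), and then argue exactly as in Proposition~\ref{pro:bddpres} to pass to the $W^{-1,\infty}$ norm.

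\textbf{Main obstacle.} The hard step is the lagged nonlinear coefficient term. It must be controlled using only \eqref{e:small_trace}, the truncation inequality \eqref{eq:difTdelt}, and smallness of $\lambda_i$ so that it can be absorbed. The difficulty is to achieve this without an $L^\infty$ bound on $\veps(\vu)$. If absorption fails, our fallback is to assume additionally $\veps(\vu)\in L^\infty(Q_T)$.
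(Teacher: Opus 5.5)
Your proposal is correct and follows essentially the same route as the paper. Both split off interpolants and write consistency residuals for the difference quotients and for the lagged, truncated coefficients (the paper's $R_{1,u}^n,\dots,R_{4,u}^n$). Both control the coefficient differences through \eqref{eq:difTdelt} and absorb them by smallness of $|\lambda_1|,|\lambda_2|$, bound $\ve_u^n-\ve_u^{n-1}$ in $H^{-1}$ using $h^2\lesssim\Delta t$, and close with Gronwall for $\ve_v$. The pressure is then handled through the time-averaged pressure, the discrete inf-sup condition, and the argument of Proposition~\ref{pro:bddpres}.

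Your fallback assumption $\veps(\vu)\in L^\infty(Q_T)$ is exactly what the paper uses too. The paper claims it follows from \eqref{additional_reg}, but $L^\infty(0,T;H^2(\Om)^d)$ does not imply it for $d=2,3$. The assumption is genuinely needed, because your $L^4$/inverse variant does not close, for three reasons:
\begin{itemize}
\item truncation only gives $\|\min(|\di\,\ve_u^{n-1}|,2\delta)\|_{L^4(\Om)}\le(2\delta)^{1/2}\|\di\,\ve_u^{n-1}\|_{L^2(\Om)}^{1/2}$, a fractional power;
\item an inverse estimate costs a factor $h^{-d/4}$ that smallness of $|\lambda_i|$ cannot absorb uniformly in $h$;
\item your $L^4$ bound on $B_i(\vu^n)-B_i(\vu^{n-1})$ would require $\pdd_t\vu\in L^2(0,T;W^{1,4}(\Om)^d)$, which is not assumed.
\end{itemize}
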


We first note that in view of the assumed regularity \eqref{additional_reg} on the solution, it suffices to prove the above estimate for the discrete errors $\ve_v^n:= \Pi_h \vv(t_n)-\vv_h^n$, $\ve_u^n:= \Pi_h \vu(t_n)-\vu_h^n$, and $e_p^n:= \Pi_h p(t_n)-p_h$.
To achieved this, we proceed in several steps. In Section~\ref{subsec:error.eq}, we obtain consistency relations satisfied by the discrete errors. The resulting consistency terms are then estimated in \ref{subsec:bddconsist}.
We then proceed in Sections~\ref{subsec:1stbdddispl} and~\ref{subsec:1srbdd.dtu} by deriving estimates for the displacement discrete error $\ve_u^n$ and its discrete time derivative involving the fluid velocity. In Section~\ref{subsec:err.bddvel.fl}, we obtain estimates for the fluid velocity discrete error $\ve_v^n$ involving the displacement and deduce estimates \eqref{eq:3errvfl} and \eqref{eq:2err.disp}.
For the pressure error, a consistency relation for the average pressure is first obtained in Section~\ref{subsec:err.press} and the consistency terms are estimated in Section~\ref{subsec:bddcons.press}. Estimate \eqref{eq:} for the fluid pressure error is then derived in Section~\ref{subsec:err.aver.press}.

In this section we do not track the precise dependency on the physical constants. Instead, we denote by $c$, $c_\lambda$ generic positive constant independent of $h$ and $n$, with $c_\lambda$ monotone decreasing in $|\lambda_1|$ and $|\lambda_2|$ and 
$$
\lim_{|\lambda_1|+|\lambda_2|\to 0} c_\lambda = 0.
$$
We also use the notation $A \lesssim B$ to indicate $A \leq c B$. The value of $c$ and $c_{\lambda}$ might change from one line to another.

\subsection{Consistency}
\label{subsec:error.eq}

Let us start by deriving a consistency relation for the displacement. Assuming that all terms below are continuous in time, the exact solution at time $t_n$ satisfies
\begin{align*}
&\alpha \langle \pdd_t \vu(t_n), \vz_h\rangle 
 + \frac{1}{E_1}\int_\Om \frac{\veps( \vu(t_n)): \veps( \vz_h)}{1+ \lambda_1T_{\delta} \di\, \vu(t_n)}  \\
& + \frac{|E_2|}{E_1} \int_\Om \frac{ 1+ \lambda_2T_{\delta} \di\, \vu(t_n)}{( 1+ \lambda_1T_{\delta} \di\, \vu(t_n))}
\frac{(\di\,\vu(t_n)) (\di\,\vz_h)}{F(T_{\delta} \di\, \vu(t_n))}
   = \alpha \int_\Om  \vv(t_n) \cdot \vz_h    
\end{align*}
This can also be written as
\begin{align*}
&\frac{\alpha}{\Delta\,t} \int_\Om \big(\vu(t_n) - \vu(t_{n-1})\big)\cdot \vz_h 
 + \frac{1}{E_1}\int_\Om \frac{\veps( \vu(t_n)): \veps( \vz_h)}{1+ \lambda_1T_{\delta} \di\, \vu(t_{n-1})}  \\
& + \frac{|E_2|}{E_1} \int_\Om \frac{ 1+ \lambda_2T_{\delta} \di\, \vu(t_{n-1})}{( 1+ \lambda_1T_{\delta} \di\, \vu(t_{n-1}))}
\frac{(\di\,\vu(t_n)) (\di\,\vz_h)}{F(T_{\delta} \di\, \vu(t_{n-1}))}
  \\
& 
 = \alpha \int_\Om  \vv(t_n) \cdot \vz_h + R_{1,u}^n(\vz_h)    
\end{align*}
with the first consistency term 
\begin{equation}
\label{eq:R1u}
\begin{split}
&R_{1,u}^n(\vz_h) := \alpha \langle \frac{1}{\Delta\,t}\big(\vu(t_n) - \vu(t_{n-1})\big) -  \pdd_t \vu(t_n), \vz_h\rangle\\
& +  \frac{1}{E_1}\int_\Om \veps( \vu(t_n)): \veps( \vz_h)\big( \frac{1}{1+ \lambda_1T_{\delta} \di\, \vu(t_{n-1})} - \frac{1}{1+ \lambda_1T_{\delta} \di\, \vu(t_{n})}\big)\\
&+  \frac{|E_2|}{E_1} \int_\Om (\di\,\vu(t_n)) (\di\,\vz_h) \Big(
\frac{ 1+ \lambda_2T_{\delta} \di\, \vu(t_{n-1})}{( 1+ \lambda_1T_{\delta} \di\, \vu(t_{n-1}))}
\frac{1}{F(T_{\delta}}\di\, \vu(t_{n-1}))\\
&- \frac{ 1+ \lambda_2T_{\delta} \di\, \vu(t_{n})}{( 1+ \lambda_1T_{\delta} \di\, \vu(t_{n}))}
\frac{1}{F( T_{\delta} \di\, \vu(t_{n}))}\Big).
\end{split}
\end{equation}
By inserting interpolants, we obtain a suitable consistency equality for the displacement,
\begin{equation}
\label{eq:consist.equ}
\begin{split}
&\frac{\alpha}{\Delta\,t} \int_\Om \big(\Pi_h(\vu(t_n) - \vu(t_{n-1}))\big)\cdot \vz_h 
 + \frac{1}{E_1}\int_\Om \frac{\veps( \Pi_h\vu(t_n))): \veps( \vz_h)}{1+ \lambda_1T_{\delta} \di\, \Pi_h\vu(t_{n-1})}  \\
& + \frac{|E_2|}{E_1} \int_\Om \frac{ 1+ \lambda_2T_{\delta} \di\, \Pi_h \vu(t_{n-1}))}{( 1+ \lambda_1T_{\delta} \di\, \Pi_h\vu(t_{n-1}))}
\frac{(\di\,\Pi_h\vu(t_n)) (\di\,\vz_h)}{F(T_{\delta} \di\, \Pi_h\vu(t_{n-1}))}
  \\
& 
 = \alpha \int_\Om  \Pi_h\vv(t_n) \cdot \vz_h + R_{1,u}^n(\vz_h) + R_{2,u}^n(\vz_h),
\end{split}
 \end{equation}
with the second consistency term
\begin{equation}
\label{eq:R2u}
\begin{split}
&R_{2,u}^n(\vz_h) := \frac{\alpha}{\Delta\,t} \int_\Om \big(\Pi_h(\vu(t_n)-\vu(t_{n-1}))-(\vu(t_n) - \vu(t_{n-1}))\big)\cdot  \vz_h\\\
& +  \frac{1}{E_1}\int_\Om \Big(\frac{\veps( \Pi_h\vu(t_n)): \veps( \vz_h)} {1+ \lambda_1T_{\delta} \di\, \Pi_h\vu(t_{n-1})}- \frac{\veps( \vu(t_n)): \veps( \vz_h)} {1+ \lambda_1T_{\delta} \di\, \vu(t_{n-1})}\Big)\\
&+  \frac{|E_2|}{E_1}\Big[\int_\Om 
\frac{ 1+ \lambda_2T_{\delta} \di\, \Pi_h \vu(t_{n-1})}{( 1+ \lambda_1T_{\delta} \di\, \Pi_h\vu(t_{n-1}))}
\frac{(\di\,\Pi_h\vu(t_n)) (\di\,\vz_h)}{F(T_{\delta} \di\, \Pi_h\vu(t_{n-1}))}\\
&-\int_\Om  
\frac{ 1+ \lambda_2T_{\delta} \di\, \vu(t_{n-1})}{( 1+ \lambda_1T_{\delta} \di\, \vu(t_{n-1}))}
\frac{(\di\,\vu(t_n)) (\di\,\vz_h)}{F(T_{\delta} \di\, \vu(t_{n-1}))}\Big]\\
& + \alpha \int_\Om  \big(\vv(t_n)-\Pi_h\vv(t_n)\big) \cdot \vz_h.
\end{split}
\end{equation}

For the fluid variables, assuming again that all terms below are continuous in time, the exact solution at time $t_n$ satisfies for all $\vw_h \in \vV_h$,
\begin{align*}
\varrho\langle \pdd_t \vv(t_n), \vw_h \rangle -
  \int_\Om p(t_n)\di\,\vw_h + \nu \int_\Om \nabla\, \vv(t_n) : \nabla\,
\vw_h & + \alpha \int_\Om \vv(t_n) \cdot \vw_h \\
&
= \alpha \langle \pdd_t\vu(t_n),\vw_h \rangle,
\end{align*}
that we first write for any $q_h$ in $Q_h$ as
\begin{align*}
  \frac{\varrho}{\Delta\,t}\int_\Om (\vv(t_n)- \vv(t_{n-1}))\cdot \vw_h
+ \nu &\int_\Om \nabla\, \vv(t_n) : \nabla\, 
\vw_h    + \alpha \int_\Om \vv(t_n) \cdot \vw_h \\
&= \frac{\alpha}{\Delta\,t}\int_\Om (\vu(t_n)- \vu(t_{n-1}))\cdot\vw_h + R_{1,v}^n(\vw_h,q_h),
\end{align*}
where the first consistency term is
\begin{equation}
\label{eq:R1v}
\begin{split}
R_{1,v}^n(\vw_h,q_h) &:=  \int_\Om \big(p(t_n)-q_h \big)\di\,\vw_h + \langle \frac{\varrho}{\Delta\,t}(\vv(t_n)- \vv(t_{n-1})) - \pdd_t \vv(t_n), \vw_h\rangle  \\
& -\alpha\langle \frac{1}{\Delta\,t}(\vu(t_n)- \vu(t_{n-1}))- \pdd_t \vu(t_n), \vw_h\rangle.
\end{split}
\end{equation}
Inserting interpolants again, we obtain a suitable consistency equality for the fluid velocity,
\begin{equation}
\label{eq:consist.eqv}
\begin{split}
\frac{\varrho}{\Delta\,t}&\int_\Om (\Pi_h(\vv(t_n)-\vv(t_{n-1}))\cdot \vw_h
+ \nu \int_\Om \nabla\, \Pi_h\vv(t_n) : \nabla\, 
\vw_h    + \alpha \int_\Om \Pi_h\vv(t_n) \cdot \vw_h \\
&= \frac{\alpha}{\Delta\,t}\int_\Om (\Pi_h(\vu(t_n) - \vu(t_{n-1}))\cdot\vw_h + R_{1,v}^n(\vw_h,q_h) + R_{2,v}^n(\vw_h),
\end{split}
\end{equation}
with the second consistency term
\begin{equation}
\label{eq:R2v}
\begin{split}
R_{2,v}^n(\vw_h) &:= \frac{\varrho}{\Delta\,t}\int_\Om \big(\Pi_h(\vv(t_n) -\vv(t_{n-1})) - (\vv(t_n)- \vv(t_{n-1})) \cdot \vw_h\\
& + \nu \int_\Om \nabla( \Pi_h\vv(t_n)-\vv(t_n))  : \nabla\, 
\vw_h  + \alpha \int_\Om (\Pi_h\vv(t_n) - \vv(t_n)) \cdot \vw_h\\
& +\frac{\alpha}{\Delta\,t}\int_\Om (\vu(t_n)- \vu(t_{n-1}) - (\Pi_h(\vu(t_n)- \vu(t_{n-1})) )\cdot\vw_h. 
\end{split}
\end{equation}

The following preliminary error equality for the displacement stems immediately from \eqref{eq:displachn} and \eqref{eq:consist.equ}: for all $\vz_h$ in $\vU_h$,
\begin{align*}
&\frac{\alpha}{\Delta\,t} \int_\Om \big(\ve_u^n - \ve_u^{n-1}\big)\cdot \vz_h\\
&+ \frac{1}{E_1}\int_\Om \Big(\frac{\veps( \Pi_h\vu(t_n))}{1+ \lambda_1T_{\delta} \di\, \Pi_h\vu(t_{n-1})} -  \frac{\veps( \vu_h^n)}{1+ \lambda_1T_{\delta} \di\, \vu_h^{n-1}}\Big) : \veps( \vz_h) \\
& + \frac{|E_2|}{E_1} \int_\Om \Big[\frac{ 1+ \lambda_2T_{\delta} \di\, \Pi_h\vu(t_{n-1})}{( 1+ \lambda_1T_{\delta} \di\, \Pi_h\vu(t_{n-1}))}\frac{\di\,\Pi_h(\vu(t_n))}{F(T_{\delta} \di\, \Pi_h\vu(t_{n-1}))}\\
&- 
\frac{ 1+ \lambda_2T_{\delta} \di\, \vu_h^{n-1}}{( 1+ \lambda_1T_{\delta} \di\, \vu_h^{n-1})} \frac{\di\,\vu_h^n}{F(T_{\delta} \di\, \vu_h^{n-1})}\Big](\di\,\vz_h) \\
& 
 = \alpha \int_\Om  \ve_v^n \cdot \vz_h + R_{1,u}^n(\vz_h) + R_{2,u}^n(\vz_h),
\end{align*}
 where $R_{1,u}^n(\vz_h)$ and $R_{2,u}^n(\vz_h)$ are given respectively by \eqref{eq:R1u} and \eqref{eq:R2u}.

Next, we rewrite the non-linearities in terms of $\ve_u^n$. 
The first non-linear term is written
$$ \frac{1}{E_1}\int_\Om \frac{\veps(\ve_u^n) : \veps( \vz_h)}{1+ \lambda_1T_{\delta} \di\, \vu_h^{n-1}} + R_{3,u}^n(\vz_h), 
 $$
 where 
\begin{equation}
\label{eq:R3u}
R_{3,u}^n(\vz_h) := \frac{\lambda_1}{E_1}\int_\Om \frac{\veps( \Pi_h\vu(t_n)):\veps( \vz_h)\big(T_{\delta} \di\, \vu_h^{n-1} - T_{\delta} \di\, \Pi_h\vu(t_{n-1})\big) }{(1+ \lambda_1T_{\delta} \di\, \Pi_h\vu(t_{n-1})) (1+ \lambda_1T_{\delta} \di\, \vu_h^{n-1})}.
\end{equation}
The second non-linear term is written 
$$ \frac{|E_2|}{E_1} \int_\Om \frac{( \di\,\ve_u^n) (\di\,\vz_h) (1+ \lambda_2T_{\delta} \di\, \vu_h^{n-1})}{(1+ \lambda_1T_{\delta} \di\, \vu_h^{n-1})F(T_{\delta} \di\,\vu_h^{n-1})} +  R_{4,u}^n(\vz_h), 
 $$
where, after collecting terms, and denoting by $P_{\rm od} $ the product
$$
P_{\rm od} := (1+ \lambda_1T_{\delta} \di\, \vu_h^{n-1}) (1+ \lambda_1T_{\delta} \di \Pi_h\vu(t_{n-1}))F(T_{\delta} \di\,\vu_h^{n-1}) F(T_{\delta} \di\Pi_h\vu(t_{n-1})),   
$$
\begin{equation}
\label{eq:R4u}
\begin{split}
&R_{4,u}^n(\vz_h) := \frac{|E_2|}{E_1}\int_\Om \big(\di\,\Pi_h\vu(t_{n})\big)(\di\,\vz_h) \big(T_{\delta} \di\, \Pi_h\vu(t_{n-1})- T_{\delta} \di\, \vu_h^{n-1}\big)\\
& \times\Big[\frac{\lambda_2} {(1+ \lambda_1T_{\delta}\di\, \Pi_h\vu(t_{n-1}))F(T_{\delta}\di\, \Pi_h\vu(t_{n-1}))}\\
& - \frac{1}{P_{\rm od}}(1+ \lambda_2T_{\delta} \di\, \vu_h^{n-1}) \big(\gamma_4 
(1+ \lambda_1T_{\delta} \di\, \vu_h^{n-1}) + 
\lambda_1 F(T_{\delta} \di \Pi_h\vu_h^{n-1})\big)\Big].
\end{split}
\end{equation}
This leads to the following error equality for the displacement,  for all $\vz_h$ in $\vU_h$:
 \begin{equation}
\label{eq:error.equ}
\begin{split}
&\frac{\alpha}{\Delta\,t} \int_\Om \big(\ve_u^n - \ve_u^{n-1}\big)\cdot \vz_h + \frac{1}{E_1}\int_\Om \frac{\veps(\ve_u^n) : \veps( \vz_h)}{1+ \lambda_1T_{\delta} \di\, \vu_h^{n-1}}\\
& + \frac{|E_2|}{E_1}\int_\Om \frac{( \di\,\ve_u^n) (\di\,\vz_h) (1+ \lambda_2T_{\delta} \di\, \vu_h^{n-1})}{(1+ \lambda_1T_{\delta} \di\, \vu_h^{n-1})F(T_{\delta} \di\,\vu_h^{n-1})}\\
& 
 = \alpha \int_\Om  \ve_v^n \cdot \vz_h + R_{1,u}^n(\vz_h) + R_{2,u}^n(\vz_h)- R_{3,u}^n(\vz_h) - R_{4,u}^n(\vz_h).
\end{split}
 \end{equation}
 
An error equality for the fluid's velocity is simpler because it arises from a linear equation. It follows from \eqref{eq:fluidhn} and \eqref{eq:consist.eqv}: for all $\vw_h$ in $\vV_h$,
\begin{equation}
     \label{eq:erroreqv}
     \begin{split}
\frac{\varrho}{\Delta\,t}\int_\Om (\ve_v^n- \ve_v^{n-1})\cdot &\vw_h
+ \nu \int_\Om \nabla\, \ve_v^n : \nabla\, 
\vw_h    + \alpha \int_\Om \ve_v^n \cdot \vw_h \\
&= \frac{\alpha}{\Delta\,t}\int_\Om (\ve_u^n- \ve_u^{n-1})\cdot\vw_h + R_{1,v}^n(\vw_h,q_h) + R_{2,v}^n(\vw_h),
\end{split}
\end{equation}
where $R_{1,v}^n(\vw_h,q_h)$ and  $R_{2,v}^n(\vw_h)$ are given respectively by \eqref{eq:R1v} and \eqref{eq:R2v}.

\subsection{Upper bounds of the consistency terms}
\label{subsec:bddconsist}

For $R_{1,u}^n(\vz_h)$, with the help of relation \eqref{eq:difTdelt} and estimates \eqref{e:gam12}, \eqref{eq:gam56.1}, and \eqref{eq:gam78.1}, we obtain
\begin{align*}
|R_{1,u}^n(\vz_h)| &\lesssim (\Delta\,t)^{-1} \big|\langle  \int_{t_{n-1}}^{t_n} (\pdd_t\vu(s) - \pdd_t\vu(t_n)) \nd s, \vz_h\rangle\big| \\
& + c_\lambda\int_\Om |\veps( \vu(t_n))|\, |\veps( \vz_h)| \big|T_{2\delta} \di ( \vu(t_{n}) - \vu(t_{n-1}) )\big| \\
& + c_\lambda\int_\Om |\di\, \vu(t_{n})|\,|\di\, \vz_h|
\big|T_{2\delta} \di ( \vu(t_{n}) - \vu(t_{n-1}) )\big|. 
\end{align*}
But
\begin{equation*}
    \begin{split}
\frac{1}{\Delta\,t}  \int_{t_{n-1}}^{t_n} \| \pdd_t\vu(s) - \pdd_t\vu(t_n)\|_{H^{-1}(\Omega)} \nd s  &= \frac{1}{\Delta\,t}  \int_{t_{n-1}}^{t_n} \int_s^{t_n} \| \pdd_{tt}^2\vu(t)\|_{H^{-1}(\Omega)} \nd t\, \nd s   
\\
&\lesssim (\Delta\,t)^\frac{1}{2} \| \pdd_{tt}^2\vu\|_{L^2(t_{n-1},t_n;H^{-1}(\Omega))},
\end{split}
\end{equation*}
so that in view  of $\|\di\, \vu\|_{L^\infty(Q_T)} + \|\veps( \vu)\|_{L^\infty(Q_T)} < \infty$, see regularity assumption \eqref{additional_reg}, we get
\begin{equation*}
     \label{eq:bddR1u}
     \begin{split}
|R_{1,u}^n(\vz_h)| &\lesssim (\Delta\,t)^\frac{1}{2}\| \pdd_{tt}^2\vu\|_{L^2(t_{n-1},t_n; H^{-1}(\Om))} \|\nabla\,\vz_h\|_{L^2(\Om)}\\
& +c_\lambda (\Delta\, t)^\frac{1}{2}\big( \|\veps( \vz_h)\|_{L^2(\Om)} +\|\di\, \vz_h\|_{L^2(\Om)}\big)  \|\di\,\pdd_t \vu\|_{L^2(t_{n-1},t_n; L^2(\Om))}. 
     \end{split}
     \end{equation*}
     
Regarding $R_{2,u}^n(\vz_h)$, using relation \eqref{eq:difTdelt} again,  we have 
\begin{align*}
   & |R_{2,u}^n(\vz_h)| \le  \frac{\alpha}{\Delta\,t}\int_\Om \int_{t_{n-1}}^{t_n}\big|\pdd_t(\Pi_h\vu(t) - \vu(t)) \big|\,\nd t |\vz_h| +  \alpha \int_\Om  |\Pi_h\vv(t_n)-\vv(t_n) |\, |\vz_h|\\
 &+  \frac{|\lambda_1|}{E_1}\int_\Om |\veps( \Pi_h\vu(t_n))|\,|\veps( \vz_h)|  \frac{\big|T_{2\delta}(\di( \vu(t_{n-1}) - \Pi_h\vu(t_{n-1}))\big| }
{ (1+ \lambda_1T_{\delta} \di\, \Pi_\vu(t_{n-1})))(1+ \lambda_1T_{\delta} \di\, \vu(t_{n-1}))}\\
& + \frac{1}{E_1} \int_\Om \frac{|\veps(\Pi_h\vu(t_n)) - \vu(t_n))| }
{|1+ \lambda_1T_{\delta} \di\, \vu(t_{n-1})| } |\veps( \vz_h)| \\
&+  \frac{|E_2|}{E_1}\int_\Om  \Big[|\di( \Pi_h \vu(t_{n})- \vu(t_{n}))|\frac{  |1+ \lambda_2T_{\delta} \di\, \Pi_h \vu(t_{n-1})|}{|1+ \lambda_1T_{\delta} \di\, \Pi_h \vu(t_{n-1})|} \frac{|\di\,\vz_h|}{|F(T_{\delta} \di\, \Pi_h\vu(t_{n-1}))|}\\
&+  |\di\, \vu(t_{n})| |\di\,\vz_h|\Big|\frac{1+ \lambda_2T_{\delta} \di\, \Pi_h \vu(t_{n-1})}{(1+ \lambda_1T_{\delta} \di\, \Pi_h \vu(t_{n-1}))F(T_{\delta} \di\, \Pi_h\vu(t_{n-1}))}\\
&-
\frac{1+ \lambda_2T_{\delta} \di\, \vu(t_{n-1})}{(1+ \lambda_1T_{\delta} \di\,  \vu(t_{n-1}))F(T_{\delta} \di\, \vu(t_{n-1}))}\Big|\Bigg].
\end{align*}
Thus, by collecting terms, estimates \eqref{e:gam12}, \eqref{eq:gam56.1}, and \eqref{eq:gam78.1}, the regularity assumption \eqref{additional_reg} to estimate $\|\di\, \vu\|_{L^\infty(Q_T)}$, $\|\veps( \vu)\|_{L^\infty(Q_T)}$ and $\|\nabla^2\,\vu\|_{L^{\infty}(0,T;L^2(\Omega))}$, and the  approximation properties of $\Pi_h$ imply for sufficiently small $h$
\begin{equation*}
     \label{eq:bddR2u}
     \begin{split}
|R_{2,u}^n(\vz_h)| &\lesssim  h\big(\|\nabla\,\vv\|_{L^{\infty}(0,T; L^2(\Om))}+(\Delta\,t)^{-\frac{1}{2}} \|\nabla\,\pdd_t\vu\|_{L^2(t_{n-1},t_n; L^2(\Om))} \big) \|\vz_h\|_{L^2(\Om)}\\
& + h \Big( \|\veps( \vz_h)\|_{L^2(\Om)} +  \|\di\, \vz_h\|_{L^2(\Om)}\Big).
\end{split}
\end{equation*}

For $R_{3,u}^n(\vz_h)$, we easily derive
\begin{equation*}
\label{eq:bddR3u}
\begin{split}
|R_{3,u}^n(\vz_h) | &\le \frac{|\lambda_1|}{E_1}\gamma_2^2 C \|\veps( \vu)\|_{L^\infty(Q_T)}\|\di\, \ve_u^{n-1}\|_{L^2(\Om)} \|\veps( \vz_h)\|_{L^2(\Om)}\\
& \le c_\lambda \|\di\, \ve_u^{n-1}\|_{L^2(\Om)} \|\veps( \vz_h)\|_{L^2(\Om)},
\end{split}
\end{equation*}
and for $R_{4,u}^n(\vz_h)$, again invoking the regularity assumption \eqref{additional_reg} to estimate $\|\di\, \vu\|_{L^\infty(Q_T)}$,
\begin{equation*}
\label{eq:bddR4u}
\begin{split}
|R_{4,u}^n(\vz_h) |& \le c_\lambda\|\di\, \ve_u^{n-1}\|_{L^2(\Om)} \|\di\, \vz_h\|_{L^2(\Om)}.
\end{split}
 \end{equation*} 

With similar arguments, we estimate $R_{1,v}^n(\vw_h,q_h)$
\begin{equation*}
\label{eq:bddR1v}
\begin{split}
|R_{1,v}^n(\vw_h,q_h)| & \lesssim  h \|\nabla p\|_{L^{\infty}(0,T;L^2(\Om))} \|\di\,\vw_h\|_{L^2(\Om)}\\
& +  (\Delta\,t)^{\frac{1}{2}}  \|\nabla\, \vw_h\|_{L^2(\Om)} \big(  \| \pdd_{tt}^2 \vv\|_{L^2(t_{n-1},t_n; H^{-1}(\Om))} + \|\pdd_{tt}^2 \vu\|_{L^2(t_{n-1},t_n; H^{-1}(\Om))}\big).
\end{split}
\end{equation*}
Finally, $R_{2,v}^n(\vw_h)$ satisfies the upper bound
\begin{equation*}
\label{eq:bddR2v}
\begin{split}
|R_{2,v}^n(\vw_h)| &\lesssim h  (\Delta\,t)^{-\frac{1}{2}} \|\vw_h\|_{L^2(\Om)} \big( \|\nabla\, \pdd_t\vv\|_{L^2(t_{n-1},t_n; L^2(\Om))} + \|\nabla\, \pdd_t\vu\|_{L^2(t_{n-1},t_n; L^2(\Om))}\big)\\
& +  h \big(\|\nabla\,\vv\|_{L^{\infty}(0,T; L^2(\Om))}\|\vw_h\|_{L^2(\Om)}+\|\nabla^2\,\vv\|_{L^{\infty}(0,T; L^2(\Om))}\|\nabla\,\vw_h\|_{L^2(\Om)}\big).
\end{split}
\end{equation*}

\subsection{Preliminary error estimate for the displacement}
\label{subsec:1stbdddispl}

The choice $\vz_h =\ve_u^n $ in \eqref{eq:error.equ} together with the upper bounds obtained in the previous section lead to
\begin{equation}
\label{eq:1stbddeu}
\begin{split}
\frac{1}{\Delta\,t} \big( \|\ve_u^n\|^2_{L^2(\Om)} &- \|\ve_u^{n-1}\|^2_{L^2(\Om)} + \|\ve_u^n - \ve_u^{n-1}\|^2_{L^2(\Om)}\big) +   \|\veps(\ve_u^n)\|^2_{L^2(\Om)}  + \|\di\,\ve_u^n\|^2_{L^2(\Om)}\\
&\le c (\mathcal A+ \|\ve_v^n\|_{L^2(\Om)}) \|\ve_u^n\|_{L^2(\Om)} + c (\Delta\,t)^{\frac{1}{2}}\|\pdd_{tt}^2\vu\|_{L^2(t_{n-1},t_n; H^{-1}(\Om))}\|\nabla\ve_u^n\|_{L^2(\Omega)} \\
& + c\left((\Delta\,t)^{\frac{1}{2}}\|\di\,\pdd_t \vu\|_{L^2(t_{n-1},t_n; L^2(\Om))}+h\right)\left(\|\veps(\ve_u^n)\|_{L^2(\Om)}+\|\di\,\ve_u^n\|_{L^2(\Om)}\right) \\
&+ c_{\lambda} \|\di\,\ve_u^{n-1}\|_{L^2(\Om)} \left(\|\veps(\ve_u^n)\|_{L^2(\Om)}+  \|\di\,\ve_u^n\|_{L^2(\Om)}\right),
\end{split}
\end{equation}
where
$$
\mathcal A:= h\big(\|\nabla\,\vv\|_{L^{\infty}(0,T; L^2(\Om))} + (\Delta\,t)^{-\frac{1}{2}} \|\nabla \pdd_t\vu\|_{L^2(t_{n-1},t_n; L^2(\Om))}\big).
$$

We start by estimating the contributions of the nonlinear term, i.e., the term multiplying $c_\lambda$ is controlled by
\begin{equation}
    \label{eq:bddC5}
\frac{1}{4} \left( \|\veps(\ve_u^n)\|^2_{L^2(\Om)} + \|\di\,\ve_u^n\|^2_{L^2(\Om)}\right)+ 2c_\lambda^2\|\di\,\ve_u^{n-1}\|^2_{L^2(\Om)}.
\end{equation}
For the term with $\ve_u^n$ in factor, we invoke the Poincar\'e \eqref{eq:Poincare} and Korn \eqref{eq:Korn} inequalities  to write
\begin{equation}
\label{eq:bddeu}
\begin{split}
c\|\ve_u^n\|_{L^2(\Om)} (\mathcal A+ \|\ve_v^n\|_{L^2(\Om)})& \lesssim   \|\veps(\ve_u^n)\|_{L^2(\Om)} (\mathcal A + \|\ve_v^n\|_{L^2(\Om)})\\
&\le \frac{1}{8}  \|\veps(\ve_u^n)\|^2_{L^2(\Om)}+ 4 (\mathcal A^2+ \|\ve_v^n\|_{L^2(\Om)}^2).
\end{split}
\end{equation}
Finally, the second and third terms are estimated by
$$\frac{1}{8}\|\veps(\ve_u^n)\|_{L^2(\Omega)}^2+2(c\mathcal{K})^2\Delta\,t\|\pdd_{tt}^2\vu\|_{L^2(t_{n-1},t_n; H^{-1}(\Om))}^2
$$
and
$$\frac{1}{4} \left( \|\veps(\ve_u^n)\|^2_{L^2(\Om)} + \|\di\,\ve_u^n\|^2_{L^2(\Om)}\right)+4c^2\left(\Delta\,t\|\di\,\pdd_t \vu\|_{L^2(t_{n-1},t_n; L^2(\Om))}^2+h^2\right).$$

Returning to \eqref{eq:1stbddeu} multiplied by $\Delta \, t$, summing over $n$, using the additional regularity assumption~\eqref{additional_reg} and using that $\ve_u^0=0$, we obtain
\begin{align*}
    \|\ve_u^n\|^2_{L^2(\Om)} & + \sum_{i=1}^n \|\ve_u^i- \ve_u^{i-1}\|^2_{L^2(\Om)} + (\Delta\,t)\sum_{i=1}^n \left( \| \veps (\ve_u^i) \|_{L^2(\Omega)}^2 +  \| \di \, \ve_u^i \|_{L^2(\Omega)}^2\right)\\
& \lesssim   \sum_{i=1}^n \Delta\,t\|\ve_v^i\|^2_{L^2(\Om)} + c_\lambda\sum_{i=1}^n \Delta \,t\|\di\,\ve_u^{i-1}\|^2_{L^2(\Om)}+  h^2 + (\Delta\,t)^2.
\end{align*}
Assuming that $|\lambda_1|$ and $|\lambda_2|$ are sufficiently small to absorb the term multiplied by $c_\lambda$ on the left of the previous estimate, we arrive at
\begin{equation}\label{e:prelim_u}
\begin{split}
    \|\ve_u^n\|^2_{L^2(\Om)} &+ \sum_{i=1}^n \|\ve_u^i- \ve_u^{i-1}\|^2_{L^2(\Om)} + (\Delta\,t)\sum_{i=1}^n \left( \| \veps (\ve_u^i) \|_{L^2(\Omega)}^2 +  \| \di \, \ve_u^i \|_{L^2(\Omega)}^2\right)\\
& \lesssim  \sum_{i=1}^n (\Delta\,t)\|\ve_v^i\|^2_{L^2(\Om)} +  h^2 + (\Delta\,t)^2.
\end{split}
\end{equation}

Obviously, this requires an error estimate for the velocity of the fluid. Taking into account the stability estimates for the fluid derived in Section \ref{subsec:stab.h} such an error estimate will, in turn, require an error estimate for the discrete time derivative of the displacement in $H^{-1}(\Om)^d$. This is the object of the next subsection.

\subsection{Preliminary error estimate for the discrete time derivative of the displacement}
\label{subsec:1srbdd.dtu}

By definition,
$$
\|\ve_u^n- \ve_u^{n-1}\|_{H^{-1}(\Om)} = \sup_{\vf \in H^1_0(\Om)^d} \frac{1}{\|\nabla\,\vf\|_{L^2(\Om)}}\int_\Om (\ve_u^n- \ve_u^{n-1})\cdot \vf
$$
and we write
$$
\int_\Om (\ve_u^n- \ve_u^{n-1})\cdot \vf = \int_\Om (\ve_u^n- \ve_u^{n-1})\cdot (\vf- \Pi_h(\vf)) + \int_\Om (\ve_u^n- \ve_u^{n-1})\cdot  \Pi_h\vf,
$$
where $\Pi_h$ is an interpolation operator from $H^1_0(\Om)^d$ into $\vU_h$ satisfying \eqref{eq:PihC}. Therefore
\begin{equation*}
 \|\ve_u^n- \ve_u^{n-1}\|_{H^{-1}(\Om)} \lesssim  h  \|\ve_u^n- \ve_u^{n-1}\|_{L^2(\Om)}+ \sup_{\vf \in H^1_0(\Om)^d} \frac{1}{\|\nabla\,\vf\|_{L^2(\Om)}}\int_\Om (\ve_u^n- \ve_u^{n-1})\cdot \Pi_h\vf.
\end{equation*}
For the second term, we deduce from the error equality \eqref{eq:error.equ}, the estimates on $R_{i,u}^n$, $i=1,2,3,4$, obtained in Section~\ref{subsec:bddconsist}, and the stability \eqref{eq:PihC} of $\Pi_h$
\begin{align*}
\sup_{\vf \in H^1_0(\Om)^d} &\frac{1}{\|\nabla\,\vf\|_{L^2(\Om)}}\int_\Om (\ve_u^n- \ve_u^{n-1})\cdot \Pi_h\vf \\
& \lesssim (\Delta\, t) \left(\|\veps(\ve_u^n)\|_{L^2(\Om)} +  \|\di\,\ve_u^n\|_{L^2(\Om)}+  \|\ve_v^n\|_{L^2(\Om)} + c_\lambda\|\di\,\ve_u^{n-1}\|_{L^2(\Om)}\right)\\
& + (\Delta\,t)^{\frac{3}{2}}\|\pdd_{tt}^2 \vu\|_{L^2(t_{n-1},t_n; H^{-1}(\Om))} + (\Delta\,t)^{\frac{3}{2}} \|\di\,\pdd_t \vu\|_{L^2(t_{n-1},t_n; L^2(\Om))} + h (\Delta\, t)\\
& + h (\Delta\,t)^{\frac{1}{2}}\|\nabla\,\pdd_t \vu\|_{L^2(t_{n-1},t_n; L^2(\Om))} + h (\Delta\, t)\|\nabla\,\vv\|_{L^{\infty}(0,T; L^2(\Om))}.\\
    \end{align*}
Gathering the above two estimates, using that $\di\,\ve_u^{0}=0$,  and in view of the additional regularity assumption \eqref{additional_reg}, we get
\begin{equation*}
\begin{split}
\sum_{i=1}^n (\Delta \, t)^{-1} &\|\ve_u^i- \ve_u^{i-1}\|^2_{H^{-1}(\Om)} \lesssim  \sum_{i=1}^n(\Delta\, t) \|\ve_v^i\|^2_{L^2(\Om)} \\
& + h^2 (\Delta \, t)^{-1} \sum_{i=1}^n\|\ve_u^i- \ve_u^{i-1}\|_{L^2(\Om)}^2+ \sum_{i=1}^n(\Delta\, t) \left(\|\veps(\ve_u^i)\|^2_{L^2(\Om)}+ \|\di\,\ve_u^i\|^2_{L^2(\Om)}\right) \\
&  +  (\Delta\,t)^{2} + h^2.
\end{split}
    \end{equation*}
    It remains to take advantage of the assumption $h^2 \lesssim (\Delta \, t)$ and the previously derived estimate \eqref{e:prelim_u} on the displacement to deduce an estimate for the disctete time derivative
\begin{equation}
\label{e:estim_time_der}
\sum_{i=1}^n (\Delta \, t)^{-1} \|\ve_u^i- \ve_u^{i-1}\|^2_{H^{-1}(\Om)} \lesssim  \sum_{i=1}^n(\Delta\, t) \|\ve_v^i\|^2_{L^2(\Om)}  +  (\Delta\,t)^{2} + h^2.
    \end{equation}
    
\subsection{A priori estimate for the fluid's velocity}
\label{subsec:err.bddvel.fl}

We start from relation \eqref{eq:erroreqv} with $\vw_h = \ve_v^n$ and take advantage of the estimates for the consistency terms $R_{1,v}^n$, $R_{2,v}^n$ derived in Section~\ref{subsec:bddconsist}. We also apply Young's inequality and use the additional regularity assumption \eqref{additional_reg} to estimate $\|\nabla\,p\|_{L^{\infty}(0,T;L^2(\Omega))}$, $\|\nabla\,\vu\|_{L^{\infty}(0,T;L^2(\Omega))}$ as well as $\|\nabla^2\,\vu\|_{L^{\infty}(0,T;L^2(\Omega))}$ and obtain
\begin{align*}
&(\Delta\,t)^{-1} \big( \|\ve_v^n\|^2_{L^2(\Om)} - \|\ve_v^{n-1}\|^2_{L^2(\Om)} + \|\ve_v^n - \ve_v^{n-1}\|^2_{L^2(\Om)}\big) +  \|\nabla\,\ve_v^n\|^2_{L^2(\Om)} +  \|\ve_v^n\|^2_{L^2(\Om)}\\
& \le \frac{1}{2} \|\ve_v^n\|^2_{L^2(\Om)} + \frac 1 2 \|\nabla\,\ve_v^n\|^2_{L^2(\Om)} + (\Delta \,t)^{-2}\|\ve_u^n - \ve_u^{n-1}\|^2_{H^{-1}(\Om)}
 \\
& + (\Delta \,t)\| \pdd_{tt}^2 \vv\|^2_{L^2(t_{n-1},t_n; H^{-1}(\Om))} +(\Delta \,t) 
\|\pdd_{tt}^2 \vu\|^2_{L^2(t_{n-1},t_n; H^{-1}(\Om))}\\
&+ h^2 (\Delta\, t)^{-1}\|\nabla\, \pdd_t\vv\|^2_{L^2(t_{n-1},t_n; L^2(\Om))} + h^2 (\Delta\, t)^{-1} \|\nabla\, \pdd_t\vu\|^2_{L^2(t_{n-1},t_n; L^2(\Om))} + h^2. 
\end{align*}
Multiplying by $\Delta \,t$, summing over $n$, and recalling the estimate \eqref{e:estim_time_der} for the discrete time derivatives gives
\begin{equation}\label{e:derive_temp}
\begin{split}
    \|\ve_v^n\|^2_{L^2(\Om)} &+ \sum_{i=1}^n \|\ve_v^i - \ve_v^{i-1}\|^2_{L^2(\Om)}+ \sum_{i=1}^n (\Delta \,t) \left( \|\ve_v^i\|^2_{L^2(\Om)} +\|\nabla\,\ve_v^i\|^2_{L^2(\Om)}\right) \\
& \lesssim  \sum_{i=1}^n (\Delta \,t) \|\ve_v^i\|^2_{L^2(\Om)}  + h^2 + (\Delta \,t)^2.
\end{split}
\end{equation}

In order to complete the estimate for the fluid velocity, we apply Gronwall's lemma.
In this aim, we write
$$
\sum_{i=1}^n \Delta \,t \|\ve_v^i\|^2_{L^2(\Om)} \lesssim \Delta \,t \|\ve_v^n - \ve_v^{n-1}\|^2_{L^2(\Om)} + \sum_{i=1}^{n-1} \Delta \,t \|\ve_v^i\|^2_{L^2(\Om)}.
$$
Since $\Delta \,t$ is assumed to be sufficiently small, returning to \eqref{e:derive_temp} yields
\begin{equation}\label{e:prelim_v}
\begin{split}
    \|\ve_v^n\|^2_{L^2(\Om)} &+ \sum_{i=1}^{n-1} \|\ve_v^i - \ve_v^{i-1}\|^2_{L^2(\Om)}+ \sum_{i=1}^n (\Delta \,t) \left( \|\ve_v^i\|^2_{L^2(\Om)} +\|\nabla\,\ve_v^i\|^2_{L^2(\Om)}\right) \\
& \lesssim  \sum_{i=1}^{n-1} (\Delta \,t) \|\ve_v^i\|^2_{L^2(\Om)}  + h^2 + (\Delta \,t)^2
\end{split}
\end{equation}
and the error for the velocity \eqref{eq:3errvfl} now follows from an application of  Gronwall's lemma.
Moreover, substituting this upper bound into \eqref{e:prelim_u}, we immediately deduce the error estimate \eqref{eq:2err.disp} for the displacement.

\subsection{Consistency relation for the average fluid's pressure}
\label{subsec:err.press}

An error estimate for the discrete pressure $p_h$ is deduced from that of the average pressure $P_h$; see \eqref{e:Phn}. Let us first derive an error equality for the average pressure $P(t):=\int_0^t p(s)ds$. At any time $t_n$, we have
\begin{align*}
\int_\Om P(t_n) \di\,\vw_h &= \varrho \int_\Om \big(\vv(t_n) - \vv_{0,f}\big) \cdot \vw_h + \int_0^{t_n}\int_\Om \big(\nu  \nabla\,\vv(t) : \nabla\,\vw_h + \alpha \vv(t) \cdot \vw_h)\\
& -  \alpha \int_\Om \big( \vu(t_n) - \vu_{0,s}\big)\cdot \vw_h .
\end{align*}
This can be written
\begin{align*}
\int_\Om P(t_n) \di\,\vw_h &= \varrho \int_\Om \big(\vv(t_n) - \vv_{0,f}\big) \cdot \vw_h + \sum_{i=1} ^n \Delta\, t\int_\Om \big(\nu  \nabla\,\vv(t_i) : \nabla\,\vw_h + \alpha \vv(t_i) \cdot \vw_h)\\
& - \alpha \int_\Om \big( \vu(t_n) - \vu_{0,s}\big)\cdot \vw_h +R_{1,P}^n(\vw_h) ,
\end{align*}
with the first consistency term
\begin{equation}
    \label{eq:r1P}
    \begin{split}
 R_{1,P}^n(\vw_h) &:= \int_0^{t_n}\int_\Om \big(\nu  \nabla\,\vv(t) : \nabla\,\vw_h + \alpha \vv(t) \cdot \vw_h)\\
 & - \sum_{i=1} ^n\Delta \,t\int_\Om \big(\nu  \nabla\,\vv(t_i) : \nabla\,\vw_h + \alpha \vv(t_i) \cdot \vw_h\big).
 \end{split}
\end{equation}
The consistency equality is obtained by inserting the interpolants $\Pi_h$ (with the same notation) on $\vX_h$, $Q_h$ and on $\vU_h$, as follows:
\begin{equation}
    \label{eq:copnsist.P}
    \begin{split}
\int_\Om \Pi_hP(t_n) &\di\,\vw_h = \varrho \int_\Om \Pi_h(\vv(t_n) - \vv_{0,f}) \cdot \vw_h\\
&+ \sum_{i=1} ^n \Delta\,t\int_\Om \big(\nu  \nabla\,\Pi_h\vv(t_i) : \nabla\,\vw_h + \alpha \Pi_h\vv(t_i) \cdot \vw_h)\\
& - \alpha \int_\Om \Pi_h( \vu(t_n) - \vu_{0,s})\cdot \vw_h +R_{1,P}^n(\vw_h) + R_{2,P}^n(\vw_h), 
\end{split}
\end{equation}
with the second consistency term
\begin{equation}
    \label{eq:R2.P}
    \begin{split}
&R_{2,P}^n(\vw_h) := \int_\Om \big(\Pi_hP(t_n) - P(t_n)\big) \di\,\vw_h +  \varrho \int_\Om \big( \vv(t_n) - \vv_{0,f} - \Pi_h(\vv(t_n) - \vv_{0,f})\big) \cdot \vw_h \\
& + \sum_{i=1} ^n \Delta \, t\int_\Om \big(\nu  \nabla\big(\vv(t_i) - \Pi_h\vv(t_i)\big) : \nabla\,\vw_h + \alpha \big(\vv(t_i)-\Pi_h\vv(t_i)\big) \cdot \vw_h\big)\\
& - \alpha \int_\Om\big( \vu(t_n) - \vu_{0,s} - \Pi_h( \vu(t_n) - \vu_{0,s})\big)\cdot \vw_h. 
\end{split}
\end{equation}

Let us introduce the error for the average pressure $e_P^n := \Pi_hP(t_n) - P_h^n$.
Subtracting \eqref{eq:Phn} from \eqref{eq:copnsist.P}, we obtain the following instrumental relation for the average pressure
\begin{equation}
     \label{eq:errorPhn}
     \begin{split}
  \int_\Om e_P^n\di\,\vw_h = 
   &\varrho\int_\Om \ve_v^n\cdot \vw_h
 +    \sum_{i=1}^n\Delta\,t\int_\Om\big( \nu  \nabla\, \ve_v^i : \nabla\,
\vw_h  + \alpha  \ve_v^i \cdot \vw_h \big)
- \alpha\int_\Om \ve_u^n\cdot\vw_h\\
& +R_{1,P}^n(\vw_h) + R_{2,P}^n(\vw_h).
\end{split}
\end{equation}

\subsection{Upper bounds for the average pressure consistency terms}
\label{subsec:bddcons.press}
The first consistency term can be written in terms of the average operator $m_i(\cdot)$, defined in \eqref{eq:averag} and applied component-wise, as
\begin{equation}
    \label{eq:r1Pbis}
 R_{1,P}^n(\vw_h) = \sum_{i=1} ^n\Delta \,t\int_{\Om}\big(\nu\nabla(m_i(\vv) - \vv(t_i)):\nabla\,\vw_h + \alpha( m_i(\vv) - \vv(t_i))\cdot \vw_h\big).
\end{equation}
According to \eqref{eq:mif-f}, we have
$$
m_i(\vv) - \vv(t_i) = - \frac{1}{\Delta\,t} \int_{t_{i-1}}^{t_i} \int_{t}^{t_i} \pdd_t \vv(s)\, \nd s\, \nd t
= - \frac{1}{\Delta\,t} \int_{t_{i-1}}^{t_i} (s- t_{i-1}) \pdd_t \vv(s) \nd s
$$
and thus
$$
|m_i(\vv) - \vv(t_i)| \lesssim (\Delta\,t)^{\frac{1}{2}}\|\pdd_t \vv\|_{L^2(t_{n-1},t_n)}.
$$
This together with the regularity assumption ~\eqref{additional_reg} and Poincar\'e's inequality \eqref{eq:Poincare} lead to the following estimate for the first consistency term
\begin{equation}
    \label{eq:bddr1P}
| R_{1,P}^n(\vw_h)| \lesssim (\Delta \,t) t_n^{\frac{1}{2}}\|\pdd_t \vv\|_{L^2(0,t_n;H^1(\Om))}\|\nabla\,\vw_h\|_{L^2(\Om)}\lesssim (\Delta \,t) t_n^{\frac{1}{2}}\|\nabla\,\vw_h\|_{L^2(\Om)}.
\end{equation}

Regarding the second consistency term, the approximation properties of the interpolation operators $\Pi_h$ readily yield
\begin{align*}
&\frac{1}{\|\nabla\,\vw_h\|_{L^2(\Om)}}|R_{2,P}(\vw_h)| \lesssim h \|\nabla\,P(t_n)\|_{L^2(\Om)} + h\|\nabla\,\vv(t_n)\|_{L^2(\Om)}  + h\|\nabla\,\vv_{0,f}\|_{L^2(\Om)} \\
& + h \sum_{i=1}^n \Delta\,t \big(\|D^2\,\vv(t_i)\|_{L^2(\Om)} + \|\nabla\,\vv(t_i)\|_{L^2(\Om)} \big)  
 + h\|\nabla\,\vu(t_n)\|_{L^2(\Om)}  + h\|\nabla\,\vu_{0,s}\|_{L^2(\Om)}.
\end{align*}
Hence, thanks to the regularity assumption~\eqref{additional_reg}, we obtain
\begin{equation}
    \label{eq:bddr2P}
| R_{2,P}(\vw_h)| \lesssim (1+t_n)h \|\nabla\,\vw_h\|_{L^2(\Om)}.
\end{equation}

\subsection{Error bound for the pressure}
\label{subsec:err.aver.press}

We first note that  the inf-sup condition \eqref{e:discrete_infsup} guarantees the existence of $\vw_h^n \in \vX_h$ such that
$$
\| e_P^n \|_{L^2(\Omega)}^2 \lesssim \int_\Omega \di\; (\vw_h^n) e_P^n, \qquad \| \nabla \vw_h^n\|_{L^2(\Om)} \lesssim \| e_P^n\|_{L^2(\Omega)}.
$$
This $\vw_h^n$ in the error relation \eqref{eq:errorPhn} gives
\begin{equation}
    \label{eq:errePn}
\begin{split}
 \|e_P^n\|_{{L^2(\Om)}}  \lesssim &\|\ve_v^n\|_{L^2(\Om)} + \Big( \big(\sum_{i=1}^n(\Delta\,t)\|\nabla\, \ve_v^i\|^2_{L^2(\Om)}\big)^{\frac{1}{2}} +\big(\sum_{i=1}^n (\Delta\,t)\|\ve_v^i\|^2_{L^2(\Om)}\big)^{\frac{1}{2}}\Big)\\
& + \|\ve_u^n\|_{L^2(\Om)} + (\Delta \,t) + h. 
\end{split}    
\end{equation}
By substituting~\eqref{e:prelim_v} and~\eqref{e:prelim_u} into \eqref{eq:errePn} we readily infer that, for all $n$, $1 \le n \le N$,
\begin{equation}
    \label{eq:2errePn}
\|e_P^n\|_{{L^2(\Om)}}  \lesssim\big(h^2 + (\Delta \,t)^2\big)^{\frac{1}{2}}\Big( 1+\text{exp}(c t_n)\Big).
\end{equation}
Notice that
$$
\Pi_hP(t_n) = \int_0^{t_n} \Pi_h p(t),
$$
and thus 
$$
e_P^n = \sum_{i=1}^n(e_P^i - e_P^{i-1}) = \sum_{i=1}^n (\Delta\, t)  e_p^i.
$$
Then, proceeding as in the proof of Proposition \ref{pro:bddpres}, we obtain for any function $\varphi$ in $W^{1, 1}_0(0,T;L^2_0(\Om))$,
$$
\big| \int_0^{T} \int_\Om e_p(t) \varphi(t) \big| 
 \le 2 \sum_{i=1}^{N-1} \| e_P^i\|_{{L^2(\Om)}} \|\pdd_t\varphi \|_{L^1(t_{i},t_{i+1}; {L^2(\Om)})} + \sum_{i=1}^{N} \| e_P^i\|_{{L^2(\Om)}} \|\pdd_t\varphi \|_{L^1(t_{i-1},t_i; {L^2(\Om)})}
$$
and the desired error estimate for the pressure \eqref{eq:} follows.

%--------------------------------
\section{Numerical illustration}
\label{sec:num_res}

In this section, we present several numerical experiments for the fully discretized problem \eqref{eq:initial}-\eqref{eq:displachn}-\eqref{eq:fluidhn}. All the tests are done in a two-dimensional domain discretized using a uniform mesh made of square elements. For the finite element spaces, $(\mathbb{Q}_2)^2$ is used for $\vU_h$ (solid's displacement) and the Taylor--Hood element $(\mathbb{Q}_2)^2-\mathbb{Q}_1$ is used for $\vX_h$ and $Q_h$ (fluid's velocity and pressure). The implementation is based on  \emph{deal.ii} \cite{dealii2019design} and Paraview \cite{ayachit2015} is used for the visualization.

\subsection{Manufactured solutions}
\label{subsec:manufactured}

We start by considering tests with a manufactured solution to numerically analyze the decay rates for the different errors. 
The spatial domain is the unit square $\Omega=(0,1)^2$, the final time is $T=1$, and the parameters are set to
$$\alpha=1, \,\, \nu=1, \,\, \varrho=1, \,\, \lambda_1=\lambda_2=2, \,\, E_1=1, \,\, E_2=-0.2.$$
The truncation parameter $\delta$ is chosen to be $0.4$ in order for the truncation not to be active.
We design the numerical experiment in such a way that the solution to the coupled system is given by
\begin{equation} \label{eq:manufac_u}
\vu(t,\vx) = \frac{\theta(t)}{100}\left(\begin{array}{c}
        \cos(2\pi x_1)\sin(2\pi x_2) \\ \sin(2\pi x_1)\cos(2\pi x_2)
\end{array}\right)
\end{equation}
and
\begin{equation} \label{eq:manufac_vp}
\vv(t,\vx) = \theta(t)\left(\begin{array}{c}
        \sin(2\pi x_1)\cos(2\pi x_2) \\ -\cos(2\pi x_1)\sin(2\pi x_2)
\end{array}\right), \quad p(\vx,t)=\theta(t)(60x_1^2x_2-20x_2^3-5),
\end{equation}
for $\vx=(x_1,x_2)\in\overline{\Omega}$, $t\in[0,T]$, and where
\begin{equation}\label{eq:factor_t}
    \theta(t)=\theta_1(t):=\exp(-t) \qquad \text {or} \qquad \theta(t)=\theta_2(t):=\sin(3\pi t).
\end{equation}
We refer to \cite[Section 6.2.1]{GGHR2} for a numerical study in the linear setting $\lambda_1=\lambda_2=0$ and the exact solutions corresponding to $\theta=\theta_1$.
Note that for the above functions to satisfy the coupled problem, we include a forcing term to the solid equation \eqref{eq:eps=T} and the fluid momentum equation \eqref{eq:bal}. Furthermore, non-homogeneous Dirichlet boundary conditions are prescribed for the solid displacement and the fluid velocity on $\partial\Omega$.  

For the time discretization, we use $N$ uniform snapshots in time corresponding to a time-step $\Delta\,t= T/N$. Given a level of refinement $m$, the subdivision of $\Omega$ consists of $2^{2m}$ square elements of diameter $h=\sqrt{2}2^{-m}$. Finally, we focus on the following errors
$$\text{err}_{\star}^{H_0^1}:= \sqrt{\sum_{n=1}^N\Delta\,t\|\nabla\text{err}_{\star}^n\|_{L^2(\Omega)}^2}, \quad \text{err}_{\star}^{L^2}:= \max_{1\le n\le N}\|\text{err}_{\star}^n\|_{L^2(\Omega)} \quad \text{for} \quad \star\in\{u,v\}$$
and 
$$\text{err}_p^{L^2}:=\sqrt{\sum_{n=1}^N\Delta\,t\|\text{err}_p^n\|_{L^2(\Omega)}^2},$$
where $\text{err}_u^n:=\vu(t_n)-\vu_h^n$, $\text{err}_v^n:=\vv(t_n)-\vv_h^n$ and $\text{err}_p^n:=p(t_n)-p_h^n$, $n=1,2,\ldots,N$.

Recall that Theorem~\ref{thm:velerr} is derived for finite element spaces satisfying \eqref{e:ass_error} (for ease of presentation). 
However, the finite element discretizations used in the numerical illustrations are expected to  yield
$$
\text{err}_{u}^{H_0^1}+\text{err}_{v}^{H_0^1} + \text{err}_p^{L^2} \in \mathcal O(h^2+\Delta t).
$$
We observe that the spatial error is dominant in the case $\theta=\theta_1$, while the temporal error dominates when $\theta=\theta_2$. In both cases, the error behaves as described above. The results for selected values of $N$ and $m$ are presented in Tables~\ref{tab:manufac_theta1} and~\ref{tab:manufac_theta2} for $\theta=\theta_1$ and $\theta=\theta_2$, respectively.
In Table~\ref{tab:errors_rate}, we report the convergence rates for $\text{err}_u^{H_0^1}$, $\text{err}_v^{H_0^1}$ and $\text{err}_p^{L^2}$ when $\Delta\,t= 12.8~h^2$ and observe that they all belong to  $\mathcal O(h^2)$ as expected.

\begin{table}[htbp]
    \centering
    \begin{tabular}{|c|c|c|c|c|c|c|}
    \hline
    $m$ & $N$ & $\text{err}_u^{H_0^1}$ & $\text{err}_u^{L^2}$ & $\text{err}_v^{H_0^1}$ & $\text{err}_v^{L^2}$ & $\text{err}_p^{L^2}$ \\
\hline
    
\hline
4 & 10 & 3.8349$\cdot 10^{-4}$ & 4.3634$\cdot 10^{-5}$ & 2.2632$\cdot 10^{-2}$ & 3.6360$\cdot 10^{-4}$ & 9.0339$\cdot 10^{-3}$ \\
4 & 40 & 2.4809$\cdot 10^{-4}$ & 1.1592$\cdot 10^{-5}$ & 2.3446$\cdot 10^{-2}$ & 3.3820$\cdot 10^{-4}$ & 9.2753$\cdot 10^{-3}$ \\
4 & 80 & 2.3975$\cdot 10^{-4}$ & 6.6330$\cdot 10^{-6}$ & 2.3590$\cdot 10^{-2}$ & 3.4215$\cdot 10^{-4}$ & 9.3277$\cdot 10^{-3}$ \\
\hline
5 & 10 & 3.1494$\cdot 10^{-4}$ & 4.3485$\cdot 10^{-5}$ & 5.9192$\cdot 10^{-3}$ & 2.1187$\cdot 10^{-4}$ & 2.6269$\cdot 10^{-3}$ \\
5 & 40 & 9.9120$\cdot 10^{-5}$ & 1.1055$\cdot 10^{-5}$ & 5.8790$\cdot 10^{-3}$ & 6.7737$\cdot 10^{-5}$ & 2.3428$\cdot 10^{-3}$ \\
5 & 80 & 7.1384$\cdot 10^{-5}$ & 5.6575$\cdot 10^{-6}$ & 5.9028$\cdot 10^{-3}$ & 4.9307$\cdot 10^{-5}$ & 2.3372$\cdot 10^{-3}$ \\
\hline
6 & 10 & 3.1017$\cdot 10^{-4}$ & 4.3481$\cdot 10^{-5}$ & 2.2807$\cdot 10^{-3}$ & 2.0976$\cdot 10^{-4}$ & 1.4950$\cdot 10^{-3}$ \\
6 & 40 & 8.1244$\cdot 10^{-5}$ & 1.1044$\cdot 10^{-5}$ & 1.5342$\cdot 10^{-3}$ & 5.6185$\cdot 10^{-5}$ & 6.7734$\cdot 10^{-4}$ \\
6 & 80 & 4.2785$\cdot 10^{-5}$ & 5.6385$\cdot 10^{-6}$ & 1.4922$\cdot 10^{-3}$ & 2.8774$\cdot 10^{-5}$ & 6.0866$\cdot 10^{-4}$ \\
\hline
\end{tabular}
    \caption{Errors for the manufactured solution with $\theta=\theta_1$ in the case $\lambda_1=\lambda_2=2$.}
    \label{tab:manufac_theta1}
\end{table}

\begin{table}[htbp]
    \centering
    \begin{tabular}{|c|c|c|c|c|c|c|}
    \hline
    $m$ & $N$ & $\text{err}_u^{H_0^1}$ & $\text{err}_u^{L^2}$ & $\text{err}_v^{H_0^1}$ & $\text{err}_v^{L^2}$ & $\text{err}_p^{L^2}$ \\
\hline

\hline
4 & 20 & 1.9051$\cdot 10^{-3}$ & 2.4200$\cdot 10^{-4}$ & 8.9573$\cdot 10^{-2}$ & 1.0454$\cdot 10^{-2}$ & 6.7303$\cdot 10^{-2}$ \\
4 & 80 & 5.4559$\cdot 10^{-4}$ & 6.4431$\cdot 10^{-5}$ & 3.3581$\cdot 10^{-2}$ & 2.7066$\cdot 10^{-3}$ & 1.9610$\cdot 10^{-2}$ \\
4 & 160 & 3.5286$\cdot 10^{-4}$ & 3.2828$\cdot 10^{-5}$ & 2.7770$\cdot 10^{-2}$ & 1.4023$\cdot 10^{-3}$ & 1.3124$\cdot 10^{-2}$ \\
\hline
5 & 20 & 1.8875$\cdot 10^{-3}$ & 2.4169$\cdot 10^{-4}$ & 8.6103$\cdot 10^{-2}$ & 1.0423$\cdot 10^{-2}$ & 6.6642$\cdot 10^{-2}$ \\
5 & 80 & 4.8442$\cdot 10^{-4}$ & 6.4032$\cdot 10^{-5}$ & 2.2736$\cdot 10^{-2}$ & 2.6595$\cdot 10^{-3}$ & 1.7058$\cdot 10^{-2}$ \\
5 & 160 & 2.4960$\cdot 10^{-4}$ & 3.2359$\cdot 10^{-5}$ & 1.2661$\cdot 10^{-2}$ & 1.3342$\cdot 10^{-3}$ & 8.8157$\cdot 10^{-3}$ \\
\hline
6 & 20 & 1.8864$\cdot 10^{-3}$ & 2.4167$\cdot 10^{-4}$ & 8.5881$\cdot 10^{-2}$ & 1.0421$\cdot 10^{-2}$ & 6.6600$\cdot 10^{-2}$ \\
6 & 80 & 4.8035$\cdot 10^{-4}$ & 6.4010$\cdot 10^{-5}$ & 2.1880$\cdot 10^{-2}$ & 2.6575$\cdot 10^{-3}$ & 1.6886$\cdot 10^{-2}$ \\
6 & 160 & 2.4170$\cdot 10^{-4}$ & 3.2336$\cdot 10^{-5}$ & 1.1051$\cdot 10^{-2}$ & 1.3319$\cdot 10^{-3}$ & 8.4743$\cdot 10^{-3}$ \\
\hline
    \end{tabular}
    \caption{Errors for the manufactured solution with $\theta=\theta_2$ in the case $\lambda_1=\lambda_2=2$.}
    \label{tab:manufac_theta2}
\end{table}

\begin{table}[htbp]
    \centering
    \begin{tabular}{|c|c|c|c|c|c|c|c|}
    \cline{3-8}
    \multicolumn{2}{c}{ } & \multicolumn{6}{|c|}{$\theta=\theta_1$} \\
    \hline
    $m$ & $N$ & $\text{err}_u^{H_0^1}$ & rate & $\text{err}_v^{H_0^1}$ & rate & $\text{err}_p^{L^2}$ & rate \\
    \hline
    4 & 10 & 3.8349$\cdot 10^{-4}$ & -- & 2.2632$\cdot 10^{-2}$ & -- & 9.0339$\cdot 10^{-3}$ & -- \\ 
    5 & 40 & 9.9120$\cdot 10^{-5}$ & 1.952 & 5.8790$\cdot 10^{-3}$ & 1.945 & 2.3428$\cdot 10^{-3}$ & 1.947 \\
    6 & 160 & 2.4992$\cdot 10^{-5}$ & 1.988 & 1.4837$\cdot 10^{-3}$ & 1.986 & 5.9101$\cdot 10^{-4}$ & 1.987 \\
    \hline
    \end{tabular} 

\vspace*{0.4cm}

    \begin{tabular}{|c|c|c|c|c|c|c|c|}
    \cline{3-8}
    \multicolumn{2}{c}{ } & \multicolumn{6}{|c|}{$\theta=\theta_2$} \\
    \hline
    $m$ & $N$ & $\text{err}_u^{H_0^1}$ & rate & $\text{err}_v^{H_0^1}$ & rate & $\text{err}_p^{L^2}$ & rate \\
    \hline
    4 & 10 & 3.7705$\cdot 10^{-3}$ & -- & 1.6862$\cdot 10^{-1}$ & -- & 1.3004$\cdot 10^{-1}$ & -- \\    
    5 & 40 & 9.5486$\cdot 10^{-4}$ & 1.981 & 4.3897$\cdot 10^{-2}$ & 1.942 & 3.3710$\cdot 10^{-2}$ & 1.948 \\
    6 & 160 & 2.4170$\cdot 10^{-4}$ & 1.982 & 1.1051$\cdot 10^{-2}$ & 1.990 & 8.4743$\cdot 10^{-3}$ & 1.992 \\
    \hline
    \end{tabular} 
    \caption{Rate of convergence for the solid displacement and fluid velocity errors in $L^2(0,T;H_0^1(\Omega)^2)$ and the pressure error $L^2(0,T;L^2(\Omega))$ in the case $\lambda_1=\lambda_2=2$ for both cases $\theta=\theta_1$ and $\theta=\theta_2$. The time-step is fixed to $\Delta\,t=12.8~h^2$.}
    \label{tab:errors_rate}
\end{table}

\subsection{Practical problem}
\label{subsec:physic}

In this section, we consider a variation of the problem analyzed numerically  in \cite[Section 6.2.1]{GGHR2} for $\lambda_1=\lambda_2=0$; see also \cite[Section 6.1]{GGHR1} for a similar test case. The computational domain is $\Omega=(0,2)\times(0,1)$ with boundary $\Gamma_1\cup\Gamma_2\cup\Gamma_3\cup\Gamma_4$ labeled counter-clockwise starting with the bottom part $\{(x_1,0) \ : \ x_1\in [0,2] \}$. The parameters are set to
$$T=1, \,\, \alpha=1, \,\, \nu=0.1, \,\, \varrho = 1 \,, E_1=\frac{1}{8}, \,\, E_2=-\frac{3}{64}$$
and the truncation parameter is large enough so that the truncation does not occur for all choices of $\lambda_1$, $\lambda_2$. We note that the restrictions on $\lambda_1$ and $\lambda_2$ in Section~\ref{sec:trunc} are not necessarily satisfied. However, during computation, we verify that the constants $B_1$ and $B_2$ in \eqref{eqn:B1B2} are positive.
Observe that the values of $E_1$ and $E_2$ correspond to $\mu=4$ and $\lambda=12$ for the Lam\'e parameters in \eqref{eqn:Ei_Lame} when $\lambda_1=\lambda_2=0$ (linear case). The boundary conditions for the fluid velocity are
\begin{equation} \label{eqn:BC_fluid_Example2}
    \vv = \mathbf{0} \quad \text{on } \Gamma_1\cup\Gamma_3 \qquad \text{and} \qquad (2\nu\veps(\vv)-pI)\vn=\mathbf{0} \quad \text{on } \Gamma_2\cup\Gamma_4
\end{equation}
while for the solid, we impose
$$\vu = \mathbf{0} \quad \text{on } \Gamma_2\cup\Gamma_3\cup\Gamma_4 \qquad \text{and} \qquad \vu(x_1,x_2)=(0,-\sin(0.5\pi x_1))^T \quad \text{on } \Gamma_1.$$
Note that due to the boundary conditions \eqref{eqn:BC_fluid_Example2}, the pressure is in $L^2(\Omega)$ not necessarily with vanishing mean value.
The external force $\varrho_s\mathbf{b}_e$ with $\varrho_s=2$ and $\mathbf{b}_e=(0,-1)^T$ is added to the right-hand side of equation \eqref{eq:bal}, while $\varrho\mathbf{b}_e$ is added to the right-hand side of the first equation of \eqref{eq:flow}. The initial conditions are set to $\vu(0)=\vv(0) = \mathbf{0}$.

Regarding the discretization parameters, the domain is partitioned using a uniform mesh made of 8192 squares of diameter $\sqrt{2}2^{-6}$, we use again $(\mathbb{Q}_2)^2-(\mathbb{Q}_2)^2-\mathbb{Q}_1$ for the finite element spaces, and the time-step is set to $\Delta\, t=0.1$.

Figure~\ref{fig:Example2_eps} reports the displacement, the velocity and the pressure at the final time $T=1$ for $\lambda_1=\lambda_2=5$ and we provide in Table~\ref{tab:epsu_linf} the evolution of $\|\veps(\vu_h^n)\|_{L^{\infty}(\Omega)}$ for different values of the model parameters $\lambda_1$ and $\lambda_2$. We observe an increase of more $90\%$ for $\|\veps(\vu_h^N)\|_{L^{\infty}(\Omega)}$ when $\lambda_1=\lambda_2=5$ compared to the linear case $\lambda_1=\lambda_2=0$. Refer to Figure~\ref{fig:Example2_strain} for a comparison of the strain when $\lambda_1=\lambda_2\in\{0,1,2,3,4,5\}$.

\begin{figure}
    \centering
    \includegraphics[trim={7cm 7cm 7cm 7cm},clip,width=0.325\linewidth]{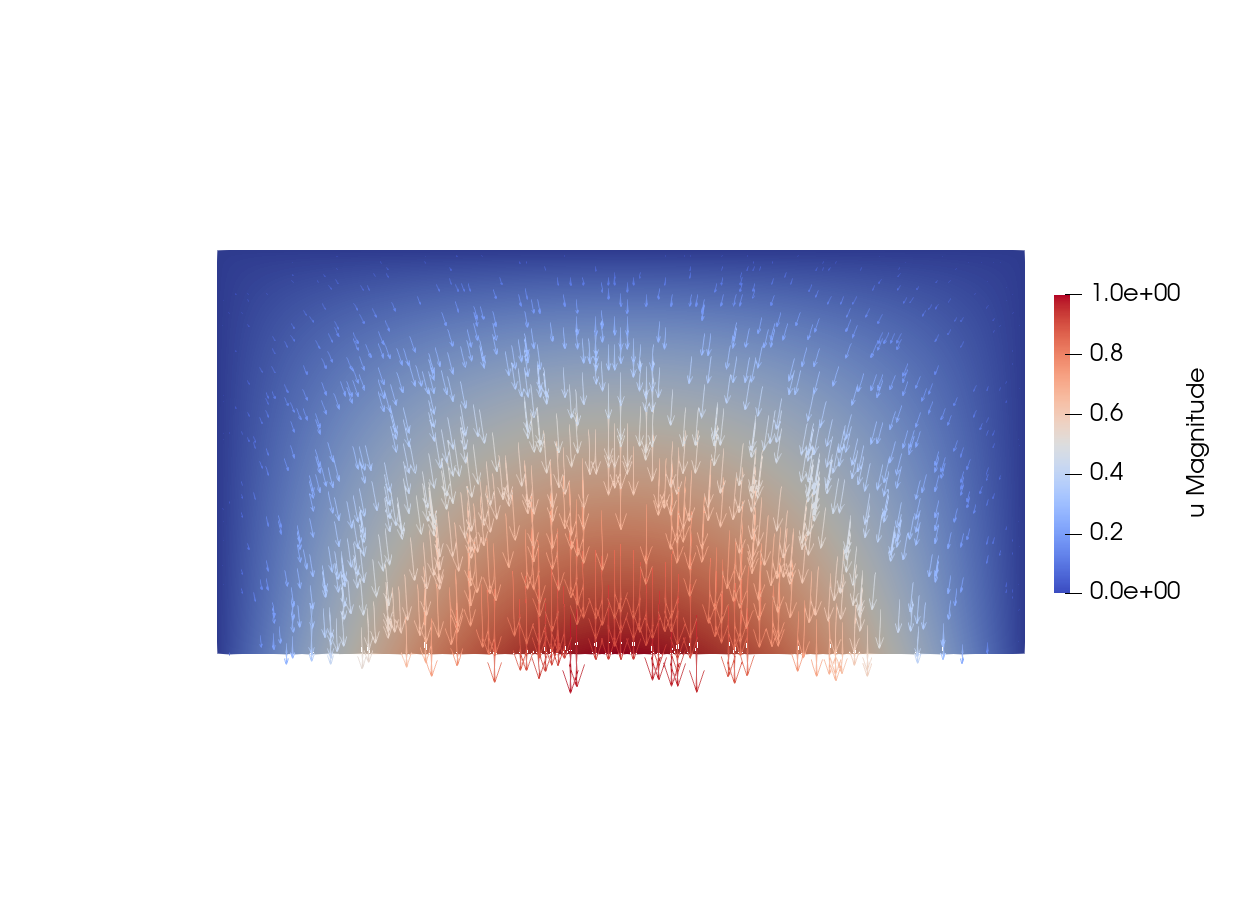} 
    \includegraphics[trim={7cm 7cm 7cm 7cm},clip,width=0.325\linewidth]{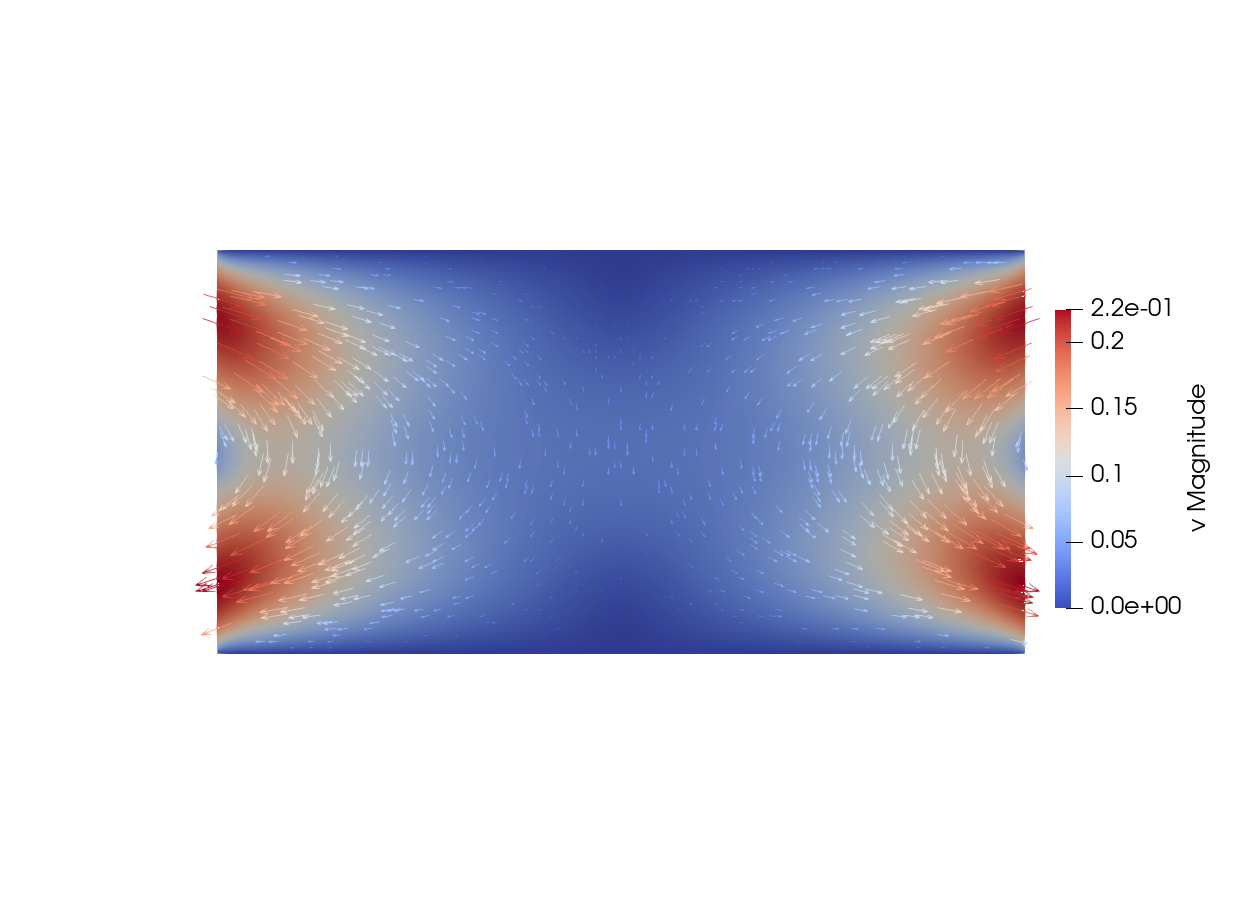} 
    \includegraphics[trim={7cm 7cm 7.2cm 7cm},clip,width=0.325\linewidth]{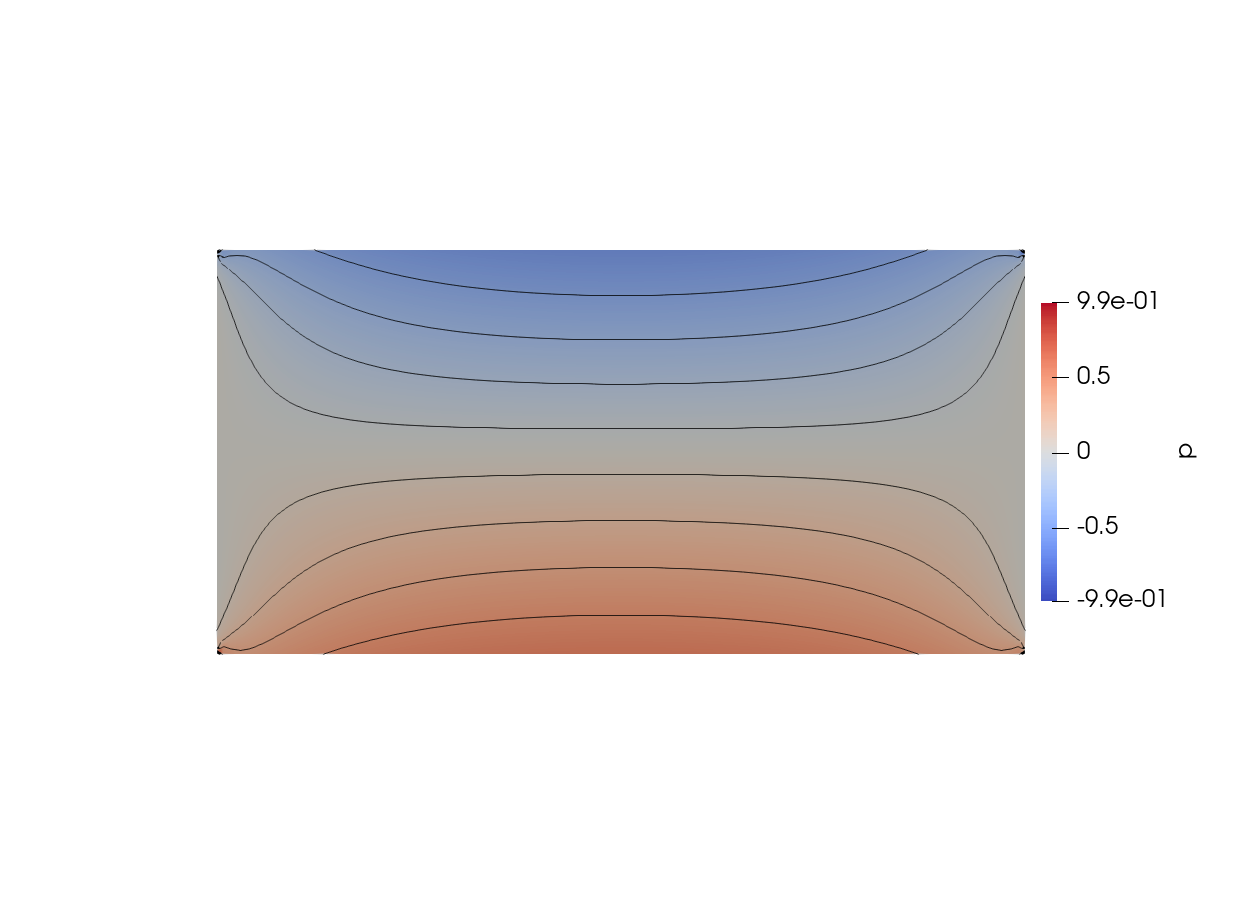}
    \caption{Solution at the final time $T=1$ using $\Delta\, t=0.1$ for the case $\lambda_1=\lambda_2=5$. Left: solid displacement $\vu$ (magnitude and vector field with dark blue=0.0 and dark red=1.0); middle: fluid velocity $\vv$ (magnitude and vector field  with dark blue=0.0 and dark red=0.22); right: fluid pressure $p$ with isolines using 16 values uniformly distributed between $\max_{\overline{\Omega}}p=-0.9868$ (dark blue) and $\max_{\overline{\Omega}}p=0.9945$ (dark red).}
    \label{fig:Example2_eps}
\end{figure}

\begin{table}[htbp]
    \centering
    \begin{tabular}{|c|r|r|r|r|r|r|}
    \cline{2-7}
    \multicolumn{1}{c|}{ } & \multicolumn{6}{|c|}{$\lambda_1=\lambda_2$} \\
    \hline
    $t_n$ & \multicolumn{1}{|c|}{0} & \multicolumn{1}{|c|}{1} & \multicolumn{1}{|c|}{2} & \multicolumn{1}{|c|}{3} & \multicolumn{1}{|c|}{4} & \multicolumn{1}{|c|}{5} \\
    \hline
0.1 & 1.15251 & 1.15251 & 1.15251 & 1.15251 & 1.15251 & 1.15251 \\
0.2 & 1.10503 & 1.09503 & 1.10053 & 1.09282 & 1.07514 & 1.07498 \\
0.4 & 1.10344 & 1.07718 & 1.07858 & 1.10469 & 1.21638 & 1.32958 \\
0.6 & 1.10361 & 1.07726 & 1.07899 & 1.22575 & 1.42431 & 1.63958 \\
0.8 & 1.10371 & 1.07728 & 1.07912 & 1.30193 & 1.58267 & 1.90757 \\
1 & 1.10375 & 1.07728 & 1.08778 & 1.35043 & 1.70443 & 2.14054 \\
\hline
    \end{tabular}
    \caption{Value of $\|\veps(\vu)\|_{L^{\infty}(\Omega)}$ at different times $0\le t_n\le T=1$ and different values of $\lambda_1=\lambda_2$.}
    \label{tab:epsu_linf}
\end{table}

\begin{figure}
    \centering
    \includegraphics[trim={7cm 7cm 7cm 7cm},clip,width=0.325\linewidth]{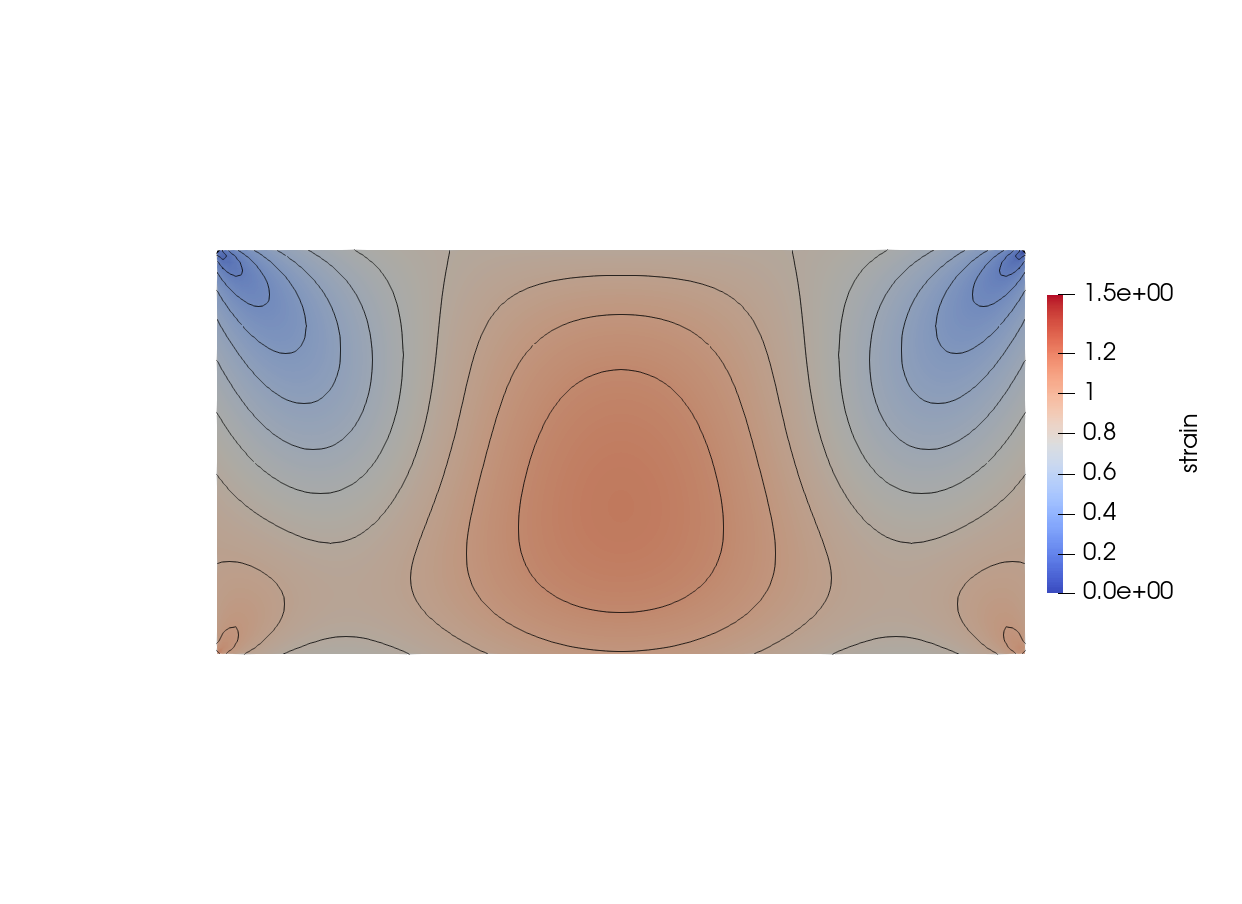}
    \includegraphics[trim={7cm 7cm 7cm 7cm},clip,width=0.325\linewidth]{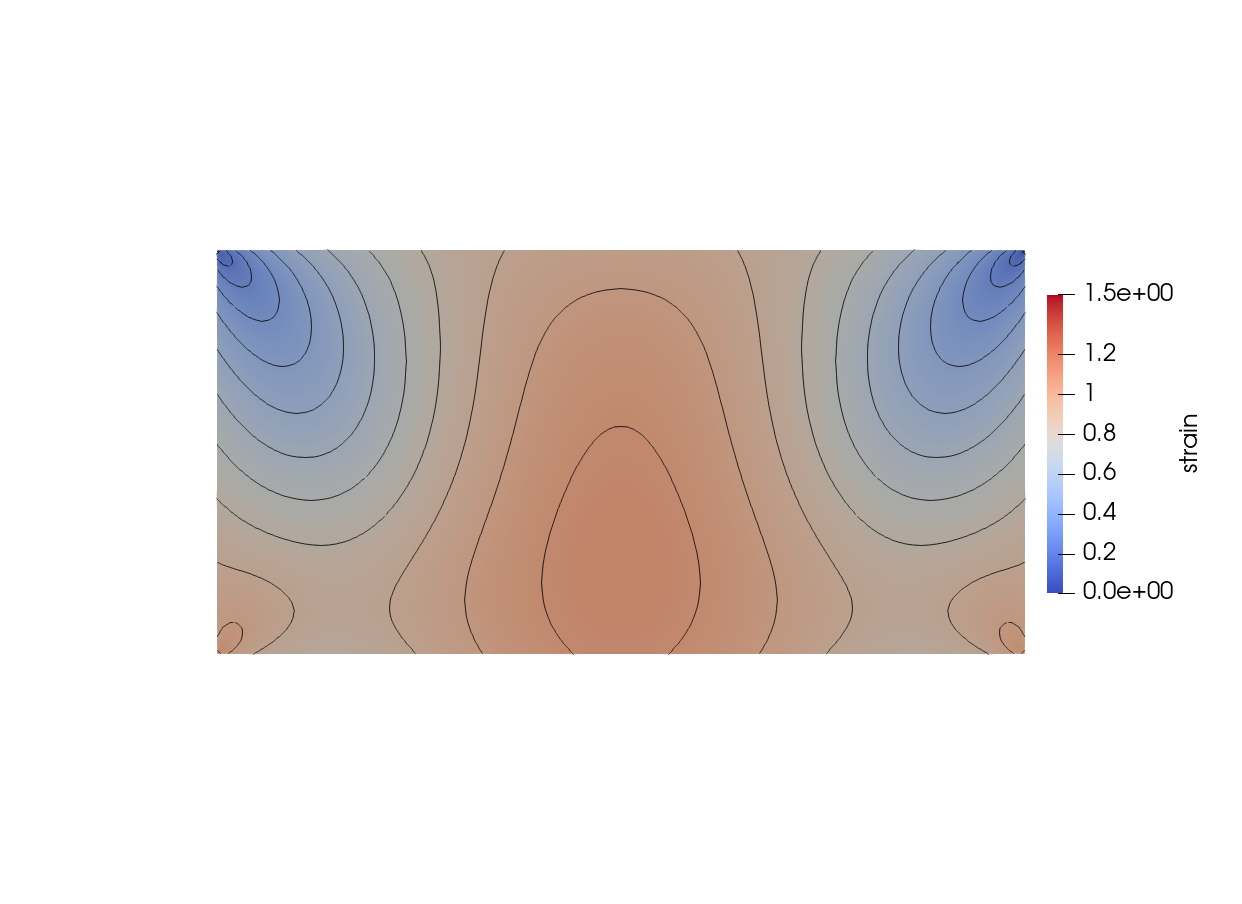}
    \includegraphics[trim={7cm 7cm 7cm 7cm},clip,width=0.325\linewidth]{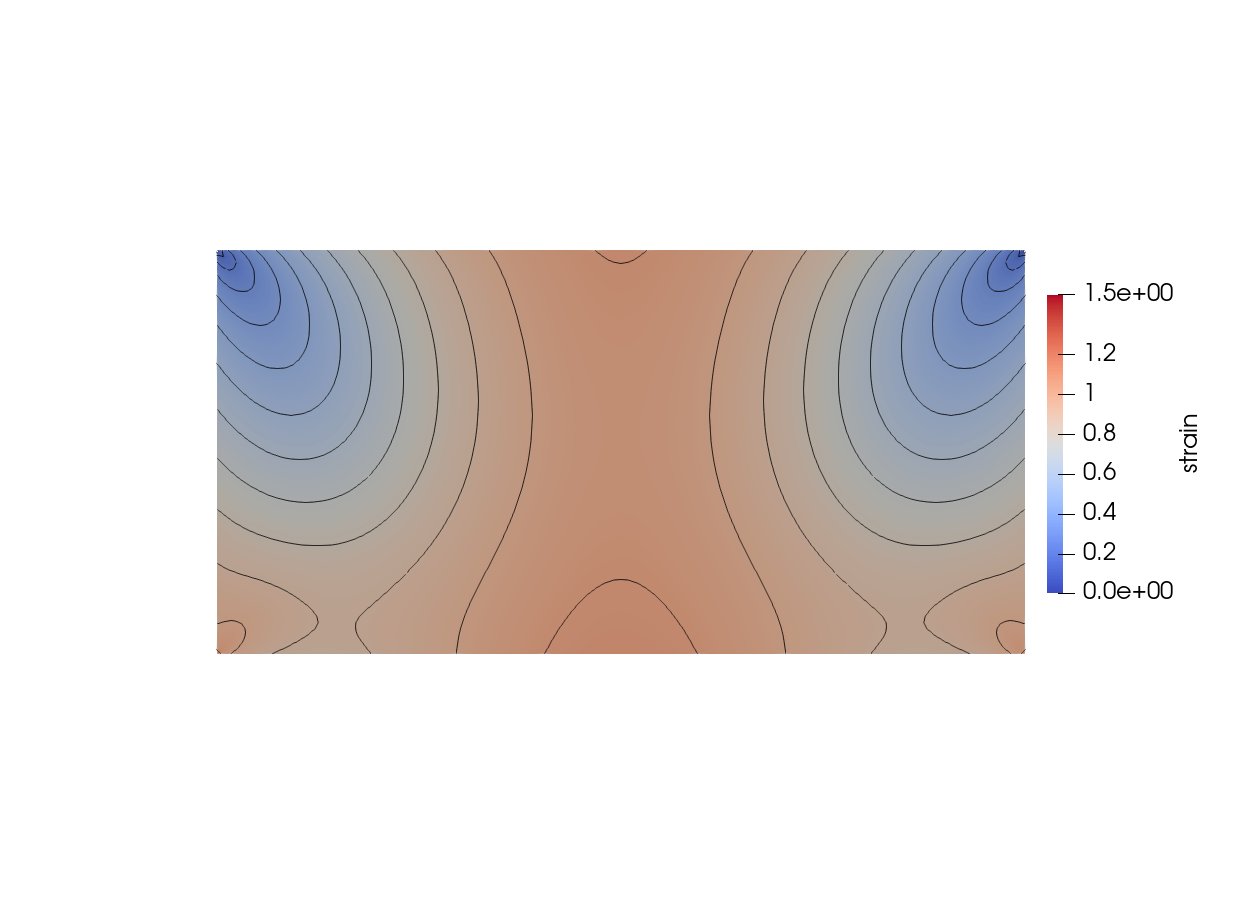} \\    
    \includegraphics[trim={7cm 7cm 7cm 7cm},clip,width=0.325\linewidth]{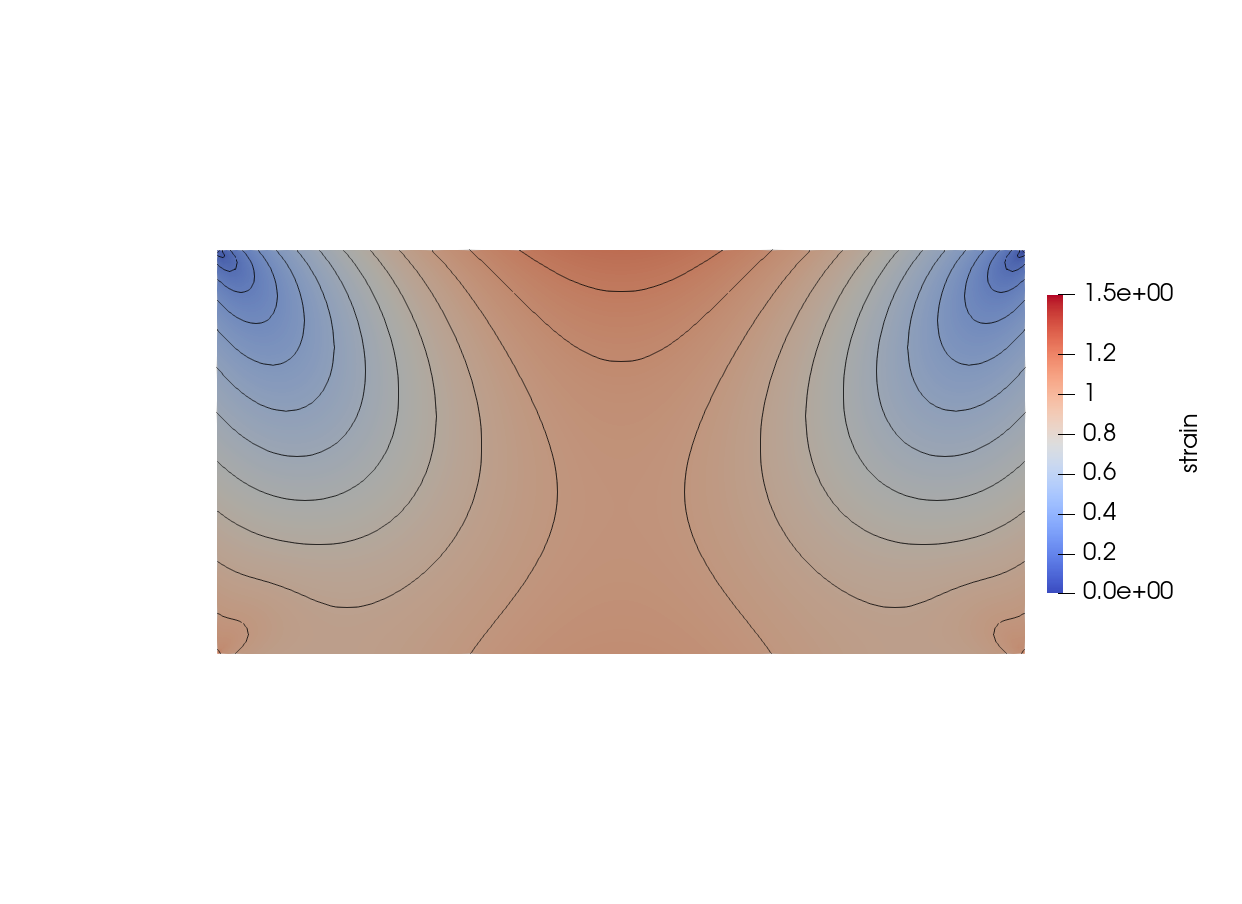}
    \includegraphics[trim={7cm 7cm 7cm 7cm},clip,width=0.325\linewidth]{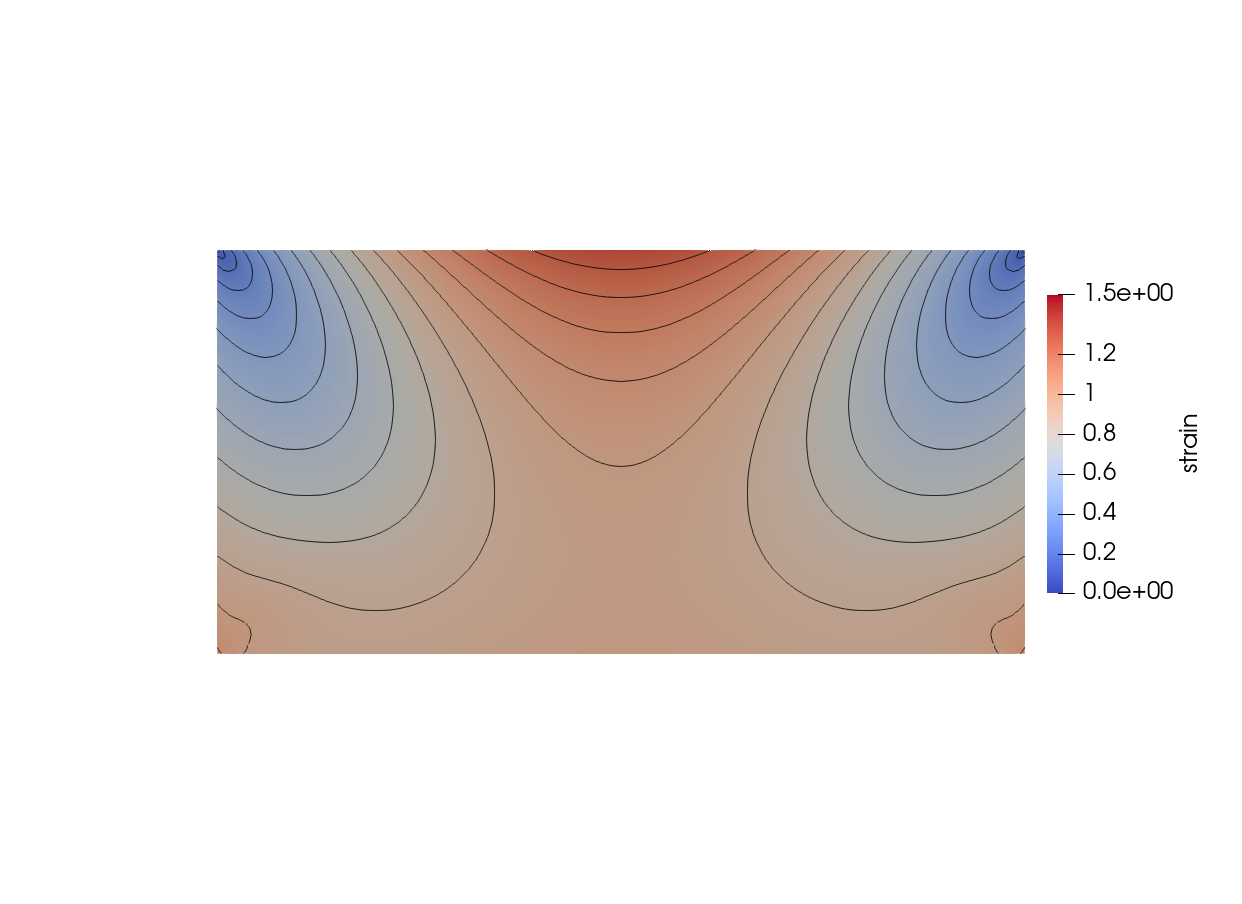}
    \includegraphics[trim={7cm 7cm 7cm 7cm},clip,width=0.325\linewidth]{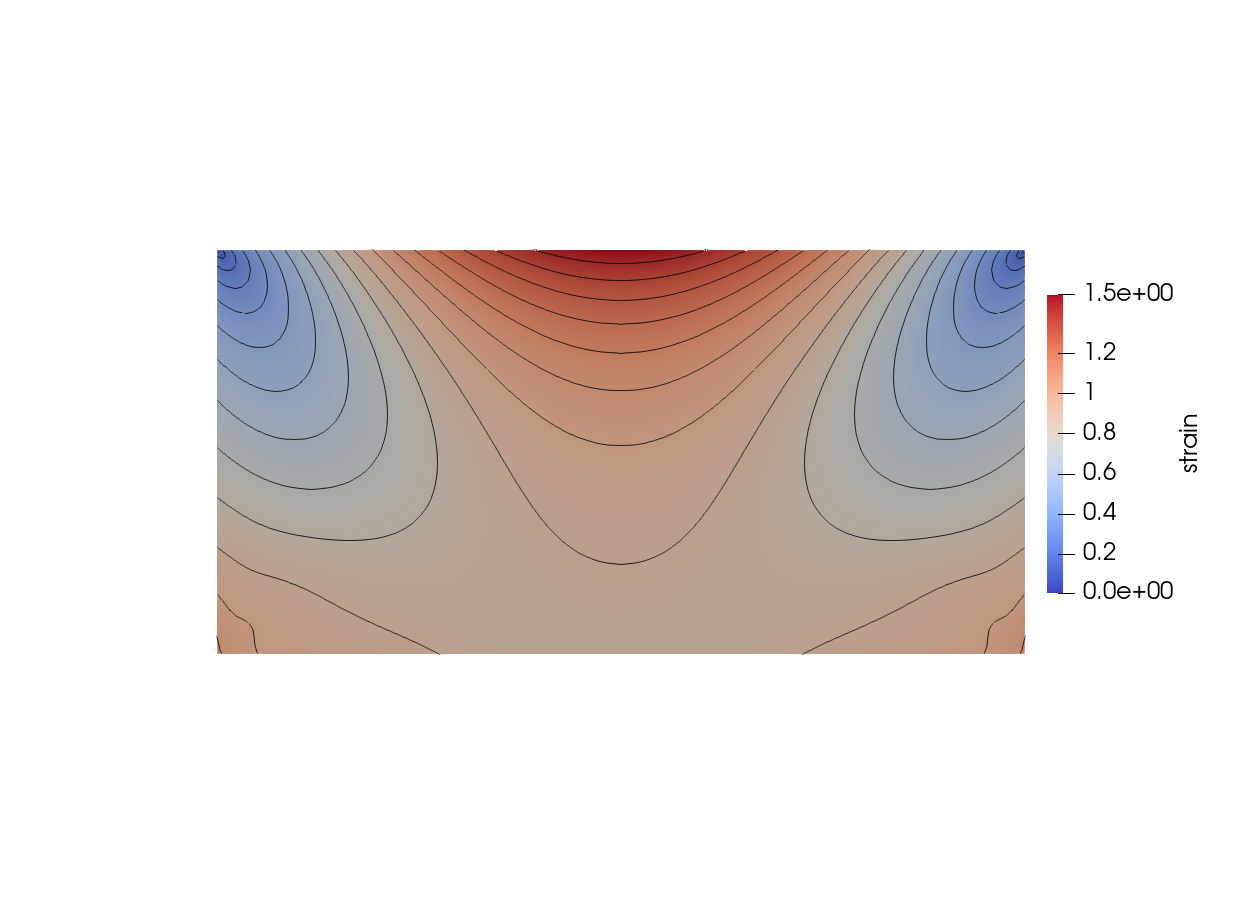}
    \caption{Frobenius norm $|\veps(\vu_h^N)|$ of the strain at the final time $T=1$ when $\lambda_1=\lambda_2=0,1,2,3,4,5$ (from left to right and top to bottom) using 20 values uniformly distributed isolines between $0$ (dark blue) and $1.5$ (dark red).}
    \label{fig:Example2_strain}
\end{figure}

%% Biblio
\bibliographystyle{plain}
\bibliography{bibliography}

\end{document}